\def\N{{{\Bbb N}}}
\def\Z{{{\Bbb Z}}}
\def\T{{{\Bbb T}}}
\def\R{{\Bbb R}}
\def\C{{\Bbb C}}
\def\l{{\lambda }}
\def\a{{\alpha }}
\def\a{{\alpha}}
\def\b{{\beta}}
\def\d{{\delta}}
\def\s{{\sigma}}
\def\vp{{\varphi}}
\def\g{{\gamma }}
\def\){\right)}
\def\({\left(}
\numberwithin{equation}{section}
\newtheorem{corollary}{Corollary}[section]
\newtheorem{lemma}{Lemma}[section]
\newtheorem{theorem}{Theorem}[section]
\newtheorem{proposition}{Proposition}[section]
\newtheorem{remark}{Remark}[section]
\newtheorem{definition}{Definition}[section]
\newtheorem{example}{Example}[section]
\newcommand{\h}{\widehat}
\newcommand{\w}{\widetilde}
\newcommand{\nul}{{\bf0}}
\newcommand{\rd}{{\mathbb R}^d}
\newcommand{\zd}{{\mathbb Z}^d}
\newcommand{\td}{{\mathbb T}^d}
\def\spec{\operatorname{spec}}
\renewcommand{\phi}{\varphi}
\begin{document}

\title[Approximation properties of quasi-interpolation operators]{
Approximation properties of periodic multivariate  quasi-interpolation operators}

\author[Yurii
Kolomoitsev]{Yurii
Kolomoitsev$^{\text{a, 1, *}}$}
\address{Universit\"at zu L\"ubeck,
Institut f\"ur Mathematik,
Ratzeburger Allee 160,
23562 L\"ubeck}
\email{kolomoitsev@math.uni-luebeck.de}

\author[J\"urgen~Prestin]{J\"urgen~Prestin$^\text{a, 2}$}
\address{Universit\"at zu L\"ubeck,
Institut f\"ur Mathematik,
Ratzeburger Allee 160,
23562 L\"ubeck}
\email{prestin@math.uni-luebeck.de}

\thanks{$^\text{a}$Universit\"at zu L\"ubeck,
Institut f\"ur Mathematik,
Ratzeburger Allee 160,
23562 L\"ubeck, Germany}


\thanks{$^1$Supported by DFG project KO 5804/1-1.}

\thanks{$^2$Supported by Volkswagen Foundation.}

\thanks{$^*$Corresponding author}

\thanks{E-mail address: kolomoitsev@math.uni-luebeck.de}

\date{\today}
\subjclass[2010]{42A10, 42A15, 41A35, 41A25, 41A27} \keywords{Quasi-interpolation operators, interpolation, Kantorovich-type operators, best approximation, moduli of smoothness, $K$-functionals, Besov spaces}

\begin{abstract}
We study approximation properties of general multivariate periodic quasi-interpolation operators, which are generated by distributions/functions $\widetilde{\varphi}_j$ and trigonometric polynomials $\varphi_j$. The class of such operators includes classical interpolation polynomials ($\widetilde\varphi_j$ is the Dirac delta function), Kantorovich-type operators ($\widetilde\varphi_j$ is a characteristic function), scaling expansions associated with wavelet constructions, and others.  Under different compatibility conditions on $\widetilde\varphi_j$ and $\varphi_j$, we obtain upper and lower bound estimates for the $L_p$-error of approximation by quasi-interpolation  operators in terms of the best and best one-sided approximation, classical and fractional moduli of smoothness, $K$-functionals, and other terms.
\end{abstract}

\maketitle

\section{Introduction}

Quasi-interpolation operators are among the most important mathematical tools
in many branches of science and engineering. They play a crucial role as a connecting
link between continuous-time and discrete-time signals. For proper application of quasi-interpolation ope\-rators, it is very important to know the quality of approximation of functions by such operators in various settings.
Recall that in the non-periodic case, quasi-interpolation operators, which are also often called quasi-projection operators, can be defined by
\begin{equation}\label{my00}
   \sum_{k\in\Z^d} m^{j} \langle f,\w\vp(M^j\cdot-k)\rangle \vp(M^j\cdot-k),
\end{equation}
where $\phi$ is a function and $\w\phi$ is a distribution or a function, $\langle f,\w\vp(M^j\cdot-k)\rangle$ is an appropriate functional, $M$ is a dilation matrix, and  $m=|\det M|$.
The class of these operators is very large. For example, if $\w\vp$ is the Dirac delta-function, operators~\eqref{my00} represent classical sampling expansions  (see, e.g.,~\cite{Unser, Butz4, Butz5, JZ, KKS, KS20a}); if $\w\vp$ is a characteristic function of a certain bounded set, we obtain the so-called Kantorovich-type operators and their generalization  (see, e.g.,~\cite{BBSV, OT15,CV2, VZ2, KS3, KS20b}); under particular conditions on $\vp$ and $\w\vp$, the class of operators~\eqref{my00} includes scaling expansions associated with wavelet constructions  (see, e.g.,~\cite{Jia2,  BDR, v58, KS0, Sk1}) and other types of operators.

In this paper, we study a periodic counterpart of~\eqref{my00}, which can be defined in the following way
\begin{equation}\label{my01}
Q_j(f,\phi_j,\w\phi_j)  = \frac{1}{m^j} \sum\limits_{k} {\w \phi_j}*f(M^{-j}k) \phi_j (\cdot - M^{-j}k),
\end{equation}
where the sum over $k$ is finite, $\phi_j$ is a trigonometric polynomial, and ${{\w \phi_j}}*f$ is a certain bounded function associated with the distribution/function $\w\vp_j$  (see Section~2 for details).

Similar to the non-periodic case, approximation properties of operators~\eqref{my01}
have also been intensively studied by many mathematicians (see, e.g.,~\cite{H, Unser2002, KKS2, PrXu, SS99b, Sp00, Sz} and the references therein). It turns out that in the
periodic case, such operators have been considered mainly in the form of sampling or interpolating-type
operators (i.e., $\w\vp_j$ is the periodic Delta function) given by
\begin{equation}\label{my02}
I_j(f,\phi_j)  = \frac{1}{m^j} \sum\limits_{k} f(M^{-j}k) \phi_j (\cdot - M^{-j}k),
\end{equation}
where, usually, $\phi_j$ is a so-called fundamental interpolant, e.g., the Dirichlet or de la Vall\'ee-Poussin kernels, or
periodic $B$-splines. At the same time, general periodic quasi-interpolation operators of type~\eqref{my01}
have been studied only in a few works. In particular, the general case of operators~\eqref{my01} with some particular class
of linear functionals instead of $f(M^{-j}k)$ was studied in~\cite{Unser2002} and in the recent paper~\cite{KKS2}.

The estimation of the $L_p$-error of approximation by interpolation operators~\eqref{my02}, in which $\vp_j$ is the Dirichlet kernel was studied in~\cite{H}. A more general case of Hermite-type interpolation was considered in~\cite{PrXu}.
In the above mentioned two papers, the estimates of the  error were given in terms of the best one-sided approximation by trigonometric polynomials and in terms of  the $\tau$-modulus of smoothness of arbitrary integer order.  Approximation properties of  operators~\eqref{my02} for various trigonometric polynomials $\vp_j$ (the so-called methods of summation of the discrete Fourier series) were considered  in~\cite{Sz} and~\cite{SzV}, in which the error estimates were investigated in the uniform norm.
In the papers~\cite{SS99b} and \cite{Sp00}, the introduction of the periodic Strang-Fix conditions as well as their  different  modifications enabled the development of a unified approach to error estimates of periodic interpolation for functions from the Sobolev spaces and other function spaces.
Some estimates of the $L_p$-error of approximation by operators~\eqref{my02} for functions from Nikol'skij-Besov spaces were derived in~\cite{SS99}.

The goal of this paper is to estimate the $L_p$-error of approximation of a given function $f$,  from above and below, by quasi-interpolation operators $Q_j(f,\vp_j,\w\vp_j)$ for a wide range of distributions/functions $\w\vp_j$ and trigonometric polynomials $\vp_j$.
Under different compatibility conditions on $\vp_j$ and $\w\vp_j$ related in some sense to the Strang-Fix conditions, we obtain  estimates for the error of approximation in terms of the best and best one-sided approximation (see the definition in Section~\ref{sec2}), classical and fractional moduli of smoothness, $K$-functionals, and other terms.
We pay a special attention to the case $\w\vp_j\in L_q$, for example, $\w\vp_j$ is a normalized characteristic function, which provides Kantorovich-type operators. In particular, we show that if $\vp_j=\mathscr{D}_{2^{j}}$ is the  Dirichlet kernel and $f\in L_p[-\frac12,\frac12]$, $1<p<\infty$, $\s\in (0,1/2]$, then (see Example~\ref{e4})
\begin{equation}\label{intr1}
  \bigg\Vert f- \sum_{k=-2^{j-1}}^{2^{j-1}-1} \frac1{2\s}\int_{-2^{-j}\s}^{2^{-j}\s} f\(t+{2^{-j}k}\){\rm d}t\, \mathscr{D}_{2^{j}}\(\cdot-{2^{-j}k}\)\bigg\Vert_p\asymp \omega_2(f,2^{-j})_p,
\end{equation}
where $\omega_2(f,2^{-j})_p$ is the classical modulus of smoothness of second order.
At the same time, if $\vp_j(x)=\mathscr{D}_{2^{j},\s}^\chi(x)=\sum_{\ell=-2^{j-1}}^{2^{j-1}-1}\frac{\pi \s 2^{-j+1}\ell}{\sin \pi \s 2^{-j+1}\ell} {\rm e}^{2\pi {\rm i} \ell x}$ and $1<p<\infty$, then (see Example~\ref{e2})
\begin{equation}\label{intr2}
  \bigg\Vert f- \sum_{k=-2^{j-1}}^{2^{j-1}-1} \frac1{2\s}\int_{-2^{-j}\s}^{2^{-j}\s} f\(t+{2^{-j}}k\){\rm d}t\, \mathscr{D}_{2^{j},\s}^\chi \(\cdot-{2^{-j}k}\)\bigg\Vert_p\asymp E_{2^{j}}(f)_p,
\end{equation}
where $E_{2^{j}}(f)_p$ is the $L_p$-error of the best approximation of $f$ by trigonometric polynomials with frequencies in $[-2^{j-1},2^{j-1})$.
In the above relations~\eqref{intr1} and~\eqref{intr2}, the notation $\asymp$ denotes the two-sided inequality with positive constants that do not dependent on~$f$ and~$j$.

The paper is organized as follows: in
Section~2 we introduce basic notations, provide essential facts, and define the quasi-interpolation operator $Q_j(f,\vp_j,\w\vp_j)$.
Section~3 is devoted to auxiliary results. In this section, we obtain general upper estimates of the $L_p$-error for $Q_j(f,\vp_j,\w\vp_j)$ and give auxiliary lemmas.
In Section~4 we prove the main results. In Subsection~4.1, under strong compatibility conditions on $\vp_j$ and $\w\vp_j$, we estimate the $L_p$-error for operators~\eqref{my01} in terms of best approximation by trigonometric polynomials. In Subsection~4.2 we give two-sided estimates of the approximation error $\Vert f-Q_j(f,\vp_j,\w\vp_j)\Vert_p$ in terms of classical and fractional moduli of smoothness and $K$-functionals. In Subsection~4.3 we spe\-cify some error estimates from the previous section for functions $f$ belonging to Besov-type spaces.

\section{Basic notation}\label{sec2}

We use the standard multi-index notations.
    Let $\N$ be the set of positive integers, $\rd$ be the $d$-dimensional Euclidean space,
    $\Z^d$ be the integer lattice  in $\R^d$,
    $\T^d=\R^d\slash\Z^d$ be the $d$-dimensional torus.
    Further, let  $x = (x_1,\dots, x_d)^{\rm T}$ and
    $y =(y_1,\dots, y_d)^{\rm T}$ be column vectors in $\rd$.
    Then $(x, y):=x_1y_1+\dots+x_dy_d$,
    $|x| := \sqrt {(x, x)}$; $\nul=(0,\dots, 0)^T\in \rd$;  		
		$\Z_+^d:=\{x\in\zd:~x_k\geq~{0}, k=1,\dots, d\}.$
		If $\alpha\in\zd_+$,  we set
    $[\alpha]=\sum_{k=1}^d \alpha_k$,
$D^{\alpha}f=\frac{\partial^{[\alpha]} f}{\partial^{\alpha_1}x_1\dots
	\partial^{\alpha_d}x_d}$.

We denote by $c$ and $C$ some positive constants depending on the
indicated parameters.  By these letters we also denote some positive constants that are independent
of the function $f$ and the parameter $j$.

 We  use the notation $L_p$ for the space  $L_p(\td)$ with the usual norm
$$
\|f\|_p = \Big(\int_{\td}|f(x)|^p {\rm d}x\Big)^{1/p}\quad \text{for}\quad  1\le p<\infty
$$
and
$$
\|f\|_\infty={{\rm ess} \sup}_{x\in \T^d} |f(x)| \quad \text{for}\quad  p=\infty.
$$
When $p=\infty$, we replace  $L_\infty$ with $C(\T^d)$.  By $B=B(\T^d)$ we denote the space of all bounded measurable functions on $\T^d$.

If $f\in L_1(\T^d)$, then
$$
\h f(k)=\int_{\T^d} f(x){\rm e}^{-2\pi {\rm i}(k,x)}{\rm d}x,\quad k\in\zd,
$$
denotes the $k$-th Fourier coefficient of $f$.
The Fourier transform of  $f\in L_1(\R^d)$ is defined by $\mathcal{F}(f)(\xi)=\int_{\R^d} f(x)e^{-2\pi i(x,\xi)}{\rm d}x$.

Let ${\mathcal{D}} = C^{\infty}(\T^d)$ be the space of infinitely differentiable functions on $\R^d$ that are periodic with period 1.
The linear space of periodic distributions (continuous linear functionals on ${\mathcal{D}}$) is denoted by ${\mathcal{D}}'$.
It is known (see, e.g.,~\cite[p. 144]{Triebel}) that any periodic distribution ${\w\vp}$ can be expanded in a weakly convergent (in ${\mathcal{D}}'$) Fourier series
\begin{equation}
{\w\vp}(x) = \sum_{k\in\zd} \h{\w\vp}(k) {\rm e}^{2\pi {\rm i} (k, x)},
\label{fPDisrt}
\end{equation}
where the sequence $\{\h{\w\vp}(k)\}_k$ has at most polynomial growth.
Also, conversely, for any sequence $\{\h{\w\vp}(k)\}_k$ of at most polynomial growth the series on the right-hand side of~(\ref{fPDisrt}) converges weakly to a periodic distribution.
The numbers $\h{\w\vp}(k)$ are called the Fourier coefficients of a periodic distribution ${\w\vp}$ and
$\h{\w\vp}(k) = \langle {\rm e}^{-2\pi {\rm i} (k, \cdot)}, {\w\vp}\rangle $.

In what follows, $M={\rm diag} (m_1,m_2,\dots,m_d)$ is a diagonal dilation matrix, $m_j$ is an integer with $|m_j|>1$, $m:=|\det M|$,   $D(M) : = (M [-1/2,1/2)^d) \cap \zd$.

For a given matrix $M$, we will use the following set of trigonometric polynomials:
$$
\mathcal{T}_{M}:=\{T\,:\, \spec T\subset D(M)\}.
$$
The $L_p$-error of the best approximation of $f\in L_p$ by trigonometric polynomials $T\in \mathcal{T}_{M}$ is denoted by
$$
E_{M}(f)_p:=\inf\left\{\Vert f-T\Vert_p\,:\, T\in \mathcal{T}_{M}\right\}.
$$
The $L_p$-error of the best one-sided approximation of $f\in B$ is given by
$$
\w E_{M}(f)_p:=\inf\left\{\Vert t-T\Vert_p\,:\, t,T\in \mathcal{T}_{M}, \quad t(x)\le f(x)\le T(x)\quad\text{for all}\quad x \in \T^d\right\}.
$$
Note that for $p=\infty$ the error of the best one-sided approximation coincides up to a constant with the error of the unrestricted best approximation $E_{M}(f)_p$, see, e.g.,~\cite[p.~163]{SP}.

For a sequence $\{a_k\}_{k\in D(M)}\in \C$, we denote
$$
\left\Vert \{a_k\}_{k}\right\Vert_{\ell_{p,M}}:=\left\{
                                                 \begin{array}{ll}
                                                  \displaystyle\bigg(\frac1{m}\sum\limits_{k\in D(M)}|a_k|^p\bigg)^\frac1p, & \hbox{if $1\le p<\infty$,} \\
                                                   \displaystyle\sup\limits_{k\in D(M)}|a_k|, & \hbox{if $p=\infty$.}
                                                 \end{array}
                                               \right.
$$

In this paper, we will use the following notation for the rectangular partial sums of the Fourier series and the de la Vall\'ee Poussin means of $f$:
$$
S_{M}f(x):=\sum_{k\in D(M)} \h f(k) {\rm e}^{2\pi {\rm i}(k,x)},
$$
$$
V_{M}f(x):=\sum_{k\in D(M)} v(M^{-1}k) \h f(k){\rm e}^{2\pi {\rm i}(k,x)},
$$
where $v\in C^\infty (\R^d)$, $v(\xi)=1$ for $\xi\in [-1/4,1/4)^d$ and $v(\xi)=0$ for $\xi\not\in [-3/8,3/8)^d$. Recall the following well-known inequalities (see, e.g.,~\cite[2.1.6, 2.4.5, and 4.1.1]{TB}):
\begin{equation}\label{se}
\Vert f-S_M f\Vert_p\le c(p,d)E_{M}(f)_p,\quad 1<p<\infty,
\end{equation}
\begin{equation}\label{ve}
  \Vert f-V_M f\Vert_p\le \(1+\|v\|_{L_1(\R^d)}\)E_{\frac 12M}(f)_p\le c(d)E_{\frac 12M}(f)_p,\quad 1\le p\le \infty.
\end{equation}

The Dirichlet kernel with respect to the matrix $M$ is defined by
$$
\mathscr{D}_{M}(x)=\sum_{k\in D(M)} {\rm e}^{2\pi {\rm i}(k,x)}.
$$

Let $\vp$ be a a trigonometric polynomial and $f\in L_p$, $1\le p\le \infty$. Denote
$$
K_{\phi,p} :=\sup_{\Vert f\Vert_p\le 1}\Vert {\phi}*f\Vert_p.
$$
Note that (see, e.g.,~\cite[Ch.~8]{TB}) if
$\h{\phi_j}(\xi)=\chi_{M^j [-\frac12,\frac12)^d}(\xi)$, where $\chi_G$ denote the characteristic function of the set $G$, then ${\phi_j}*f=S_{M^j} f$ and
$$
K_{\phi_j,p} \asymp \left\{
                                 \begin{array}{ll}
                                   1, & \hbox{$1<p<\infty$,} \\
                                   j^d, & \hbox{$p=1$ or $\infty$.}
                                 \end{array}
                               \right.
$$
The averaging operator with respect to the matrix $M$ is defined by
$$
{\rm Avg}_M f(x)=m^{-1} \int_{M[-\frac12,\frac12)^d} f(t+x){\rm d}t.
$$

\begin{definition}\label{def1}
Let $\w\vp \in \mathcal{D}'$ and $1\le p\le \infty$. We will say that a function $f$ belongs to the class $B_{\w\vp,p}$ if $f\in L_p$
and 
$$
\sum_{\ell\in\zd}
 {\h{\w\phi}(\ell)} \h f(\ell)    {\rm e}^{2 \pi {\rm i} (\ell,x)}
$$
is a Fourier series of a certain bounded function, which we denote by ${{\w \phi}}*f$.
\end{definition}


Typical examples of $B_{\w\vp,p}$ are the following:
1) if $\w\vp$ is  a finite complex-valued Borel measure on $\T^d$ and $p=\infty$, then $B_{\w\vp,p}=B$, see, e.g.,~\cite[7.1.4]{TB};
2) if $\w\vp \in L_q$, $1/p+1/q=1$, then by Young's convolution inequality, we have that $B_{\w\vp,p}=L_p$.


Now, let us introduce the main object of this paper.  Let $j\in \N$, $\w\vp_j \in \mathcal{D}'$, $\vp_j \in L_p$, and $f\in B_{\w\vp_j,p}$ be given.
We define the general multivariate periodic quasi-interpolation operator by
\begin{equation}\label{oper}
  Q_j(f,\vp_j,\w\vp_j)(x)=\frac{1}{m^j} \sum\limits_{k\in D(M^j)} {\w \phi_j}*f(M^{-j}k) \phi_j (x - M^{-j}k).
\end{equation}

Note that for functions $f$ from some special Wiener and Besov classes, similar quasi-interpolation operators have been recently studied in~\cite{KKS2}. Particularly, in terms of decay of the Fourier coefficients of $f$, there were obtained several types of estimates of approximation by operators~\eqref{oper} in the Wiener-type spaces and the spaces $L_p(\T^d)$ with $2\le p\le \infty$. In the present paper, we essentially extend and improve the results given in~\cite{KKS2} in several directions. First of all,
using an approach based on the best one-sided approximations and Fourier multipliers, we obtain error estimates in $L_p(\T^d)$ for all $1\le p\le \infty$. Second, using new type of compatibility conditions for $\vp_j$ and $\w\vp_j$, we give the corresponding error estimates in terms of classical and fractional moduli of smoothness and $K$-functionals, which are commonly used in approximation theory and in most cases provide sharper estimates than those given in~\cite{KKS2} in terms of the Fourier coefficients of $f$. Third, together with estimates from above of the $L_p$-error of approximation, we obtain also the estimates from below, which show the sharpness of our results for particular classes of quasi-interpolation operators.

\bigskip
\bigskip

\section{Auxiliary results}

The next lemma is one of the main auxiliary results in this paper.

\begin{lemma}\label{thKS}
Let $1\le p\le\infty$, $1/p+1/q=1$, $\d\in (0,1]$,  and $j\in \N$. Suppose that $\w\phi_j\in \mathcal{D}'$ and
$\phi_j \in \mathcal{T}_{M^j}$.
Then, for any $f\in B_{\w\vp_j, p}$, we have
\begin{equation*}
\begin{split}
     \Vert f - Q_j(f,\phi_j,\w\phi_j) \Vert_p\le C \Big(\Vert {\psi_j}*T_j\Vert_{p}&+E_{\d M^j}(f)_p\\
     &+
     K_{\phi_j,q} \big(\w E_{M^j}({{\w\phi_j}}*f)_p+\Vert {{\w\phi_j}}*f-{{\w\phi_j}}*T_j\Vert_p\big)\Big),
\end{split}
\end{equation*}
where
\begin{equation}\label{psi1}
  \psi_j(x)=\sum_{\ell\in D(M^j)} \(1-\h{\phi_j}(\ell)\h{\w\phi_j}(\ell)\){\rm e}^{2\pi{\rm i}(\ell,x)},
\end{equation}
the polynomial $T_j\in \mathcal{T}_{M^j}$ is such that $\Vert f-T_j\Vert_p\le c(d,p,\d)E_{\d M^j}(f)_p$,
and the constant $C$ does not depend on $f$ and $j$.
\end{lemma}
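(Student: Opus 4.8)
The plan is to reduce everything to three facts: an exact formula for $Q_j(T,\phi_j,\w\phi_j)$ when $T\in\mathcal{T}_{M^j}$; an $L_p$-bound for the synthesis map $\{a_k\}\mapsto\frac1{m^j}\sum_k a_k\phi_j(\cdot-M^{-j}k)$ in terms of $\Vert\{a_k\}\Vert_{\ell_{p,M^j}}$; and a Marcinkiewicz--Zygmund inequality on $\mathcal{T}_{M^j}$ linking that discrete norm to $L_p$ and hence to best one-sided approximation. The first fact rests on the orthogonality relation $\frac1{m^j}\sum_{k\in D(M^j)}{\rm e}^{2\pi{\rm i}(\ell-\ell',M^{-j}k)}=\d_{\ell,\ell'}$ for $\ell,\ell'\in D(M^j)$, which holds because $D(M^j)$ is a complete residue system modulo $M^j\zd$ and $M$ is diagonal. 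Expanding $A(M^{-j}k)$ and $\phi_j(\cdot-M^{-j}k)$ into Fourier series and summing over $k$, this relation yields, for every $A\in\mathcal{T}_{M^j}$,
\[
\frac1{m^j}\sum_{k\in D(M^j)}A(M^{-j}k)\,\phi_j(\cdot-M^{-j}k)=\phi_j*A .
\]
Taking $A=\w\phi_j*T$ with $T\in\mathcal{T}_{M^j}$ (so $A\in\mathcal{T}_{M^j}$), this gives $Q_j(T,\phi_j,\w\phi_j)=\sum_{\ell\in D(M^j)}\h{\phi_j}(\ell)\h{\w\phi_j}(\ell)\h{T}(\ell){\rm e}^{2\pi{\rm i}(\ell,\cdot)}$, and consequently $T-Q_j(T,\phi_j,\w\phi_j)=\psi_j*T$ with $\psi_j$ as in \eqref{psi1}.

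\textit{Synthesis bound.} I claim that for every finite sequence $\{a_k\}_{k\in D(M^j)}$ and $1\le p\le\infty$,
\[
\Big\Vert\frac1{m^j}\sum_{k\in D(M^j)}a_k\,\phi_j(\cdot-M^{-j}k)\Big\Vert_p\le C(d)\,K_{\phi_j,q}\,\Vert\{a_k\}\Vert_{\ell_{p,M^j}} .
\]
To see this I would dualize: the left side equals $\sup\{|\frac1{m^j}\sum_k a_k\,\overline{\Phi_h(M^{-j}k)}|:\Vert h\Vert_q\le1\}$, where $\Phi_h(y)=\int_{\td}\overline{\phi_j(x-y)}h(x)\,{\rm d}x$ has Fourier coefficients $\h{\Phi_h}(\ell)=\overline{\h{\phi_j}(\ell)}\,\h h(\ell)$; hence $\Phi_h\in\mathcal{T}_{M^j}$ and $\Vert\Phi_h\Vert_q\le K_{\phi_j,q}\Vert h\Vert_q$, since the convolution operator with Fourier symbol $\overline{\h{\phi_j}(\ell)}$ has the same $L_q$-operator norm as the one with symbol $\h{\phi_j}(\ell)$. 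Then the discrete Hölder inequality and the Marcinkiewicz--Zygmund-type estimate
\[
\Vert\{H(M^{-j}k)\}_{k\in D(M^j)}\Vert_{\ell_{r,M^j}}\le C(d)\Vert H\Vert_r ,\qquad H\in\mathcal{T}_{M^j},\ 1\le r\le\infty,
\]
applied with $r=q$, finish the claim. The last estimate I would prove with a constant independent of $j$ by writing $|H(M^{-j}k)|^r\le |H(x)|^r+r\int_{I_k}|H|^{r-1}|H'|$ for $x$ in the cube $I_k\subset\td$ of volume $m^{-j}$ centred at $M^{-j}k$, integrating in $x$, summing over $k$, and invoking the Bernstein inequality for $\mathcal{T}_{M^j}$ (the case $r=\infty$ is trivial).

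\textit{One-sided approximation and assembly.} Let $T_j$ be as in the statement, so $\Vert f-T_j\Vert_p\le c(d,p,\d)E_{\d M^j}(f)_p$, and put $G:=\w\phi_j*f-\w\phi_j*T_j\in B$. By linearity $Q_j(f-T_j,\phi_j,\w\phi_j)=Q_j(f,\phi_j,\w\phi_j)-Q_j(T_j,\phi_j,\w\phi_j)$ is $\frac1{m^j}\sum_k G(M^{-j}k)\phi_j(\cdot-M^{-j}k)$. Since $\w\phi_j*T_j\in\mathcal{T}_{M^j}$, one has $\w E_{M^j}(G)_p=\w E_{M^j}(\w\phi_j*f)_p$, so I choose $\tau,\Theta\in\mathcal{T}_{M^j}$ with $\tau\le G\le\Theta$ and $\Vert\Theta-\tau\Vert_p$ arbitrarily close to $\w E_{M^j}(\w\phi_j*f)_p$ (passing to real and imaginary parts if $G$ is complex-valued). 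From $|G(M^{-j}k)|\le|\Theta(M^{-j}k)|+(\Theta-\tau)(M^{-j}k)$, the triangle inequality in $\ell_{p,M^j}$, the Marcinkiewicz--Zygmund upper bound applied to $\Theta$ and to $\Theta-\tau$, and $\Vert\Theta\Vert_p\le\Vert G\Vert_p+\Vert\Theta-\tau\Vert_p$, I obtain $\Vert\{G(M^{-j}k)\}\Vert_{\ell_{p,M^j}}\le C(d)\big(\Vert\w\phi_j*f-\w\phi_j*T_j\Vert_p+\w E_{M^j}(\w\phi_j*f)_p\big)$. Combining with the synthesis bound, and using the decomposition
\[
f-Q_j(f,\phi_j,\w\phi_j)=(f-T_j)+\big(T_j-Q_j(T_j,\phi_j,\w\phi_j)\big)-Q_j(f-T_j,\phi_j,\w\phi_j),
\]
in which the first term is $\le c(d,p,\d)E_{\d M^j}(f)_p$, the second equals $\psi_j*T_j$, and the third is $\le C(d)K_{\phi_j,q}$ times the bound just obtained, gives the asserted inequality.

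The step I expect to require the most care is the Marcinkiewicz--Zygmund inequality with a constant independent of $j$: the sampling density $m^j$ is exactly critical for $\mathcal{T}_{M^j}$, so only the upper estimate $\Vert\{H(M^{-j}k)\}\Vert_{\ell_{r,M^j}}\lesssim\Vert H\Vert_r$ is available (the reverse fails at critical density), and it is precisely the duality in the synthesis step that converts the dual estimate into the factor $K_{\phi_j,q}$ with the conjugate index $q$. A secondary technical point is the identification of the $L_q$-norm of convolution against the symbol $\overline{\h{\phi_j}(\ell)}$ with $K_{\phi_j,q}$, and the treatment of complex-valued $\w\phi_j*f$ in the best one-sided approximation, which is handled by separating real and imaginary parts.
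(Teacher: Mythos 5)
Your argument is correct and follows essentially the same route as the paper: the same three-term decomposition through $T_j$, the same orthogonality computation identifying $T_j-Q_j(T_j,\phi_j,\w\phi_j)$ with $\psi_j*T_j$, and the same treatment of the remainder via the Marcinkiewicz--Zygmund inequality together with one-sided approximants of ${\w\phi_j}*f$. The only difference is that you also supply duality-based proofs of the synthesis bound and the sampling inequality, which the paper simply cites as Lemmas~\ref{lemKK1} and~\ref{lemMZ} from the literature.
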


Before proving Lemma~\ref{thKS}, we give one simple corollary of Lemma~\ref{thKS} for the partial sums of the Fourier series $S_{M^j} f$ and the de la Vall\'ee Poussin means $V_{M^j} f$.

\begin{corollary}\label{cor1thKS}
Under the conditions of Lemma~\ref{cor1thKS}, we have:

a) if $1<p<\infty$, then
\begin{equation}\label{KS000_c1}
\begin{split}
     \Vert f - Q_j(f,\phi_j,\w\phi_j) \Vert_p\le C \Big(\Vert {\psi_j}*S_{M^j}f\Vert_{p}+E_{M^j}(f)_p+
     K_{\phi_j,q} \w E_{M^j}({{\w\phi_j}}*f)_p\Big),
\end{split}
\end{equation}

b) if $1\le p\le\infty$, then
\begin{equation}\label{KS000_c2}
\begin{split}
     \Vert f - Q_j(f,\phi_j,\w\phi_j) \Vert_p\le C \Big(\Vert {\psi_j}*V_{M^j}f\Vert_{p}+E_{\frac12M^j}(f)_p+
     K_{\phi_j,q} \w E_{\frac12 M^j}({{\w\phi_j}}*f)_p\Big),
\end{split}
\end{equation}
where the constant $C$ does not depend on $f$ and $j$ and the function $\psi_j$ is given by~\eqref{psi1}.
\end{corollary}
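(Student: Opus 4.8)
The plan is to obtain both inequalities directly from Lemma~\ref{thKS} by substituting two convenient choices of the auxiliary polynomial $T_j$ and then absorbing the extra term $\Vert{\w\phi_j}*f-{\w\phi_j}*T_j\Vert_p$ into the remaining terms. Throughout I write $g:={\w\phi_j}*f$, which by Definition~\ref{def1} is a bounded function; in particular $g\in L_p$ for all $p$ and $\w E_{M}(g)_p$ is meaningful. I will also use two elementary observations: $E_{M}(g)_p\le\w E_{M}(g)_p$ for every dilation matrix $M$ (if $t\le g\le T$ pointwise then $|g-T|\le|t-T|$), and $\w E_{M^j}(g)_p\le\w E_{\frac12 M^j}(g)_p$ since $\mathcal{T}_{\frac12 M^j}\subset\mathcal{T}_{M^j}$.

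For part a), with $1<p<\infty$, I would apply Lemma~\ref{thKS} with $\d=1$ and $T_j=S_{M^j}f$: by~\eqref{se} this polynomial satisfies $\Vert f-S_{M^j}f\Vert_p\le c(p,d)E_{M^j}(f)_p$, which is exactly the requirement on $T_j$ for $\d M^j=M^j$. Then $\Vert\psi_j*T_j\Vert_p=\Vert\psi_j*S_{M^j}f\Vert_p$ and $E_{\d M^j}(f)_p=E_{M^j}(f)_p$, so it remains only to handle $\Vert{\w\phi_j}*f-{\w\phi_j}*S_{M^j}f\Vert_p$. Since convolution with $\w\phi_j$ acts on Fourier coefficients by multiplication by $\h{\w\phi_j}(\ell)$, it commutes with the projection $S_{M^j}$, so comparing Fourier coefficients yields the identity ${\w\phi_j}*f-{\w\phi_j}*S_{M^j}f=g-S_{M^j}g$; applying~\eqref{se} to $g$ and then $E_{M^j}(g)_p\le\w E_{M^j}(g)_p$ gives $\Vert g-S_{M^j}g\Vert_p\le c(p,d)\,\w E_{M^j}(g)_p$. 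Inserting this into Lemma~\ref{thKS} produces~\eqref{KS000_c1}.

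For part b), with $1\le p\le\infty$, I would instead take $\d=\frac12$ and $T_j=V_{M^j}f\in\mathcal{T}_{M^j}$; by~\eqref{ve} it satisfies $\Vert f-V_{M^j}f\Vert_p\le c(d)E_{\frac12 M^j}(f)_p$, so the hypothesis of Lemma~\ref{thKS} holds with $\d M^j=\frac12 M^j$. The same Fourier-coefficient computation gives ${\w\phi_j}*f-{\w\phi_j}*V_{M^j}f=g-V_{M^j}g$, and~\eqref{ve} applied to $g$ together with $E_{\frac12 M^j}(g)_p\le\w E_{\frac12 M^j}(g)_p$ bounds it by $c(d)\,\w E_{\frac12 M^j}(g)_p$. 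Combining this with $\w E_{M^j}(g)_p\le\w E_{\frac12 M^j}(g)_p$ in Lemma~\ref{thKS} yields~\eqref{KS000_c2}.

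The only step that needs a bit of care — and the one I would verify most carefully — is the identity ${\w\phi_j}*f-{\w\phi_j}*T_j=g-S_{M^j}g$ (respectively $g-V_{M^j}g$): it relies on the distribution $\w\phi_j$ acting diagonally in the Fourier basis and on $g={\w\phi_j}*f$ being a genuine bounded $L_p$ function, so that~\eqref{se}, \eqref{ve} and the one-sided best-approximation quantities are all legitimately applied to $g$. The rest is just bookkeeping of the constant $C$ from Lemma~\ref{thKS}.
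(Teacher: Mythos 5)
Your proof is correct and follows essentially the same route as the paper: the authors likewise take $T_j=S_{M^j}f$ (for $1<p<\infty$) and $T_j=V_{M^j}f$ (for $1\le p\le\infty$) in the argument of Lemma~\ref{thKS}, and absorb $\Vert\w\phi_j*f-\w\phi_j*T_j\Vert_p$ via the commutation identity $\w\phi_j*V_{M^j}f=V_{M^j}(\w\phi_j*f)$ together with \eqref{se}, \eqref{ve} and $E\le\w E$, exactly as you do.
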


\begin{proof}
The inequalities~\eqref{KS000_c1} and~\eqref{KS000_c2} can be obtained repeating the proof of Lemma~\ref{thKS} presented below by taking $T_j=S_{M^j} f$ in the case $1<p<\infty$ and $T_j=V_{M^j} f$ in the case $1\le p\le \infty$.
We need also to use~\eqref{se}, \eqref{ve}, and the following simple inequalities
\begin{equation}\label{zvezzvez}
  \begin{split}
    \Vert {{\w\phi_j}}*f-{{\w\phi_j}}*V_{M^j} f\Vert_p&=\Vert {{\w\phi_j}}*f-V_{M^j} ({{\w\phi_j}}*f)\Vert_p\\
&\le C E_{\frac12 M^j}({{\w\phi_j}}*f)_p\le C \w E_{\frac12 M^j}({{\w\phi_j}}*f)_p.
  \end{split}
\end{equation}
\end{proof}

To prove Lemma~\ref{thKS}, we will use a standard Marcinkiewicz-Zygmund inequality for multivariate trigonometric polynomials given in the following lemma. Its proof follows easily from the corresponding one-dimensional result, see, e.g.,~\cite{LMN}.

\begin{lemma}\label{lemMZ}
Let $1\le p\le\infty$, $j\in \N$, and $T_j\in \mathcal{T}_{M^j}$. Then
\begin{equation*}
  \left\Vert \{ T_j(M^{-j}k) \}_{k} \right\Vert_{\ell_{p,M^j}}\le c(d,p)\Vert T_j\Vert_p.
\end{equation*}
\end{lemma}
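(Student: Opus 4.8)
The plan is to prove the multivariate Marcinkiewicz--Zygmund inequality of Lemma~\ref{lemMZ} by reducing it to the one-dimensional case and then applying the one-dimensional result coordinate by coordinate via a tensor-product (iteration) argument. Recall that $M={\rm diag}(m_1,\dots,m_d)$ with $|m_i|>1$, so that $D(M^j)$ is the Cartesian product $\prod_{i=1}^d \big(\{-|m_i|^j/2,\dots,|m_i|^j/2-1\}\big)$ of one-dimensional frequency ranges, and the sampling points $M^{-j}k$ run over a tensor grid. Because the matrix is diagonal, a polynomial $T_j\in\mathcal{T}_{M^j}$ is a trigonometric polynomial of degree less than $|m_i|^j/2$ in each variable $x_i$ separately, and the $\ell_{p,M^j}$-norm of the samples factors as an iterated average over the $d$ coordinate directions. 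This is exactly the structure needed to peel off one variable at a time.

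First I would fix the one-dimensional statement to be used: for a univariate trigonometric polynomial $t$ of degree less than $N$ and $1\le p\le\infty$, the classical Marcinkiewicz--Zygmund inequality gives
$$
\bigg(\frac1{N}\sum_{k=0}^{N-1}\big|t(k/N)\big|^p\bigg)^{1/p}\le c(p)\,\Vert t\Vert_{L_p(\T)},
$$
with the obvious $\sup$-replacement for $p=\infty$, and the constant $c(p)$ independent of $N$ and of the particular shift of the grid. This is the result cited from~\cite{LMN}. The plan is to apply this inequality successively in each of the $d$ variables. Concretely, for fixed values of the remaining coordinates, the function $x_1\mapsto T_j(x_1,x_2,\dots,x_d)$ is a univariate polynomial of degree less than $|m_1|^j/2$, so the one-dimensional inequality controls the discrete $\ell_p$-average over $k_1$ by the $L_p$-integral over $x_1$; one then iterates this bound for the variables $x_2,\dots,x_d$, picking up a factor $c(p)$ at each of the $d$ steps and arriving at the constant $c(d,p)=c(p)^d$.

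The mild technical point, rather than a genuine obstacle, is to make the iteration rigorous by writing the $\ell_{p,M^j}$-norm as a nested sum and moving the one-dimensional estimate inside the sums over the other indices. The clean way is to introduce the partial discretization operator in the $i$-th variable and use Minkowski's integral inequality together with Fubini's theorem: after applying the one-dimensional bound in variable $x_1$ and raising to the $p$-th power, one integrates the remaining variables and sums the remaining discrete indices, and since $p\ge1$ the order of the $\ell_p$-sum over $k_1$ and the $L_p$-integral over $(x_2,\dots,x_d)$ may be interchanged at the cost of no additional constant. Repeating the argument for each coordinate yields
$$
\big\Vert \{T_j(M^{-j}k)\}_k\big\Vert_{\ell_{p,M^j}}\le c(p)^d\,\Vert T_j\Vert_p,
$$
which is precisely the claimed estimate with $c(d,p)=c(p)^d$. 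For $p=\infty$ the same argument applies with suprema in place of the averaged sums and is in fact immediate, since sampling never increases the supremum. The main thing to be careful about is simply bookkeeping the product structure of $D(M^j)$ and of the tensor grid $\{M^{-j}k\}$ so that each application of the univariate inequality acts on a genuine one-variable polynomial of the correct degree.
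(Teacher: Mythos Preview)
Your proposal is correct and follows exactly the approach the paper indicates: the paper does not give a detailed proof but merely states that the lemma ``follows easily from the corresponding one-dimensional result, see, e.g.,~\cite{LMN}'', and your coordinate-by-coordinate iteration of the univariate Marcinkiewicz--Zygmund inequality, exploiting the diagonal structure of $M$ and the tensor-product nature of $D(M^j)$, is precisely the intended argument. One small remark: the interchange you need at each step is really just Fubini/Tonelli (everything is at the level of $p$-th powers, so no genuine Minkowski step is required), but this does not affect the validity of the proof.
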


The next lemma was proved in~\cite[Lemma16]{KKS2}.

\begin{lemma}\label{lemKK1} 
Let $1\le p\le \infty$, $1/p+1/q=1$, $j\in \N$, $\{a_k\}_{k}\in \mathbb{C}$, and $\phi_j \in \mathcal{T}_{M^j}$. Then
\begin{equation*}
  \bigg\Vert \frac1{m^j}\sum_{k\in D(M^j)} a_k \phi_j(\cdot-M^{-j}k)\bigg\Vert_p
\le  C K_{\phi_j,q} \left\Vert \{a_k\}_{k}\right\Vert_{\ell_{p,M^j}},
\end{equation*}
where the constant $C$ does not depend on $j$ and $\{a_k\}$.
\end{lemma}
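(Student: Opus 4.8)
The plan is to prove the inequality by $L_p$--$L_q$ duality, which is essential here: the bound features the convolution constant $K_{\phi_j,q}$ with the \emph{conjugate} exponent $q$, and a direct computation of the Fourier coefficients of $g:=\frac1{m^j}\sum_{k\in D(M^j)}a_k\phi_j(\cdot-M^{-j}k)$ would only produce the ``wrong'' constant $K_{\phi_j,p}$. Since $g\in\mathcal{T}_{M^j}$ is in particular continuous, we have $\Vert g\Vert_p=\sup\{|\langle g,h\rangle|:h\in L_q,\ \Vert h\Vert_q\le1\}$ for every $1\le p\le\infty$ (for $p=\infty$ this is the norming duality of $C(\td)$ against $L_1$). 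It therefore suffices to estimate $|\langle g,h\rangle|$ uniformly for such test functions $h$.

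First I would interchange the finite sum with the integral and rewrite the pairing as a discrete sum. Setting $\Phi(x):=\overline{\phi_j(-x)}$ and $b_k:=\int_{\td}\phi_j(x-M^{-j}k)\overline{h(x)}\,{\rm d}x$, a change of variables gives $b_k=\overline{(\Phi*h)(M^{-j}k)}$, so that $\langle g,h\rangle=\frac1{m^j}\sum_{k\in D(M^j)}a_k\,b_k$. The key structural observation is that $\widehat\Phi(\ell)=\overline{\widehat{\phi_j}(\ell)}$, hence $\Phi$ has the \emph{same} spectrum as $\phi_j$ and thus $\Phi\in\mathcal{T}_{M^j}$; consequently $\Phi*h\in\mathcal{T}_{M^j}$, and the numbers $b_k$ are exactly the lattice values of a trigonometric polynomial to which Lemma~\ref{lemMZ} applies.

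The estimate is then assembled in three steps. Applying H\"older's inequality with respect to the normalized counting measure $\frac1{m^j}\sum_{k\in D(M^j)}$ yields $|\langle g,h\rangle|\le\Vert\{a_k\}\Vert_{\ell_{p,M^j}}\Vert\{b_k\}\Vert_{\ell_{q,M^j}}$, valid for all $1\le p\le\infty$ including the endpoints. Next, Lemma~\ref{lemMZ} bounds the discrete norm of the samples: $\Vert\{b_k\}\Vert_{\ell_{q,M^j}}=\Vert\{(\Phi*h)(M^{-j}k)\}\Vert_{\ell_{q,M^j}}\le c(d,q)\Vert\Phi*h\Vert_q$. Finally, by the very definition of the convolution constant, $\Vert\Phi*h\Vert_q\le K_{\Phi,q}\Vert h\Vert_q\le K_{\Phi,q}$; combining these and taking the supremum over $h$ gives $\Vert g\Vert_p\le c(d,q)K_{\Phi,q}\Vert\{a_k\}\Vert_{\ell_{p,M^j}}$.

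It remains to replace $K_{\Phi,q}$ by $K_{\phi_j,q}$, and this identification is the one point requiring a little care. Since $\Phi=\overline{\phi_j(-\cdot)}$, the reflection $f\mapsto f(-\cdot)$ and the conjugation $f\mapsto\overline f$ are (conjugate-)linear isometries of $L_q$ that intertwine convolution with $\phi_j$ and convolution with $\Phi$; hence the two convolution operators have equal $L_q\to L_q$ norms, i.e. $K_{\Phi,q}=K_{\phi_j,q}$, which closes the argument with $C=c(d,q)$. I expect the main (mild) obstacle to be precisely this symmetry bookkeeping together with the correct placement of the conjugate exponent $q$: one must route the argument through duality so that the convolution acts on the test function $h\in L_q$ rather than on the data $\{a_k\}$, and must verify that passing to $\Phi$ neither enlarges the spectrum (so that Lemma~\ref{lemMZ} remains applicable) nor changes the convolution norm.
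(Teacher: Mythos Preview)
Your proof is correct. The paper does not supply its own proof of this lemma but instead cites \cite[Lemma~16]{KKS2}; your duality argument via Lemma~\ref{lemMZ} is precisely the standard route and is self-contained within the present paper's framework. The only remark is that your constant $C=c(d,q)$ depends on $q$ (equivalently on $p$), which is consistent with the statement since the lemma only requires $C$ to be independent of $j$ and $\{a_k\}$.
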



\begin{proof}[Proof of Lemma~\ref{thKS}.]  We consider only the case $1\le p<\infty$. The case $p=\infty$ can be treated similarly.
We have
\begin{equation}\label{I123}
  \begin{split}
      &\bigg\Vert f - \frac1{m^j}\sum\limits_{k\in D(M^{j})} {\w \phi_j}*f(M^{-j}k) \phi_j (\cdot - M^{-j}k)\bigg\Vert_p\\
      &\le \Vert f-T_j\Vert_p+\bigg\Vert T_j-\frac1{m^j}\sum\limits_{k\in D(M^{j})} {{\w \phi_j}}*T_j(M^{-j}k) \phi_j (\cdot - M^{-j}k)\bigg\Vert_p\\
      &\quad+\bigg\Vert \frac1{m^j}\sum\limits_{k\in D(M^{j})} \({\w \phi_j}*f(M^{-j}k)-{\w \phi_j}*T_j(M^{-j}k)\) \phi_j (\cdot - M^{-j}k)\bigg\Vert_p:=I_1+I_2+I_3.
   \end{split}
\end{equation}

First, we consider $I_2$. We have
\begin{equation}\label{zvezda1ra}
  \begin{split}
      &T_j(x)-\frac1{m^j}\sum\limits_{k\in D(M^{j})} {{\w \phi_j}}*T_j(M^{-j}k) \phi_j (x - M^{-j}k)\\
      &=\sum_{\ell \in D(M^j)} \(\h{T_j}(\ell)-\frac{\h{\phi_j}(\ell)}{m^j} \sum_{k\in D(M^j)} {\w{\phi}_j}*T_j(M^{-j}k) {\rm e}^{-2\pi {\rm i}(\ell, M^{-j}k)}\) {\rm e}^{2\pi {\rm i}(\ell,x)}\\
      &=\sum_{\ell \in D(M^j)} \(\h{T_j}(\ell)-{\h{\phi_j}(\ell)}\sum_{\nu\in D(M^j)} \h{\w \phi_j}(\nu)\h{T_j}(\nu) \frac1{m^j} \sum_{k\in D(M^j)} {\rm e}^{2\pi {\rm i}(\nu-\ell, M^{-j}k)}\) {\rm e}^{2\pi {\rm i}(\ell,x)}\\
      &=\sum_{\ell \in D(M^j)} \(\h{T_j}(\ell)-\h{\phi_j}(\ell)\h{\w \phi_j}(\ell)\h{T_j}(\ell)\) {\rm e}^{2\pi {\rm i}(\ell,x)}={\psi_j}*T_j(x),
   \end{split}
\end{equation}
which implies that
\begin{equation}\label{I2}
  I_2=\Vert {\psi_j}*T_j\Vert_p.
\end{equation}

Consider $I_3$.
Let $u_j, U_j \in \mathcal{T}_{M^j}$ be such that $u_j(x)\le{{\w\phi_j}}*f(x)\le U_j(x)$ for all $x\in \T^d$ and $\|u_j-U_j\|_p\le 2\w E_{M^j}({{\w\phi_j}}*f)_p$.
Then, using Lemmas~\ref{lemKK1}   and~\ref{lemMZ}, we derive
\begin{equation}\label{I3}
  \begin{split}
        I_3&\le C K_{\phi_j,q} \bigg(\frac1{m^j} \sum_{k\in D(M^j)}|{{\w\phi_j}}*f(M^{-j} k)-{{\w\phi_j}}*T_j(M^{-j} k)|^p\bigg)^\frac1p\\
        &\le C K_{\phi_j,q}\Bigg(\bigg(\frac1{m^j} \sum_{k\in D(M^j)}|U_j(M^{-j} k)-{{\w\phi_j}}*T_j(M^{-j} k)|^p\bigg)^\frac1p\\
        &\qquad\qquad\qquad\qquad+\bigg(\frac1{m^j} \sum_{k\in D(M^j)}|U_j(M^{-j} k)-{{\w\phi_j}}*f(M^{-j} k)|^p\bigg)^\frac1p\Bigg)\\
        &\le C K_{\phi_j,q} \Bigg(\Vert U_j-{{\w\phi_j}}*T_j\Vert_p+\bigg(\frac1{m^j} \sum_{k\in D(M^j)}|U_j(M^{-j} k)-u_j(M^{-j} k)|^p\bigg)^\frac1p\Bigg)\\
        &\le C K_{\phi_j,q} \(\Vert U_j-{{\w\phi_j}}*T_j\Vert_p+\|U_j-u_j\|_p\)\\
        &\le C K_{\phi_j,q} \(\| U_j-{{\w\phi_j}}*f\|_p+\|U_j-u_j\|_p+\|{{\w\phi_j}}*f-{{\w\phi_j}}*T_j\|_p\)\\
        &\le C K_{\phi_j,q} \(\w E_{M^j}({{\w\phi_j}}*f)_p+\|{{\w\phi_j}}*f -{{\w\phi_j}}*T_j\|_p\).
   \end{split}
\end{equation}

Finally, combining~\eqref{I123}, \eqref{I2}, and~\eqref{I3}, we prove the lemma.
\end{proof}


In Lemma~\ref{thKS}, the error estimate was given in terms of the best one-sided approximation $\w E_{M^j}({{\w\phi_j}}*f)_p$ for the function $f\in B_{\w\vp_j,p}$.
Under more restrictive conditions on the function $\w\vp_j$, we can take $B_{\w\vp_j,p}=L_p$ and replace the best one-sided approximation with the unrestricted best approximation.
For this, we will use the following special norms for a function $\w\vp_j \in L_q$, $j\in \N$:
$$
\Vert \w\phi_j\Vert_{\mathcal{L}_{q,j}}:=\(m^j\int_{M^{-j}\T^d} \bigg(\frac1{m^j}\sum_{k\in D(M^j)} |\w\phi_j(x-M^{-j}k)| \bigg)^q {\rm d}x\)^\frac1q\quad\text{if}\quad 1\le q<\infty
$$
and
$$
\Vert \w\phi_j\Vert_{\mathcal{L}_{\infty,j}}:=\frac1{m^j}\sup_{x\in \R^d}\sum_{k\in D(M^j)} |\w\phi_j(x-M^{-j}k)| \quad\text{if}\quad q=\infty.
$$

We have the following improvement of Lemma~\ref{thKS} for $\w\vp_j \in L_q$:

\begin{lemma}\label{thKK}
Let $1\le p\le\infty$, $1/p+1/q=1$, $\d\in (0,1]$,  and $j\in \N$. Suppose that $\w\phi_j\in L_q$ and
$\phi_j\in \mathcal{T}_{M^j}$.
Then, for any $f\in L_p$, we have
\begin{equation*}
\begin{split}
     \Vert f - Q_j(f,\phi_j,\w\phi_j) \Vert_p&\le C \Big(\Vert {\psi_j}*T_j\Vert_{p}+(1+K_{\phi_j,q} \Vert \w\phi_j \Vert_{\mathcal{L}_{q,j}})E_{\d M^j}(f)_p\Big),
\end{split}
\end{equation*}
where $\psi_j$ is given by~\eqref{psi1}, the polynomial $T_j\in \mathcal{T}_{M^j}$ is such that $\Vert f-T_j\Vert_p\le c(d,p,\d)E_{\d M^j}(f)_p$,
and the constant $C$ does not depend on $f$ and $j$.
\end{lemma}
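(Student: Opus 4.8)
The plan is to follow the structure of the proof of Lemma~\ref{thKS} verbatim, replacing only the treatment of the term $I_3$, which is where the one-sided approximation $\w E_{M^j}({{\w\phi_j}}*f)_p$ entered. As before, I fix a polynomial $T_j\in\mathcal{T}_{M^j}$ with $\Vert f-T_j\Vert_p\le c(d,p,\d)E_{\d M^j}(f)_p$ (for instance $T_j=V_{\d M^j/2}f$ up to a harmless rescaling, or simply an optimal polynomial), and split
\begin{equation*}
\Vert f-Q_j(f,\phi_j,\w\phi_j)\Vert_p\le \Vert f-T_j\Vert_p+I_2+I_3,
\end{equation*}
where $I_2$ and $I_3$ are exactly as in \eqref{I123}. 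The identity \eqref{zvezda1ra} is purely algebraic and uses only $\phi_j\in\mathcal{T}_{M^j}$, so it still gives $I_2=\Vert{\psi_j}*T_j\Vert_p$ with $\psi_j$ as in \eqref{psi1}. The first term is $\le c(d,p,\d)E_{\d M^j}(f)_p$ by the choice of $T_j$. It remains to bound $I_3$ by $C\,K_{\phi_j,q}\Vert\w\phi_j\Vert_{\mathcal{L}_{q,j}}E_{\d M^j}(f)_p$.

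For $I_3$, I first apply Lemma~\ref{lemKK1} with $a_k={\w\phi_j}*f(M^{-j}k)-{\w\phi_j}*T_j(M^{-j}k)={\w\phi_j}*(f-T_j)(M^{-j}k)$, obtaining
\begin{equation*}
I_3\le C\,K_{\phi_j,q}\,\bigl\Vert\{{\w\phi_j}*(f-T_j)(M^{-j}k)\}_k\bigr\Vert_{\ell_{p,M^j}}.
\end{equation*}
Now the key point, and the only genuinely new estimate, is the discrete inequality
\begin{equation*}
\bigl\Vert\{{\w\phi_j}*g(M^{-j}k)\}_k\bigr\Vert_{\ell_{p,M^j}}\le \Vert\w\phi_j\Vert_{\mathcal{L}_{q,j}}\,\Vert g\Vert_p
\end{equation*}
for any $g\in L_p$ (here $g=f-T_j$). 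Applied with $\Vert f-T_j\Vert_p\le c(d,p,\d)E_{\d M^j}(f)_p$ this finishes the proof after combining the three bounds. So the whole lemma reduces to establishing this single convolution-sampling inequality, which is precisely what the norm $\Vert\cdot\Vert_{\mathcal{L}_{q,j}}$ was designed to quantify.

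To prove that inequality (the step I expect to be the main obstacle, though it is routine), I would write ${\w\phi_j}*g(M^{-j}k)=\int_{\T^d}\w\phi_j(M^{-j}k-y)g(y)\,{\rm d}y$, and for $1\le p<\infty$ estimate, using Hölder in $y$ with exponents $q,p$ against the weight $|\w\phi_j(M^{-j}k-y)|$,
\begin{equation*}
|{\w\phi_j}*g(M^{-j}k)|^p\le\Bigl(\int_{\T^d}|\w\phi_j(M^{-j}k-y)|\,{\rm d}y\Bigr)^{p/q}\int_{\T^d}|\w\phi_j(M^{-j}k-y)|\,|g(y)|^p\,{\rm d}y.
\end{equation*}
Since $\int_{\T^d}|\w\phi_j(M^{-j}k-y)|\,{\rm d}y=\Vert\w\phi_j\Vert_1$ is a crude but usable bound, a cleaner route is to sum first: writing $\frac1{m^j}\sum_{k\in D(M^j)}|{\w\phi_j}*g(M^{-j}k)|^p$, interchange sum and integral, and recognize that after the change of variables $y\mapsto y$ restricted to the fundamental domain $M^{-j}\T^d$ and periodization, the sum $\frac1{m^j}\sum_k|\w\phi_j(x-M^{-j}k)|$ appears, raised to the power $p$ via Hölder in the counting measure with exponents $q$ and $p$; integrating over $x\in M^{-j}\T^d$ and multiplying by $m^j$ produces exactly $\Vert\w\phi_j\Vert_{\mathcal{L}_{q,j}}^p\Vert g\Vert_p^p$. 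The case $p=\infty$, $q=1$ is the direct estimate $|{\w\phi_j}*g(M^{-j}k)|\le\Vert g\Vert_\infty\int_{\T^d}|\w\phi_j|\le m^j\Vert\w\phi_j\Vert_{\mathcal{L}_{\infty,j}}\Vert g\Vert_\infty$ combined with $\Vert\{a_k\}\Vert_{\ell_{\infty,M^j}}=\sup_k|a_k|$; and the case $p=1$, $q=\infty$ follows by the triangle inequality and Fubini. Finally, note that one also needs the harmless remark that $\w\phi_j\in L_q$ forces $B_{\w\phi_j,p}=L_p$ (Young's inequality, as recorded after Definition~\ref{def1}), so the hypothesis $f\in L_p$ is enough for $Q_j(f,\phi_j,\w\phi_j)$ to be well defined.
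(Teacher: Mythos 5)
Your proof is correct and takes essentially the same route as the paper's: the decomposition \eqref{I123}, the identity \eqref{zvezda1ra} giving $I_2=\Vert\psi_j*T_j\Vert_p$, and the bound $I_3\le C K_{\phi_j,q}\Vert\w\phi_j\Vert_{\mathcal{L}_{q,j}}\Vert f-T_j\Vert_p$ obtained from Lemma~\ref{lemKK1} together with the convolution-sampling inequality. The ``key discrete inequality'' you isolate is exactly Lemma~\ref{lemKK2-} (quoted in the paper from~\cite{KKS2}), so it can simply be cited; your sketch of it is acceptable apart from a small slip in the case $p=\infty$, $q=1$, where the bound should read $\int_{\T^d}|\w\phi_j|=\Vert\w\phi_j\Vert_{\mathcal{L}_{1,j}}$ rather than $m^j\Vert\w\phi_j\Vert_{\mathcal{L}_{\infty,j}}$.
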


\medskip

The proof of Lemma~\ref{thKK} is based on the following result  (see Lemma~17 in~\cite{KKS2}):

\begin{lemma}\label{lemKK2-}
Let $1\le p\le \infty$, $1/p+1/q=1$, $j\in \N$, and $\w\phi_j \in L_{q}$. Then, for any $f\in L_p$, we have
\begin{equation*}
 \left\Vert \left\{{\w\phi_j}*f(M^{-j}k) \right\}_{k} \right\Vert_{\ell_{p,M^j}}\le \Vert \w\phi_j \Vert_{\mathcal{L}_{q,j}}\Vert f\Vert_p.
\end{equation*}
\end{lemma}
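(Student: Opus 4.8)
The plan is to use the fact that the sampling nodes $M^{-j}k$, $k\in D(M^j)$, are the centres of the $m^j$ congruent cells $C_n:=M^{-j}n+M^{-j}\T^d$, $n\in D(M^j)$, where $M^{-j}\T^d=M^{-j}[-\tfrac12,\tfrac12)^d$ has measure $m^{-j}$, and these cells tile $\T^d$. Since $\w\phi_j\in L_q$ and $f\in L_p$ are Hölder dual, $\w\phi_j*f$ is a well-defined bounded function, so the samples make sense. Splitting the convolution integral over the cells and substituting $t=M^{-j}n+u$ with $u\in M^{-j}\T^d$, I would write
\[
\w\phi_j*f(M^{-j}k)=\int_{M^{-j}\T^d}\Big(\sum_{n\in D(M^j)}\w\phi_j\big(M^{-j}(k-n)-u\big)\,f(M^{-j}n+u)\Big){\rm d}u=\int_{M^{-j}\T^d}h_u(k)\,{\rm d}u,
\]
so that the sampled convolution becomes an average over $u$ of the discrete convolution $h_u$ on the finite grid.

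The endpoint $p=\infty$ ($q=1$) follows at once from Hölder and $1$-periodicity, $|\w\phi_j*f(M^{-j}k)|\le\|f\|_\infty\|\w\phi_j\|_1$, together with a change of variables identifying $\|\w\phi_j\|_1=\Vert\w\phi_j\Vert_{\mathcal{L}_{1,j}}$. For $1\le p<\infty$ the core step is a discrete Young-type estimate for $h_u$. Writing $w_{kn}=|\w\phi_j(M^{-j}(k-n)-u)|$ and applying Jensen's inequality with the probability weights $w_{kn}/W_k$ (where $W_k:=\sum_n w_{kn}$, the case $W_k=0$ being trivial), I would obtain
\[
\frac1{m^j}\sum_{k\in D(M^j)}|h_u(k)|^p\le G(u)^p\,\frac1{m^j}\sum_{n\in D(M^j)}|f(M^{-j}n+u)|^p,\qquad G(u):=\sum_{m\in D(M^j)}|\w\phi_j(M^{-j}m-u)|.
\]
The decisive observation here is that both the row sums $\sum_n w_{kn}$ and the column sums $\sum_k w_{kn}$ equal $G(u)$, independently of $k$ and $n$: translating the grid by a node permutes it modulo $\Z^d$, and $\w\phi_j$ is $1$-periodic, so summing $|\w\phi_j(M^{-j}\ell-u)|$ over any full set of grid residues always produces $G(u)$.

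Taking $\ell_{p,M^j}$-norms and pulling the norm inside the $u$-integral by Minkowski's integral inequality, then applying Hölder in $u$ with exponents $q$ and $p$, gives
\[
\Big\Vert\{\w\phi_j*f(M^{-j}k)\}_k\Big\Vert_{\ell_{p,M^j}}\le\Big(\int_{M^{-j}\T^d}G(u)^q{\rm d}u\Big)^{1/q}\Big(\int_{M^{-j}\T^d}\frac1{m^j}\sum_{n}|f(M^{-j}n+u)|^p{\rm d}u\Big)^{1/p}.
\]
By the tiling $\{C_n\}$ the second factor equals $m^{-j/p}\Vert f\Vert_p$, while a change of variables $u\mapsto-u$, combined with the grid symmetry $\{-M^{-j}m\}\equiv\{M^{-j}m\}\ (\mathrm{mod}\ \Z^d)$, identifies $\int_{M^{-j}\T^d}G^q$ with the defining integral of $\Vert\w\phi_j\Vert_{\mathcal{L}_{q,j}}$, making the first factor $m^{j/p}\Vert\w\phi_j\Vert_{\mathcal{L}_{q,j}}$. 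The powers $m^{\pm j/p}$ cancel, yielding the asserted bound with constant exactly $1$.

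The main obstacle I expect is the bookkeeping rather than any hard analysis: one must carefully verify the $M^{-j}$-periodicity of $G$ and that it is simultaneously the common value of all row and column sums of $(w_{kn})$, and then track the normalising factors $m^{-j}$ appearing in both $\ell_{p,M^j}$ and $\mathcal{L}_{q,j}$ so that the final constant is $1$ and not some surviving power of $m^j$. All the analytic interchanges (Fubini, Minkowski's integral inequality, Hölder) are routine and justified by the integrability guaranteed by $\w\phi_j\in L_q$ and $f\in L_p$.
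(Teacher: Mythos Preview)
Your argument is correct. The paper does not actually prove this lemma; it simply quotes the statement from~\cite{KKS2} (Lemma~17 there) and uses it as a black box, so there is no in-paper proof to compare against.

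Your route---tiling $\T^d$ by the $m^j$ cells $M^{-j}n+M^{-j}\T^d$, rewriting the sampled convolution as $\int_{M^{-j}\T^d}h_u(k)\,{\rm d}u$ with $h_u$ a discrete convolution on $D(M^j)$, applying the Schur/Young bound (all row and column sums of $|\w\phi_j(M^{-j}(k-n)-u)|$ equal $G(u)$ by periodicity), then Minkowski in $u$ followed by H\"older with exponents $q$ and $p$---is exactly the natural self-contained proof, and the normalisations you track do cancel to give constant $1$. The one point worth writing out carefully in a final version is the identification of $\int_{M^{-j}\T^d}G(u)^q\,{\rm d}u$ with $m^{j(q-1)}\Vert\w\phi_j\Vert_{\mathcal{L}_{q,j}}^q$: your sketch invokes the reflection $u\mapsto-u$ and the fact that $\{-M^{-j}m\}$ and $\{M^{-j}m\}$ agree modulo $\Z^d$, which is fine but deserves one explicit line.
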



\begin{proof}[Proof of Lemma~\ref{thKK}.] The proof is similar to the proof of Lemma~\ref{thKS}. It is sufficient to use inequalities~\eqref{I123} and~\eqref{I2} as well as the following estimate
\begin{equation}\label{zvezzvez2}
  \begin{split}
      I_3&\le C K_{\phi_j,q}\(\frac1{m^j}\sum\limits_{k\in D(M^{j})} |{\w\phi_j}*(f-T_j)(M^{-j}k)|^p\)^\frac1p \\
         &\le C K_{\phi_j,q} \Vert \w\phi_j \Vert_{\mathcal{L}_{q,j}}\Vert f-T_j\Vert_p
   \end{split}
\end{equation}
instead of inequality~\eqref{I3}. The above estimate easily follows from  Lemmas~\ref{lemKK1} and~\ref{lemKK2-}.
\end{proof}

\section{Main results}

\subsection{Estimates of approximation in terms of best approximation}

In this subsection, we give an explicit form of the error estimates from Lemmas~\ref{thKS} and~\ref{thKK} in the case of the so-called strictly compatible functions/distributions $\phi_j$ and $\w\phi_j$.

\begin{theorem}\label{cor1}
Let $1\le p\le\infty$, $1/p+1/q=1$, $0<\d\le \rho\le 1$,  and $j\in \N$. Suppose that $\w\phi_j\in \mathcal{D}'$ and
$\phi_j \in \mathcal{T}_{M^j}$ are such that
\begin{equation}\label{sc}
   \h{\phi_j}(k)\h{\w\phi_j}(k)=1\quad \text{for all}\quad k\in D(\rho M^j).
\end{equation}
Then, for any $f\in  B_{\w\vp_j, p}$, we have
\begin{equation}\label{KS000}
\begin{split}
     \Vert f - Q_j(f,\phi_j,\w\phi_j) \Vert_p&\le C \(E_{\delta M^j}(f)_p+K_{\phi_j,q} \(\w E_{M^j}({{\w\phi_j}}*f)_p+\Vert {{\w\phi_j}}*(f-T_j)\Vert_p\)\),\\
\end{split}
\end{equation}
where $T_j\in \mathcal{T}_{\rho M^j}$ is such that $\Vert f-T_j\Vert_p\le c(d,p,\d)E_{\delta M^j}(f)_p$; if, additionally, $\w\phi_j\in L_q$, then, for any $f\in L_p$, we have
\begin{equation}\label{KS000Kant}
  \Vert f - Q_j(f,\phi_j,\w\phi_j) \Vert_p\le C (1+K_{\phi_j,q}\Vert \w\phi_j \Vert_{\mathcal{L}_{q,j}}) E_{\delta M^j}(f)_p,
\end{equation}
where the constant $C$ does not depend on $f$ and~$j$.
\end{theorem}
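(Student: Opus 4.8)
The plan is to read off Theorem~\ref{cor1} from Lemmas~\ref{thKS} and~\ref{thKK}. The only new ingredient is the observation that the strict compatibility condition~\eqref{sc} is designed precisely to annihilate the term $\Vert\psi_j*T_j\Vert_p$ occurring in those lemmas, as soon as the near-best polynomial $T_j$ is chosen with spectrum contained in $D(\rho M^j)$ rather than merely in $D(M^j)$.

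Concretely, since $\mathcal{T}_{\delta M^j}$ is a finite-dimensional subspace of $L_p$, it is proximinal, so one may fix a polynomial $T_j\in\mathcal{T}_{\delta M^j}$ of best $L_p$-approximation to $f$, i.e. $\Vert f-T_j\Vert_p=E_{\delta M^j}(f)_p$. Because $0<\delta\le\rho\le 1$ we have $D(\delta M^j)\subset D(\rho M^j)\subset D(M^j)$, hence $T_j\in\mathcal{T}_{\rho M^j}\subset\mathcal{T}_{M^j}$ and $\Vert f-T_j\Vert_p\le c(d,p,\delta)E_{\delta M^j}(f)_p$ with $c=1$. (Equivalently, for $1<p<\infty$ one could take $T_j=S_{\delta M^j}f$ and invoke~\eqref{se}.) By~\eqref{psi1}, the $\ell$-th Fourier coefficient of $\psi_j*T_j$ equals $\big(1-\h{\phi_j}(\ell)\h{\w\phi_j}(\ell)\big)\h{T_j}(\ell)$: for $\ell\in D(\rho M^j)$ the first factor vanishes by~\eqref{sc}, while for $\ell\notin D(\rho M^j)$ the second factor vanishes because $\spec T_j\subset D(\rho M^j)$. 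Therefore $\psi_j*T_j\equiv 0$, so $\Vert\psi_j*T_j\Vert_p=0$. Substituting this $T_j$ into the conclusion of Lemma~\ref{thKS} and deleting the vanishing summand gives exactly~\eqref{KS000}, after rewriting $\w\phi_j*f-\w\phi_j*T_j=\w\phi_j*(f-T_j)$.

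For the Kantorovich-type estimate~\eqref{KS000Kant}, suppose in addition $\w\phi_j\in L_q$. Then $B_{\w\phi_j,p}=L_p$ by Young's convolution inequality (as noted just after Definition~\ref{def1}), so every $f\in L_p$ is admissible and Lemma~\ref{thKK} applies with the same $T_j\in\mathcal{T}_{\rho M^j}$ as above. Since again $\Vert\psi_j*T_j\Vert_p=0$, the bound of Lemma~\ref{thKK} reduces to $\Vert f-Q_j(f,\phi_j,\w\phi_j)\Vert_p\le C\big(1+K_{\phi_j,q}\Vert\w\phi_j\Vert_{\mathcal{L}_{q,j}}\big)E_{\delta M^j}(f)_p$, which is~\eqref{KS000Kant}. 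The heavy lifting is entirely contained in Lemmas~\ref{thKS} and~\ref{thKK}; here there is no real obstacle beyond the spectral bookkeeping just described, the one mildly nontrivial point being the existence of a near-best approximant whose spectrum sits inside $D(\rho M^j)$ — which is exactly where the hypothesis $\delta\le\rho$ enters.
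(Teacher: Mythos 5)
Your proof is correct and follows essentially the same route as the paper: both reduce Theorem~\ref{cor1} to Lemmas~\ref{thKS} and~\ref{thKK} by choosing $T_j$ with spectrum in $D(\rho M^j)$ so that condition~\eqref{sc} forces $\psi_j*T_j\equiv 0$. Your spectral bookkeeping for why $\psi_j*T_j$ vanishes and the observation that the lemmas' estimates persist for $T_j\in\mathcal{T}_{\rho M^j}\subset\mathcal{T}_{M^j}$ match the paper's (much terser) argument exactly.
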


Note that inequality~\eqref{KS000Kant} was earlier obtained in~\cite{KKS2}.

\begin{proof}
To prove the theorem, it is enough to use Lemmas~\ref{thKS},~\ref{thKK} and to take into account that  $\Vert {\psi_j}*T_j\Vert_{p}=0$ and all estimates in the proof of Lemma~\ref{thKS} remain the same for $T_j\in \mathcal{T}_{\rho M^j}$.
\end{proof}

Similarly to Corollary~\ref{cor1thKS}, we derive the following result:

\begin{corollary}\label{cor1thKS+1}
Under the conditions of Theorem~\ref{cor1}, we have that inequality~\eqref{KS000} can be replaced by
\begin{equation*}
\begin{split}
     \Vert f - Q_j(f,\phi_j,\w\phi_j) \Vert_p\le C \Big(E_{\d M^j}(f)_p+
     K_{\phi_j,q} \w E_{\d M^j}({{\w\phi_j}}*f)_p\Big),
\end{split}
\end{equation*}
where $\d<\rho$ if $p=1,\infty$ and 
the constant $C$ does not depend on $f$ and $j$.
\end{corollary}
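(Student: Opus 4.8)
The plan is to mimic the derivation of Corollary~\ref{cor1thKS} from Lemma~\ref{thKS}, but now taking advantage of the strict compatibility condition~\eqref{sc}, which makes the polynomial term $\Vert\psi_j*T_j\Vert_p$ vanish whenever $\spec T_j\subset D(\rho M^j)$. So the first step is to choose a convenient near-best polynomial $T_j$ that simultaneously (i) lies in $\mathcal{T}_{\rho M^j}$, so that the $\psi_j$-term drops out exactly as in the proof of Theorem~\ref{cor1}, and (ii) is good enough to convert the term $\Vert\w\phi_j*f-\w\phi_j*T_j\Vert_p=\Vert\w\phi_j*(f-T_j)\Vert_p$ appearing in~\eqref{KS000} into something controlled by $\w E_{\d M^j}(\w\phi_j*f)_p$. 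The natural candidates are $T_j=S_{\d M^j}f$ when $1<p<\infty$ (using~\eqref{se}), and $T_j=V_{\d M^j}f$ when $1\le p\le\infty$ (using~\eqref{ve}); in the latter case one needs $\d<\rho$ so that $V_{\d M^j}f\in\mathcal{T}_{\d M^j}\subset\mathcal{T}_{\rho M^j}$ still holds with room to spare for the de la Vall\'ee Poussin construction, whence the stated restriction $\d<\rho$ for $p=1,\infty$. (For $1<p<\infty$ one may instead simply rescale $\d$ slightly and use $S_{\d M^j}$, or keep $\d\le\rho$ and still use $V_{\d M^j}$ with $\d$ replaced by a slightly larger value; the constants absorb this.)

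Next, with $T_j$ so chosen, revisit the bound for $I_3$ in the proof of Lemma~\ref{thKS}. There $I_3$ was estimated by $CK_{\phi_j,q}\bigl(\w E_{M^j}(\w\phi_j*f)_p+\Vert\w\phi_j*(f-T_j)\Vert_p\bigr)$. The key observation is that $\w\phi_j*T_j$ is itself a trigonometric polynomial with spectrum contained in $D(\d M^j)$ (convolution only multiplies Fourier coefficients, so it cannot enlarge the spectrum), so $\w\phi_j*T_j\in\mathcal{T}_{\d M^j}$. Hence, writing $\w\phi_j*(f-T_j)=(\w\phi_j*f)-(\w\phi_j*T_j)$ and using that $\w\phi_j*T_j$ is an admissible approximant to $\w\phi_j*f$ from $\mathcal{T}_{\d M^j}$, one gets
\begin{equation*}
\Vert\w\phi_j*(f-T_j)\Vert_p\le C E_{\d M^j}(\w\phi_j*f)_p\le C\,\w E_{\d M^j}(\w\phi_j*f)_p,
\end{equation*}
provided $T_j$ is chosen so that $\w\phi_j*T_j$ is a near-best (or near-best one-sided, up to a constant) approximant; with $T_j=V_{\d M^j}f$ this follows from $\w\phi_j*V_{\d M^j}f=V_{\d M^j}(\w\phi_j*f)$ together with~\eqref{ve}, exactly as in~\eqref{zvezzvez}, and similarly for $S_{\d M^j}$ via~\eqref{se}. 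Substituting this into~\eqref{KS000} and collapsing the two terms $\w E_{M^j}\le\w E_{\d M^j}$ (monotonicity in the frequency set, since $D(\d M^j)\subset D(M^j)$ for $\d\le 1$) yields the claimed estimate
\begin{equation*}
\Vert f-Q_j(f,\phi_j,\w\phi_j)\Vert_p\le C\Bigl(E_{\d M^j}(f)_p+K_{\phi_j,q}\,\w E_{\d M^j}(\w\phi_j*f)_p\Bigr).
\end{equation*}

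The only genuinely delicate point — and the step I expect to be the main obstacle — is the bookkeeping between the three scale parameters $\d$, $\rho$, and the (implicit) $\tfrac12$-factor hidden in the de la Vall\'ee Poussin construction. One must verify that $T_j$ can be taken in $\mathcal{T}_{\rho M^j}$ (needed for $\Vert\psi_j*T_j\Vert_p=0$ via~\eqref{sc}) while still being a near-best approximant at scale $\d M^j$ and while $\w\phi_j*T_j$ remains a legitimate competitor for $E_{\d M^j}(\w\phi_j*f)_p$; the case $p=1,\infty$ forces $V_{\d M^j}$ (no analogue of~\eqref{se}), and $V_{\d M^j}f$ has spectrum in $D(\d M^j)$ but its definition references frequencies up to $D(\tfrac12\cdot(2\d)M^j)$-type cutoffs, which is precisely why one cannot allow $\d=\rho$ there and must insist $\d<\rho$. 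Once this scale arithmetic is pinned down, everything else is a direct transcription of the proofs of Lemma~\ref{thKS}, Corollary~\ref{cor1thKS}, and Theorem~\ref{cor1}, with no new analytic ingredients.
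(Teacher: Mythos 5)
Your argument is correct and is essentially the paper's own (unwritten) proof: choose $T_j\in\mathcal{T}_{\rho M^j}$ via a Fourier-multiplier near-best approximation operator so that~\eqref{sc} kills $\psi_j*T_j$ in~\eqref{KS000}, then use the commutation of that operator with $\w\phi_j*{}$ together with~\eqref{se}/\eqref{ve} (as in~\eqref{zvezzvez}) to turn $\Vert\w\phi_j*(f-T_j)\Vert_p$ into $C\,\w E_{\delta M^j}(\w\phi_j*f)_p$, and finally absorb $\w E_{M^j}\le \w E_{\delta M^j}$. The one detail worth pinning down is that for $p=1,\infty$ the right choice is a de la Vall\'ee Poussin--type multiplier equal to $1$ on $\delta[-\frac12,\frac12)^d$ and supported in $\rho[-\frac12,\frac12)^d$ (which exists precisely when $\delta<\rho$, with constants depending on $\rho/\delta$), rather than $V_{\delta M^j}$ itself, since~\eqref{ve} applied to $V_{\delta M^j}$ only yields $E_{\frac\delta2 M^j}$ instead of the stated $E_{\delta M^j}$.
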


\begin{example}\label{e1}
If $\w\vp_j$ is the periodic Dirac delta function for all $j\in \N$ and $\vp_j=\mathscr{D}_{M^j}$ is the Dirichlet kernel,
then equality~\eqref{sc} obviously holds with $\rho=\d=1$ and inequality~\eqref{KS000}  implies the following well-known error estimate for the corresponding interpolation operator  (cf.~\cite[Corollary~3]{H}):
\begin{equation*}
 \bigg\Vert f-\frac1{m^j} \sum_{k\in D(M^j)} f(M^{-j}k) \mathscr{D}_{M^j}(\cdot-M^{-j}k)\bigg\Vert_p\le C \kappa_{j,p} \w E_{M^j}(f)_p,
\end{equation*}
where $f\in B$, $1\le p\le \infty$,
\begin{equation}\label{Kp}
  \kappa_{j,p}:=\left\{
          \begin{array}{ll}
            1, & \hbox{$1<p<\infty$,} \\
            j^d, & \hbox{$p=1,\,\infty$}
          \end{array}
        \right.
\end{equation}
and the constant $C$ does not depend on $f$ and $j$.
\end{example}

In the next example, we deal with a periodic Kantorovich-type quasi-interpolation operator gene\-rated by the samples $\{{\rm Avg}_{\s M^{-j}}f(M^{-j}k)\}_k$.

\begin{example}\label{e2}
Let $f\in L_p$, $1\le p\le \infty$, $\s\in (0,1]$, and $j\in \N$. Then
\begin{equation}\label{kd}
   \bigg\Vert f-\frac1{m^j} \sum_{k\in D(M^j)} {\rm Avg}_{\s M^{-j}} f(M^{-j}k) \mathscr{D}_{M^j,\s}^\chi(\cdot-M^{-j}k)\bigg\Vert_p\le C \kappa_{j,p} E_{M^j}(f)_p,
\end{equation}
where
\begin{equation*}
  \mathscr{D}_{M^j,\s}^\chi(x)=\sum_{\ell\in D(M^j)}\prod_{i=1}^d \frac{\pi \s m_i^{-j} \ell_i}{\sin \pi \s m_i^{-j} \ell_i}  {\rm e}^{2\pi {\rm i}(\ell,x)},
\end{equation*}
the constant $\kappa_{j,p}$ is given in~\eqref{Kp} and  $C$ does not depend on $f$ and $j$.
\end{example}

The proof of estimate~\eqref{kd} easily follows from inequality~\eqref{KS000Kant} with $\vp_j=\mathscr{D}_{M^j,\s}^\chi$
and $\w\vp_j=\s^{-d} m^j \chi_{M^{-j}[-\frac\s2,\frac\s2)^d}$. One only needs to take into account that~\eqref{sc} holds with $\rho=\d=1$,
$$
{\rm Avg}_{\s M^{-j}}f(x)=f*\w\vp_j (x)\sim \sum_{\ell\in \Z^d} \prod_{i=1}^d \frac{\sin \pi \s m_i^{-j} \ell_i}{\pi \s m_i^{-j} \ell_i}\h f(\ell){\rm e}^{2\pi {\rm i}(\ell,x)},
$$
$\sup_j \Vert \w\phi_j \Vert_{\mathcal{L}_{q,j}}<\infty$,
and $\sup_{\Vert f\Vert_p\le 1} \Vert f*\mathscr{D}_{M^j,\s}^\chi \Vert_p\le C\sup_{\Vert f\Vert_p\le 1} \Vert f*\mathscr{D}_{M^j} \Vert_p\le C\kappa_{j,p}$. The last estimate follows from the fact that the function
$$
\eta^\chi(\xi)=\eta(\xi) \prod_{i=1}^d \frac{\pi \s\xi_i}{\sin \pi \s\xi_i},
$$
where $\eta\in C^\infty (\R^d)$, $\eta(\xi)=1$ for $\xi\in[-1/2,1/2)^d$ and $\eta(x)=0$ for $\xi\not\in [-1,1)^d$, is a Fourier multiplier in $L_p(\R^d)$ for all $1\le p\le \infty$ (see Lemma~\ref{mult} below).

\subsection{Estimates of approximation in terms of moduli of smoothness and $K$-functionals}

We  need to introduce some additional notation.
For a given matrix $M$, $s\in \N$, and a function $f\in L_p$, we set
$$
\Omega_s(f,M^{-1})_p:=\sup_{|M\delta|<1,\delta\in \R^d} \Vert \Delta_\delta^s f\Vert_p,
$$
where
$$
\Delta_\delta^s f(x):=\sum_{\nu=0}^s (-1)^\nu \binom{s}{\nu} f(x+\delta \nu)
$$
and $\binom{\a}{\nu}=\frac{\a (\a-1)\dots (\a-\nu+1)}{\nu!}$, $\binom{\a}{0}=1$, for any $\a>0$.
This is the so-called (total) anisotropic modulus of smoothness.
Together with this modulus of smoothness, we will also use the classical mixed modulus of smoothness, which for a given vector $\beta\in \Z_+^d$ and a diagonal matrix $M={\rm diag}(m_1,\dots,m_d)$ is defined by
$$
\omega_\beta(f,M^{-1})_p:=\sup_{|\delta_i|<m_i^{-1},\,i=1,\dots,d} \Vert \Delta_{\delta_1 {\rm e}_1}^{\beta_1}\dots \Delta_{\delta_d {\rm e}_d}^{\beta_d} f\Vert_p.
$$

The following relations for the moduli of smoothness defined above were proved in~\cite{Tim}:
\begin{equation}\label{eqMOD1}
  \Omega_s(f,M^{-1})_{{p}}\asymp \sum_{i=1}^d \omega_{s {\rm e}_i}(f,M^{-1})_p,\quad f\in L_p,\quad 1<p<\infty,
\end{equation}
and
\begin{equation}\label{eqMOD2}
\Omega_s(f,M^{-1})_{{p}}\asymp \sum_{[\beta]=s,\,\beta\in \Z_+^d} \omega_{\beta}(f,M^{-1})_p,\quad f\in L_p,\quad 1\le p\le \infty,
\end{equation}
where $\asymp$ is a two-sided inequality with constants that do not depend on $f$ and $j$.

Let us recall several basic properties of moduli of smoothness (see, e.g.,~\cite[Ch.~4]{Nik}).
For  $f,g\in L_p$, $1\le p\le \infty$, and $s\in \N$, we have 
\begin{itemize}

\item[{\rm (a)}]
$              \Omega_s(f+g,M^{-1})_p\le
\Omega_s(f,M^{-1})_p+\Omega_s(g,M^{-1})_p;
$

 \item[{\rm (b)}]
$              \Omega_{s}(f,M^{-1})_p\le 2^s \Vert f\Vert_p$;

\item[{\rm (c)}]
for $\lambda>0$,
        $$\Omega_{s}(f,\lambda M^{-1})_p\le (1+\lambda)^s  \Omega_{s}(f,M^{-1})_p.$$

\smallskip

    \end{itemize}

We will also use the following  Jackson-type theorem in $L_p$ (see, e.g.,~\cite[Theorem 5.2.1 (7)]{Nik} or~\cite[5.3.2]{Timan}):

\begin{lemma}\label{lemJ}
  Let $f\in L_p$, $1\le p\le \infty$, and $s\in \N$. Then, there exists
$T_j\in \mathcal{T}_{M^j}$ such that
\begin{equation*}
  \Vert f-T_j\Vert_p\le C \sum_{i=1}^d \omega_{s {\rm e}_i}(f,M^{-j})_p,
\end{equation*}
where $C$ does not depend on $f$ and $T_j$.
\end{lemma}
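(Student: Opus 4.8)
\textbf{Proof plan for Lemma~\ref{lemJ} (the Jackson-type theorem).}

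The plan is to reduce the multivariate statement to the classical one-dimensional Jackson theorem applied successively in each coordinate direction, using the tensor-product structure of the polynomial space $\mathcal{T}_{M^j}$ (which, since $M$ is diagonal, is exactly the set of trigonometric polynomials of coordinate-wise degree below $m_i^j$). First I would recall the one-variable fact: for $g\in L_p(\T)$ and $n\in\N$ there is a trigonometric polynomial $P$ of degree $<n$ with $\Vert g-P\Vert_{L_p(\T)}\le C\,\omega_s(g,1/n)_{L_p(\T)}$, and moreover the polynomial can be taken to depend \emph{linearly and boundedly} on $g$ — e.g. a Jackson-type convolution operator $J_n$ with $\Vert g-J_n g\Vert_p\le C\omega_s(g,1/n)_p$ and $\Vert J_n\Vert_{L_p\to L_p}\le C$ uniformly in $n$. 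The linearity and uniform boundedness are what make the iteration in several variables go through cleanly.

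The key steps, in order: (1) Let $J^{(i)}$ denote the one-dimensional Jackson operator of degree $<m_i^j$ acting on the $i$-th variable only (with the other variables frozen), so that $J^{(i)}f\in\mathcal{T}_{M^j}$ in the $i$-th variable, $\Vert J^{(i)}\Vert_{p\to p}\le C$, and $\Vert f-J^{(i)}f\Vert_p\le C\,\omega_{s{\rm e}_i}(f,M^{-j})_p$ (the last inequality by Minkowski's integral inequality applied to the one-dimensional estimate in the $i$-th slice, using that the one-dimensional modulus in the slice, integrated in the remaining variables, is bounded by the mixed modulus $\omega_{s{\rm e}_i}$). (2) Set $T_j:=J^{(1)}J^{(2)}\cdots J^{(d)}f$; since each $J^{(i)}$ maps into $\mathcal{T}_{M^j}$ in its own variable and preserves the spectral support in the others, $T_j\in\mathcal{T}_{M^j}$. (3) Telescope the error: write
\begin{equation*}
f-T_j=\sum_{i=1}^d \big(J^{(1)}\cdots J^{(i-1)}\big)\big(f-J^{(i)}f\big)J^{(i+1)}\cdots J^{(d)}\quad\text{(applied to }f\text{)},
\end{equation*}
more precisely $f-T_j=\sum_{i=1}^d J^{(1)}\cdots J^{(i-1)}\big(I-J^{(i)}\big)f$ after inserting the obvious typo-free ordering, and then bound each summand by $\Vert J^{(1)}\cdots J^{(i-1)}\Vert_{p\to p}\,\Vert (I-J^{(i)})f\Vert_p\le C^{d-1}\cdot C\,\omega_{s{\rm e}_i}(f,M^{-j})_p$. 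Summing over $i$ gives $\Vert f-T_j\Vert_p\le C\sum_{i=1}^d\omega_{s{\rm e}_i}(f,M^{-j})_p$ with $C=C(d,p,s)$ independent of $f$ and $j$.

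I expect the only mildly delicate point to be step (1): justifying that the one-dimensional Jackson estimate in a fixed slice integrates up to the stated mixed modulus bound, which is a routine application of the Minkowski integral inequality together with the observation that $\sup_{|\delta_i|<m_i^{-j}}\Vert\Delta_{\delta_i{\rm e}_i}^{s}f\Vert_{L_p(\T^d)}$ dominates the $L_p(\T^{d-1})$-average of the one-dimensional $s$-th moduli over slices; and ensuring the one-dimensional Jackson operators are genuinely uniformly bounded on $L_p$ for all $1\le p\le\infty$ (true for the standard Jackson convolution kernels). Everything else is bookkeeping with the tensor structure, and the references \cite{Nik,Timan} already contain the one-dimensional ingredient, so the multivariate version follows as indicated.
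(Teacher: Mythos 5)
The paper itself offers no proof of this lemma: it is quoted directly from \cite[Theorem 5.2.1\,(7)]{Nik} and \cite[5.3.2]{Timan}. Your overall strategy --- one-dimensional linear Jackson convolution operators $J^{(i)}$ of degree $<m_i^j$ applied coordinatewise, composed into $T_j=J^{(1)}\cdots J^{(d)}f\in\mathcal{T}_{M^j}$, and telescoped via $f-T_j=\sum_{i=1}^d J^{(1)}\cdots J^{(i-1)}(I-J^{(i)})f$ with the uniform bounds $\Vert J^{(i)}\Vert_{p\to p}\le C$ --- is precisely the standard proof in those references, and your steps (2) and (3) are correct as written.

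There is, however, a genuine directional error in your justification of step (1). You propose to integrate the slice-wise estimate $\Vert f(\cdot,x')-J^{(i)}f(\cdot,x')\Vert_{L_p(\T)}\le C\,\omega_s(f(\cdot,x'),m_i^{-j})_{L_p(\T)}$ over $x'\in\T^{d-1}$ and then assert that $\sup_{|\delta_i|<m_i^{-j}}\Vert\Delta_{\delta_i{\rm e}_i}^s f\Vert_{L_p(\T^d)}$ dominates the $L_p(\T^{d-1})$-average of the slice moduli. For $1\le p<\infty$ this is false in general: the slice modulus carries its supremum over $\delta$ \emph{inside} the $x'$-integral, while the mixed modulus $\omega_{s{\rm e}_i}(f,M^{-j})_p$ carries it \emph{outside}, and the elementary inequality $\sup_\delta\int\le\int\sup_\delta$ runs in the opposite direction to the one you need (different slices may attain their worst increment at different $\delta$). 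The standard repair is to bypass slice moduli entirely: choose $J^{(i)}$ so that $f-J^{(i)}f=\int_{\T}K(t)\,\Delta_{t{\rm e}_i}^s f\,{\rm d}t$ for a generalized Jackson kernel $K$ with $\int K=1$ and $\int|K(t)|(1+m_i^j|t|)^s\,{\rm d}t\le C$; then Minkowski's integral inequality in all $d$ variables gives $\Vert f-J^{(i)}f\Vert_{L_p(\T^d)}\le\int|K(t)|\,\Vert\Delta_{t{\rm e}_i}^s f\Vert_{L_p(\T^d)}\,{\rm d}t$, and property (c) of the modulus, $\Vert\Delta_{t{\rm e}_i}^s f\Vert_p\le(1+m_i^j|t|)^s\,\omega_{s{\rm e}_i}(f,M^{-j})_p$, yields the desired bound. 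With step (1) justified this way the remainder of your argument goes through.
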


The next lemma provides the Nikol'skii--Stechkin--Riesz type inequality  (see, e.g.~\cite[p.~215]{Timan}).
\begin{lemma}\label{lemNS}
Let $1\le p\le \infty$, $s\in\N$, and $n\in \N$. Then, for any trigonometric polynomial $T_n(x)=\sum_{|k|\le n} c_k {\rm e}^{2\pi {\rm i} kx}$,
$x\in\T$, we have
$$
\| T_n^{(s)}  \|_{L_p(\T)}\le
\left(\frac{n}{2\sin\frac{n\delta}{2}} \right)^s \| \Delta_\delta^s  T_n\|_{L_p(\T)},\quad 0<\delta\le 1/n.
$$
\end{lemma}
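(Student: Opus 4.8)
The plan is to reduce everything to the classical Bernstein–Riesz interpolation formula for the derivative of a trigonometric polynomial and then insert the difference operator $\Delta_\delta^s$ in place of the derivative. First I would recall the Riesz interpolation identity: for any $T_n$ of degree $\le n$ (with respect to the period-$1$ normalization used here, so $T_n(x)=\sum_{|k|\le n}c_k{\rm e}^{2\pi{\rm i}kx}$), one has
\begin{equation*}
T_n'(x)=\sum_{\nu=1}^{2n}\gamma_\nu\, T_n\Big(x+\tfrac{\nu}{2n}-\tfrac12+\tfrac{1}{4n}\Big),
\end{equation*}
with explicitly known real coefficients $\gamma_\nu$ satisfying $\sum_\nu|\gamma_\nu|=n$ (this is the content of the classical Riesz proof of Bernstein's inequality; see~\cite{Timan}). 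Applying this $s$ times one gets that $T_n^{(s)}$ is a finite linear combination of translates of $T_n$ with coefficients whose absolute values sum to $n^s$. Because the $L_p(\T)$-norm is translation invariant and subadditive, this already yields $\|T_n^{(s)}\|_p\le n^s\|T_n\|_p$; the point of the present lemma is the sharper bound involving $\Delta_\delta^s T_n$ rather than $T_n$ itself.

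The key idea is that $\Delta_\delta^s T_n$ is again a trigonometric polynomial of the same degree, and its Fourier coefficients are those of $T_n$ multiplied by $({\rm e}^{2\pi{\rm i}k\delta}-1)^s$. On the frequency range $|k|\le n$, for $0<\delta\le 1/n$, the multiplier
\begin{equation*}
\lambda_k:=\Big(\frac{2\sin\frac{n\delta}{2}}{n}\Big)^s\cdot\frac{(2\pi{\rm i}k)^s}{({\rm e}^{2\pi{\rm i}k\delta}-1)^s}
\end{equation*}
is a well-behaved symbol: writing $({\rm e}^{2\pi{\rm i}k\delta}-1)^s=(2{\rm i}\sin\pi k\delta)^s{\rm e}^{\pi{\rm i}ks\delta}$, one sees that $\big|(2\pi{\rm i}k)^s/({\rm e}^{2\pi{\rm i}k\delta}-1)^s\big|=\big(\pi|k|/|\sin\pi k\delta|\big)^s$, and since $t\mapsto t/\sin t$ is increasing on $(0,\pi)$ and $\pi|k|\delta\le\pi$, this quantity is maximized at $|k|\delta$ as large as possible, i.e. it is $\le (n/(2\sin\frac{n\delta}{2}))^s\cdot 2^s/(2)^s$... more precisely the whole construction is arranged exactly so that applying the operator with symbol $\lambda_k$ to $\Delta_\delta^s T_n$ reproduces $(n/(2\sin\frac{n\delta}{2}))^{-s}\cdot(2\pi)^{-s}T_n^{(s)}$ up to the normalization. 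The cleanest route is therefore: show that the operator sending a polynomial of degree $\le n$ to the one with Fourier multiplier $\lambda_k$ above has $L_p(\T)\to L_p(\T)$ norm $\le 1$, which follows because it too can be realized as an average of translations (a discrete analogue of the Riesz formula, or equivalently by checking that $\lambda_k$ extends to the Fourier transform of a positive measure of total mass $1$ on this frequency band). Then $T_n^{(s)}$, which has multiplier $(2\pi{\rm i}k)^s$ relative to $\Delta_\delta^s T_n$ having multiplier $({\rm e}^{2\pi{\rm i}k\delta}-1)^s$, factors as this norm-$\le 1$ operator composed with multiplication by the scalar $(n/(2\sin\frac{n\delta}{2}))^s$, giving the claimed bound.

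The main obstacle is the verification that the relevant band-limited multiplier operator has operator norm exactly $1$ on $L_p(\T)$ for all $1\le p\le\infty$ simultaneously — one cannot use Plancherel or Marcinkiewicz-type multiplier theorems since those lose constants. The right tool is the representation of such operators as convolution against a (signed or positive) measure whose total variation is $1$, which for the Riesz-type kernels is a classical but delicate computation; I would either cite it directly from~\cite{Timan} in the one-variable form stated, or reconstruct it from the generalized Riesz interpolation formula for $\Delta_\delta^s T_n$, i.e. an identity expressing $T_n^{(s)}$ as a finite linear combination $\sum_\nu c_\nu(\delta)\,\Delta_\delta^s T_n(x+\tau_\nu)$ with $\sum_\nu|c_\nu(\delta)|=(n/(2\sin\frac{n\delta}{2}))^s$. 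Once that identity is in hand, the lemma is immediate from translation-invariance and the triangle inequality in $L_p(\T)$, with no dependence on $p$.
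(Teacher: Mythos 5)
The paper offers no proof of Lemma~\ref{lemNS} at all --- it simply cites \cite[p.~215]{Timan} --- so what you are really doing is reconstructing the classical Stechkin--Nikol'skii argument, and your architecture for that is the right one: the inequality is equivalent to showing that the band-limited comparison multiplier $\lambda_k=\bigl(\tfrac{2\sin(n\delta/2)}{n}\bigr)^s(2\pi{\rm i}k)^s/({\rm e}^{2\pi{\rm i}k\delta}-1)^s$, $|k|\le n$, extends to the Fourier--Stieltjes transform of a measure of total variation at most $1$, i.e.\ that $T_n^{(s)}$ is an $\ell_1$-combination of translates of $\Delta_\delta^s T_n$ with coefficient sum equal to the stated constant; translation invariance of $\Vert\cdot\Vert_{L_p(\T)}$ then gives the bound uniformly in $p$. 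The problem is that this representation \emph{is} the lemma, and you do not establish it: you call it ``a classical but delicate computation'' and offer to cite it or ``reconstruct'' it, while the only quantitative evidence you actually produce is a pointwise bound on $|\lambda_k|$, which controls nothing in $L_p$ for $p\ne 2$. As a self-contained proof the crux is therefore missing; as a citation your write-up adds nothing beyond what the paper already does.

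Worse, your own pointwise computation exposes a normalization inconsistency that you then paper over with the garbled step ``$\le (n/(2\sin\frac{n\delta}{2}))^s\cdot 2^s/(2)^s\dots$ more precisely the whole construction is arranged exactly so that\dots''. It is not so arranged. With the period-$1$ exponentials ${\rm e}^{2\pi{\rm i}kx}$ used here, $\bigl|(2\pi{\rm i}k)^s/({\rm e}^{2\pi{\rm i}k\delta}-1)^s\bigr|=(\pi|k|/|\sin\pi k\delta|)^s$, whose maximum over $|k|\le n$ is $(\pi n/\sin\pi n\delta)^s$, not $(n/(2\sin\frac{n\delta}{2}))^s$; the two agree only under the substitution appropriate to $2\pi$-periodic polynomials $\sum_{|k|\le n}c_k{\rm e}^{{\rm i}kx}$. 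In fact the lemma as literally stated is false: for $T_n(x)={\rm e}^{2\pi{\rm i}nx}$ and $\delta=1/n$ one has $\Delta_\delta^s T_n\equiv 0$ while $T_n^{(s)}\ne 0$. The statement is a verbatim transcription of Timan's $2\pi$-periodic version; in the paper's normalization the admissible range must be $0<\delta\le 1/(2n)$ and the constant $(\pi n/\sin(\pi n\delta))^s$. A correct treatment should first fix this normalization and only then prove (or precisely cite) the interpolation identity expressing $T_n^{(s)}$ through translates of $\Delta_\delta^s T_n$; without that identity, and with the constant left in the wrong scaling, the proposal does not yet constitute a proof.
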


Recall that the sequence $\Lambda=\{\lambda_k\}_{k\in \Z^d}$ is called a Fourier multiplier in $L_p$, $1\le p\le \infty$, if for every function $f\in L_p$,
$$
\sum_{k\in \Z^d} \lambda_k \h  f(k){\rm e}^{2\pi {\rm i}(k,x)}
$$
is the Fourier series of a certain function $\Lambda f \in L_p$ and
$$
\Vert \{\lambda_k\}_{k} \Vert_{\mathcal{M}_p}=\sup_{\Vert f\Vert_p\le 1} \Vert \Lambda f \Vert_p.
$$

In the next theorem and  below, we denote $v_\d(\xi)=v(\d^{-1}\xi)$,
where $v\in C^\infty (\R^d)$, $v(\xi)=1$ for $\xi\in [-1/4,1/4)^d$ and $v(\xi)=0$ for $\xi\not\in [-3/8,3/8)^d$.

\begin{theorem}\label{corMOD1DD--}
Let $1\le p\le\infty$, $1/p+1/q=1$, $s\in \N$, $\delta\in (0,1/2)$,  and $j\in \N$. Suppose that $\w\phi_j\in \mathcal{D}'$ and $\phi_j \in \mathcal{T}_{M^j}$ are such that
\begin{equation}\label{zvezda1}
  \h{\vp_j}(k)\h{\w\vp_j}(k)=1+\sum_{[\beta]=s}(M^{-j}k)^\beta \Gamma_{j,s}(k)\quad \text{for all}\quad k\in D(\delta M^j),
\end{equation}
where
\begin{equation}\label{zvezda1m}
\sup_j \Vert \{\Gamma_{j,s}(k) v_\delta(M^{-j}k)\}_{k}\Vert_{\mathcal{M}_p}<\infty.
\end{equation}
Then, for any $f\in  B_{\w\vp_j, p}$, we have
\begin{equation}\label{KS000+NNNN--}
\begin{split}
\Vert f - Q_j(f,\phi_j,\w\phi_j) \Vert_p\le C\bigg(\Omega_s(f,M^{-j})_p+K_{\phi_j,q} \w E_{\frac \d 2 M^j}({{\w\phi_j}}*f)_p\bigg);
\end{split}
\end{equation}
if, additionally, $\w\phi_j\in L_q$, then for any $f\in L_p$, we have
\begin{equation}\label{KS000+NNNN--+}
  \Vert f - Q_j(f,\phi_j,\w\phi_j) \Vert_p\le C (1+K_{\phi_j,q}\Vert \w\phi_j \Vert_{\mathcal{L}_{q,j}})\Omega_s(f,M^{-j})_p,
\end{equation}
where the constant $C$ does not depend on $f$ and $j$.
\end{theorem}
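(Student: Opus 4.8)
The plan is to invoke Lemma~\ref{thKS} (and Lemma~\ref{thKK} for the Kantorovich-type refinement) and then estimate the leading term $\Vert \psi_j * T_j \Vert_p$ using a carefully chosen near-best polynomial $T_j$ together with the multiplier hypothesis~\eqref{zvezda1m}. First I would apply the Jackson theorem, Lemma~\ref{lemJ}, with order $s$: it furnishes a polynomial $T_j \in \mathcal{T}_{M^j}$ such that $\Vert f - T_j \Vert_p \le C\sum_{i=1}^d \omega_{s{\rm e}_i}(f,M^{-j})_p \le C\,\Omega_s(f,M^{-j})_p$, where the last step uses~\eqref{eqMOD2} (or~\eqref{eqMOD1}); note this $T_j$ also serves as the near-best polynomial required in Lemma~\ref{thKS} since $E_{\d M^j}(f)_p \le \Omega_s(f,M^{-j})_p$ up to constants. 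The second ingredient is that the one-sided best approximation term is already of the right order: $\w E_{\frac{\d}{2}M^j}({\w\phi_j}*f)_p$ appears directly in~\eqref{KS000+NNNN--}, so nothing further is needed there — only in the $L_q$ case does one replace it, via Lemma~\ref{thKK}, by a multiple of $E_{\d M^j}(f)_p \le C\,\Omega_s(f,M^{-j})_p$, yielding~\eqref{KS000+NNNN--+}.

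The core of the argument is the estimate $\Vert \psi_j * T_j \Vert_p \le C\,\Omega_s(f,M^{-j})_p$. By the definition~\eqref{psi1} of $\psi_j$ and the compatibility condition~\eqref{zvezda1}, for the Fourier coefficients with frequencies $\ell \in D(\d M^j)$ we have $1 - \h{\phi_j}(\ell)\h{\w\phi_j}(\ell) = -\sum_{[\beta]=s}(M^{-j}\ell)^\beta \Gamma_{j,s}(\ell)$, while for $\ell \in D(M^j)\setminus D(\d M^j)$ the coefficient is simply $1 - \h{\phi_j}(\ell)\h{\w\phi_j}(\ell)$. I would split $T_j = V_{\d M^j}T_j + (T_j - V_{\d M^j}T_j)$ using a de la Vallée Poussin–type smooth cutoff at scale $\d M^j$ (this is why the hypothesis carries the factor $v_\d(M^{-j}k)$). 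For the low-frequency part $V_{\d M^j}T_j$, write $\psi_j * V_{\d M^j}T_j$ as a sum over $[\beta]=s$ of terms whose multiplier symbol is $-(M^{-j}\ell)^\beta \Gamma_{j,s}(\ell)v_\d(M^{-j}\ell)$ (the extra smooth cutoff $v_\d$ is harmless on $D(\d M^j)$ and makes the symbol well-defined on all of $\Z^d$). Factor this as $\Gamma_{j,s}(\ell)v_\d(M^{-j}\ell)$ composed with the symbol $(M^{-j}\ell)^\beta$; the first factor is a uniformly bounded multiplier by~\eqref{zvezda1m}, and the operator with symbol $(M^{-j}\ell)^\beta$ applied to $f$ produces (a constant times) a derivative-type quantity whose $L_p$-norm is controlled by the mixed modulus $\omega_{s{\rm e}_i}(f,M^{-j})_p$, hence by $\Omega_s(f,M^{-j})_p$ — here one uses the Nikol'skii–Stechkin–Riesz inequality (Lemma~\ref{lemNS}, applied coordinatewise) relating $\Vert T_j^{(\beta)}\Vert_p$ to differences of $T_j$, together with $\Vert \Delta^s_\delta T_j\Vert_p \le \Vert \Delta^s_\delta f\Vert_p + 2^s\Vert f - T_j\Vert_p \le C\,\Omega_s(f,M^{-j})_p$. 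For the high-frequency part $T_j - V_{\d M^j}T_j$, since $\psi_j * $ is a bounded operator there (the symbol $1-\h{\phi_j}\h{\w\phi_j}$ restricted to $D(M^j)$ is a finite multiplier with the de la Vallée Poussin bound~\eqref{ve}), it suffices to bound $\Vert T_j - V_{\d M^j}T_j\Vert_p \le \Vert f - T_j\Vert_p + \Vert f - V_{\d M^j}f\Vert_p + \Vert V_{\d M^j}(f - T_j)\Vert_p \le C\,E_{\frac{\d}{4}M^j}(f)_p \le C\,\Omega_s(f,M^{-j})_p$.

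The main obstacle is the treatment of the low-frequency term $\psi_j * V_{\d M^j}T_j$: one must turn the algebraic identity~\eqref{zvezda1} into a genuine $L_p$-estimate, and the delicate point is that $\Gamma_{j,s}(\ell)$ is only assumed to give a bounded multiplier after multiplication by the cutoff $v_\d(M^{-j}\ell)$, so the bookkeeping of which cutoff acts where must be done cleanly — one has to verify that inserting $v_\d$ does not disturb the coefficients on $D(\d M^j)$ (it does not, since $v_\d \equiv 1$ there up to the support conventions) and that the composition $(M^{-j}\ell)^\beta v_{\d}(M^{-j}\ell)$-type symbols applied to a polynomial of degree $\lesssim M^j$ are controlled by moduli of smoothness uniformly in $j$. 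Once this is arranged, combining the bound $\Vert\psi_j*T_j\Vert_p \le C\,\Omega_s(f,M^{-j})_p$ with Lemma~\ref{thKS} gives~\eqref{KS000+NNNN--} directly, and feeding the same $T_j$ into Lemma~\ref{thKK} together with $\Vert f-T_j\Vert_p \le C\,\Omega_s(f,M^{-j})_p$ gives~\eqref{KS000+NNNN--+}; the case $p=\infty$ is handled identically, replacing $\ell_{p,M^j}$-sums by suprema throughout.
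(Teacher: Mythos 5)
Your treatment of the low-frequency part is essentially the paper's argument: on frequencies where \eqref{zvezda1} holds you factor the symbol of $\psi_j$ as $\Gamma_{j,s}(k)v_\d(M^{-j}k)$ composed with $(M^{-j}k)^\b$, use \eqref{zvezda1m} for the first factor and the Nikol'skii--Stechkin--Riesz inequality (Lemma~\ref{lemNS}) for the second, landing on $\Omega_s(f,M^{-j})_p$. However, your choice of $T_j$ as a Jackson polynomial with spectrum in the full set $D(M^j)$ creates two genuine gaps. First, the hypothesis \eqref{zvezda1} constrains $1-\h{\vp_j}(k)\h{\w\vp_j}(k)$ only on $D(\d M^j)$ with $\d<1/2$; on $D(M^j)\setminus D(\d M^j)$ nothing at all is assumed, and since $\w\vp_j$ is merely a distribution its Fourier coefficients may grow polynomially there. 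Hence your claim that $\psi_j*{}$ acts boundedly on the high-frequency block $T_j-V_{\d M^j}T_j$ (you appeal to \eqref{ve}, which concerns the de la Vall\'ee Poussin operator, not $\psi_j$) is unfounded, and the term $\Vert \psi_j*(T_j-V_{\d M^j}T_j)\Vert_p$ cannot be controlled under the stated hypotheses. Second, the conclusion of Lemma~\ref{thKS} carries the term $K_{\phi_j,q}\Vert {\w\phi_j}*(f-T_j)\Vert_p$, which you never address; for a general $\w\vp_j\in\mathcal{D}'$, smallness of $\Vert f-T_j\Vert_p$ gives no control whatsoever of $\Vert {\w\phi_j}*(f-T_j)\Vert_p$ (think of $\w\vp_j$ a high-order differential operator).

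Both gaps are closed simultaneously by the paper's choice $T_j=V_{\d M^j}f$ in place of the Jackson polynomial: then $\operatorname{spec} T_j\subset \d M^j[-\tfrac38,\tfrac38)^d\subset D(\d M^j)$, so there simply is no high-frequency part and \eqref{zvezda1} applies to every frequency of $T_j$; moreover ${\w\phi_j}*V_{\d M^j}f=V_{\d M^j}({\w\phi_j}*f)$, whence
\begin{equation*}
\Vert {\w\phi_j}*(f-T_j)\Vert_p\le C E_{\frac\d2 M^j}({\w\phi_j}*f)_p\le C\,\w E_{\frac\d2 M^j}({\w\phi_j}*f)_p,
\end{equation*}
which is precisely the second term in \eqref{KS000+NNNN--}. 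The remaining term $E_{\frac\d2 M^j}(f)_p$ is absorbed into $\Omega_s(f,M^{-j})_p$ by Lemma~\ref{lemJ} together with \eqref{eqMOD2} and property (c) of the moduli, and your multiplier/NSR computation then applies verbatim to $\psi_j*V_{\d M^j}f$. For \eqref{KS000+NNNN--+} the second gap disappears (Lemma~\ref{thKK} has no $\w\phi_j*(f-T_j)$ term), but the first one persists with your $T_j$, so the same replacement is needed there as well.
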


\begin{proof}
To prove estimate~\eqref{KS000+NNNN--}, we will use the following slightly modified version of inequality~\eqref{KS000_c2}:
\begin{equation*}
       \Vert f - Q_j(f,\phi_j,\w\phi_j) \Vert_p\le C \Big(\Vert {\psi_j}*V_{\d M^j}f\Vert_{p}+E_{\frac \delta2 M^j}(f)_p+
     K_{\phi_j,q} \w E_{\frac\delta 2 M^j}({{\w\phi_j}}*f)_p\Big).
\end{equation*}
Thus, taking into account Lemma~\ref{lemJ} and relations~\eqref{eqMOD2}, we see that it is enough to show that
\begin{equation}\label{KKper0}
  \Vert {\psi_j}*V_{\d M^j}f\Vert_{p}\le C\Omega_s(f,M^{-j})_{{p}}.
\end{equation}

Using~\eqref{zvezda1}, \eqref{zvezda1m}, and Lemma~\ref{lemNS}, we derive
\begin{equation}\label{prr1}
  \begin{split}
     \Vert {\psi_j}*V_{\d M^j}f\Vert_{p}&\le \sum_{[\b]=s} \bigg\Vert \sum_{k} (M^{-j}k)^\b \Gamma_{j,s}(k) v_\d(M^{-j}k) \h f(k) {\rm e}^{2\pi \rm{i}(k,x)}\bigg\Vert_p\\
&\le C\sum_{[\b]=s} \bigg\Vert \sum_{k} (M^{-j}k)^\b  v(M^{-j}k) \h f(k) {\rm e}^{2\pi \rm{i}(k,x)}\bigg\Vert_p\\
&\le C\sum_{[\b]=s} \Big\Vert   \Delta_{\pi m_1^{-j}}^{\beta_1}\dots \Delta_{\pi m_d^{-j}}^{\beta_d}V_{M^j}f\Big\Vert_p\\
&\le C\Omega_s\(V_{M^j}f,M^{-j}\)_p.
  \end{split}
\end{equation}
Next, applying the properties of moduli of smoothness (a)--(c), inequality~\eqref{ve}, and Lemma~\ref{lemJ} along with~\eqref{eqMOD2}, we obtain
\begin{equation}\label{prr2}
  \begin{split}
      \Omega_s\(V_{M^j}f,M^{-j}\)_p &\le C\(2^s\Vert f-V_{M^j} f\Vert_p+\Omega_s(f,M^{-j})_p\)\\
      &\le C\Omega_s(f,M^{-j})_p.
   \end{split}
\end{equation}

Finally, combining~\eqref{prr1} and~\eqref{prr2}, we get~\eqref{KKper0}.

The proof of estimate~\eqref{KS000+NNNN--+} easily follows from Lemma~\ref{thKK}, Lemma~\ref{lemJ}, and inequality~\eqref{KKper0}.
\end{proof}


\subsubsection{Two-sided estimates of approximation and fractional smoothness}

Below, we will present some two-sided estimates of approximation by quasi-interpolation operators using fractional $K$-functionals and moduli of smoothness.

For our purposes, we will use the $K$-functional corresponding to the fractional Laplacian:
$$
\mathcal{K}_s^\Delta(f,M^{-1})_p:=\inf_g \{\Vert f-g\Vert_p+\Vert (-\Delta_{M^{-1}})^{s/2} g\Vert_p\},
$$
where
$$
(-\Delta_{M^{-1}})^{s/2} g(x)\sim \sum_{k\in \Z^d} |M^{-1} k|^s \h g(k){\rm e}^{2\pi{\rm i}(k,x)}.
$$

Recall that if $1<p<\infty$, $s>0$, and $M=\l I_d$, where $\l>1$ is integer, then the $K$-functional $\mathcal{K}_s^\Delta(f,M^{-1})_p$ is equivalent to the following fractional modulus of smoothness (see, e.g.,~\cite{Wil})
$$
\omega_s(f,\l^{-1})_p:=\sup_{|h|\le \l^{-1}} \bigg\Vert \sum_{l=0}^\infty (-1)^l \binom{s}{l}f(\cdot+h l)\bigg\Vert_p,
$$
i.e.,
\begin{equation}\label{KM}
  \mathcal{K}_s^\Delta(f,M^{-1})_p\asymp \omega_s(f,\l^{-1})_p,
\end{equation}
where $\asymp$ is a two-sided inequality with positive constants that do not depend on $f$ and $\l.$

\begin{theorem}\label{thfr1}
Let $1\le p\le\infty$, $1/p+1/q=1$, $s\in \N$, $\delta\in (0,1/2)$,  and $j\in \N$. Suppose that $\w\phi_j\in \mathcal{D}'$ and $\phi_j \in \mathcal{T}_{M^j}$ are such that
\begin{equation}\label{fr1}
   \sup_{j} \bigg\Vert \left\{   \frac{1-\h{\vp_j}(k)\h{\w \vp_j}(k)}{|M^{-j}k|^s}v_\d(M^{-j}k) \right\}_{k}\bigg\Vert_{\mathcal{M}_p}<\infty.
\end{equation}
Then, for any $f\in  B_{\w\vp_j, p}$, we have
\begin{equation}\label{fr2}
\begin{split}
\Vert f - Q_j(f,\phi_j,\w\phi_j) \Vert_p\le C\bigg(\mathcal{K}_s^\Delta(f,M^{-j})_p+K_{\phi_j,q} \w E_{\frac \d 2 M^j}({{\w\phi_j}}*f)_p\bigg);
\end{split}
\end{equation}
if, additionally, $\w\phi_j\in L_q$, then
\begin{equation}\label{fr2+}
\begin{split}
\Vert f - Q_j(f,\phi_j,\w\phi_j) \Vert_p\le C(1+K_{\phi_j,q}\Vert \w\phi_j \Vert_{\mathcal{L}_{q,j}})\mathcal{K}_s^\Delta(f,M^{-j})_p,
\end{split}
\end{equation}
where the constant $C$ does not depend on $f$ and $j$.
\end{theorem}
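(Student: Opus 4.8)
\textbf{Proof proposal for Theorem~\ref{thfr1}.}
The plan is to mimic the proof of Theorem~\ref{corMOD1DD--}, replacing the finite Taylor-type expansion~\eqref{zvezda1} and the anisotropic modulus $\Omega_s$ by the single hypothesis~\eqref{fr1} and the fractional $K$-functional. As in that proof, I would start from the modified version of~\eqref{KS000_c2} with the de la Vall\'ee Poussin means $V_{\d M^j}f$, namely
\begin{equation*}
  \Vert f - Q_j(f,\phi_j,\w\phi_j) \Vert_p\le C \Big(\Vert {\psi_j}*V_{\d M^j}f\Vert_{p}+E_{\frac \delta2 M^j}(f)_p+K_{\phi_j,q} \w E_{\frac\delta 2 M^j}({{\w\phi_j}}*f)_p\Big),
\end{equation*}
and likewise the $L_q$-version coming from Lemma~\ref{thKK}. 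Since $E_{\frac\d2 M^j}(f)_p\le \mathcal{K}_s^\Delta(f,M^{-j})_p$ follows from the Jackson inequality (Lemma~\ref{lemJ}, combined with~\eqref{eqMOD2} and standard estimates of $\omega_\beta$ by the fractional $K$-functional — or more directly by approximating the minimiser $g$ in the $K$-functional), the whole theorem reduces to the single multiplier estimate
\begin{equation}\label{planKKK}
  \Vert {\psi_j}*V_{\d M^j}f\Vert_{p}\le C\,\mathcal{K}_s^\Delta(f,M^{-j})_p.
\end{equation}

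To prove~\eqref{planKKK} I would split the argument using the definition of the $K$-functional: fix $g$ with $\Vert f-g\Vert_p+\Vert(-\Delta_{M^{-j}})^{s/2}g\Vert_p\le 2\mathcal{K}_s^\Delta(f,M^{-j})_p$. For the $f-g$ part, note that $\psi_j*V_{\d M^j}$ is a Fourier multiplier whose symbol is $(1-\h{\vp_j}(k)\h{\w\vp_j}(k))v_\d(M^{-j}k)$ on $D(M^j)$; writing this symbol as the product
$$
\frac{1-\h{\vp_j}(k)\h{\w\vp_j}(k)}{|M^{-j}k|^s}v_\d(M^{-j}k)\ \cdot\ |M^{-j}k|^s v_\d(M^{-j}k),
$$
the first factor has uniformly bounded multiplier norm by hypothesis~\eqref{fr1}, and the second factor, $|M^{-j}k|^s v_\d(M^{-j}k)$, is (after the natural rescaling $k\mapsto M^{-j}k$) a fixed compactly supported $C^\infty$ function of $M^{-j}k$, hence a Fourier multiplier with norm bounded independently of $j$ by a Mikhlin/H\"ormann-type criterion (or by the periodic transference results of the type already invoked via Lemma~\ref{mult} in the paper). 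Thus $\Vert\psi_j*V_{\d M^j}(f-g)\Vert_p\le C\Vert f-g\Vert_p$. For the $g$ part, I would instead factor the symbol as
$$
\Big(\frac{1-\h{\vp_j}(k)\h{\w\vp_j}(k)}{|M^{-j}k|^s}v_\d(M^{-j}k)\Big)\cdot |M^{-j}k|^s,
$$
so that applying the first (uniformly bounded) multiplier to $(-\Delta_{M^{-j}})^{s/2}g$ gives $\Vert\psi_j*V_{\d M^j}g\Vert_p\le C\Vert(-\Delta_{M^{-j}})^{s/2}g\Vert_p$; here one should be slightly careful that $v_\d(M^{-j}k)$ truncates to $D(M^j)$ so the operator is well defined on all of $L_p$, which is harmless. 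Adding the two pieces and taking the infimum over $g$ yields~\eqref{planKKK}, and hence~\eqref{fr2} and~\eqref{fr2+}.

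The main obstacle I anticipate is the clean justification that the rescaled symbol $|M^{-j}k|^s v_\d(M^{-j}k)$ (and its reciprocal-type companions appearing in the factorisations) are periodic Fourier multipliers on $L_p(\T^d)$ with constants uniform in $j$ — i.e.\ transferring a fixed smooth compactly supported multiplier on $\R^d$ to the dilated lattices $M^{-j}\Z^d$. This is routine in spirit (it is exactly the mechanism behind the $\eta^\chi$ computation after Example~\ref{e2} and behind Lemma~\ref{mult}), but it must be stated with the correct uniformity, especially at the endpoints $p=1,\infty$ where one cannot invoke Mikhlin directly and instead needs that the inverse Fourier transform of the symbol is an $L_1(\R^d)$ function with norm independent of $j$. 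Once that transference lemma is in place, the rest is the bookkeeping of the two factorisations above together with the already-established reductions.
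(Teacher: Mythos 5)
Your proposal is correct and follows essentially the same route as the paper: the paper likewise reduces everything to the bound $\Vert\psi_j*V_{\d M^j}f\Vert_p\le C\,\mathcal{K}_s^\Delta(f,M^{-j})_p$, factors the symbol through the uniformly bounded multiplier in~\eqref{fr1} together with the transference fact~\eqref{mmm} justified by Lemma~\ref{mult}, and splits via a near-minimizer $g$ of the $K$-functional --- the only difference being that the paper first passes to $\Vert(-\Delta_{M^{-j}})^{s/2}V_{M^j}f\Vert_p$ and then writes $f=(f-g)+g$, whereas you split first and use two factorisations of the symbol. One cosmetic remark: in your factorisation for the $f-g$ piece the second factor should be $|M^{-j}k|^s v(M^{-j}k)$ rather than $|M^{-j}k|^s v_\d(M^{-j}k)$ (since $v_\d^2\ne v_\d$ while $v\,v_\d=v_\d$ for $\d<1/2$), which is exactly the multiplier covered by~\eqref{mmm}.
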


\begin{proof}[Proof] As in the proof of Theorems~\ref{corMOD1DD--}, it is sufficient to show that
\begin{equation}\label{fr4}
  \Vert {\psi_j}*V_{\d M^j}f\Vert_{p}\le C \mathcal{K}_s^\Delta(f,M^{-j})_p.
\end{equation}

Using condition~\eqref{fr1}, we derive
\begin{equation}\label{fr5}
  \begin{split}
     \Vert {\psi_j}*V_{\d M^j} f\Vert_{p}&=\bigg\Vert \sum_{k}    \frac{1-\h{\vp_j}(k)\h{\w \vp_j}(k)}{|M^{-j}k|^s} v_\d(M^{-j}k)v(M^{-j}k) |M^{-j}k|^s \h f(k) {\rm e}^{2\pi \rm{i}(k,x)}\bigg\Vert_p\\
&\le C \bigg\Vert \sum_{k} v(M^{-j}k) |M^{-j}k|^s \h f(k) {\rm e}^{2\pi \rm{i}(k,x)}\bigg\Vert_p\\
&=C\Vert (-\Delta_{M^{-j}})^{s/2}V_{M^j} f\Vert_p.
  \end{split}
\end{equation}
Next, taking into account the fact that
\begin{equation}\label{mmm}
  \sup_j\Vert \{v(M^{-j}k) |M^{-j}k|^s\}_{k} \Vert_{\mathcal{M}_p}<\infty\quad\text{for every}\quad s\ge 0
\end{equation}
(see Lemma~\ref{mult} below)
and choosing a function $g$ such that
$$
\Vert f-g\Vert_p+\Vert (-\Delta_{M^{-j}})^{s/2} g\Vert_p\le 2 \mathcal{K}_s^\Delta(f,M^{-j})_p,
$$
we obtain
\begin{equation}\label{fr6}
  \begin{split}
     \Vert (-\Delta_{M^{-j}})^{s/2}V_{M^j} f\Vert_p&\le \Vert (-\Delta_{M^{-j}})^{s/2}V_{M^j}(f-g)\Vert_p
     +\Vert (-\Delta_{M^{-j}})^{s/2}V_{M^j} g\Vert_p\\
&\le C\Vert f-g\Vert_p+\Big\Vert V_{M^j}\((-\Delta_{M^{-j}})^{s/2} g\)\Big\Vert_p\\
&\le C\(\Vert f-g\Vert_p+\Vert (-\Delta_{M^{-j}})^{s/2} g\Vert_p\)\le C\mathcal{K}_s^\Delta(f,M^{-j})_p.
  \end{split}
\end{equation}
Thus, combining~\eqref{fr5} and~\eqref{fr6}, we get~\eqref{fr4}. This implies that inequalities~\eqref{fr2} and~\eqref{fr2+} are valid.
\end{proof}


Now we consider the estimates from below.

\begin{theorem}\label{thfr1b}
Let $1\le p\le\infty$, $1/p+1/q=1$, $s>0$, $\delta\in (0,1/2)$,  and $j\in \N$. Suppose that $\w\phi_j\in \mathcal{D}'$ and $\phi_j \in \mathcal{T}_{M^j}$ are such that
\begin{equation}\label{fr1b}
   \sup_{j} \bigg\Vert \left\{   \frac{|M^{-j}k|^s}{1-\h{\vp_j}(k)\h{\w \vp_j}(k)}v_{1/\d}(M^{-j}k) \right\}_{k}\bigg\Vert_{\mathcal{M}_p}<\infty.
\end{equation}
Then, for any $f\in  B_{\w\vp_j, p}$, we have
\begin{equation}\label{fr2b}
\begin{split}
\mathcal{K}_s^\Delta(f,M^{-j})_p\le C\(\Vert f - Q_j(f,\phi_j,\w\phi_j) \Vert_p+E_{\frac12 M^j}(f)_p+K_{\phi_j,q} \w E_{\frac12M^j}({{\w\phi_j}}*f)_p\);
\end{split}
\end{equation}
if, additionally, $\w\phi_j\in L_q$, then for any $f\in L_p$, we have
\begin{equation}\label{fr3b}
\begin{split}
\mathcal{K}_s^\Delta(f,M^{-j})_p\le C(1+K_{\phi_j,q}\Vert \w\phi_j \Vert_{\mathcal{L}_{q,j}})\Vert f - Q_j(f,\phi_j,\w\phi_j) \Vert_p,
\end{split}
\end{equation}
where the constant $C$ does not depend on $f$ and $j$.
\end{theorem}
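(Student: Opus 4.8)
The plan is to run the argument of Theorems~\ref{corMOD1DD--} and~\ref{thfr1} in the opposite direction: rather than bounding $\Vert\psi_j*(\cdot)\Vert_p$ from above by smoothness, I would bound $\mathcal{K}_s^\Delta(f,M^{-j})_p$ from above by $\Vert\psi_j*T_j\Vert_p$ for a suitable near-best polynomial $T_j\in\mathcal{T}_{M^j}$, and then bound $\Vert\psi_j*T_j\Vert_p$ from above by $\Vert f-Q_j(f,\vp_j,\w\vp_j)\Vert_p$ together with best- and best one-sided approximation remainders. Concretely, fix $T_j\in\mathcal{T}_{M^j}$ with $\Vert f-T_j\Vert_p\le c(d,p)E_{\frac12 M^j}(f)_p$: for~\eqref{fr2b} take $T_j=V_{M^j}f$, while for~\eqref{fr3b} take $T_j$ to be a best $L_p$-approximant from $\mathcal{T}_{M^j}$, so that $\Vert f-T_j\Vert_p=E_{M^j}(f)_p$. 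Using $T_j$ as a competitor in the definition of $\mathcal{K}_s^\Delta$ gives
\begin{equation*}
  \mathcal{K}_s^\Delta(f,M^{-j})_p\le \Vert f-T_j\Vert_p+\Vert(-\Delta_{M^{-j}})^{s/2}T_j\Vert_p ,
\end{equation*}
so it suffices to estimate the second summand.

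To this end I use~\eqref{fr1b}. Since $\d<1/2$, for every $k\in D(M^j)$ one has $\d M^{-j}k\in[-\d/2,\d/2)^d\subset[-1/4,1/4)^d$, hence $v_{1/\d}(M^{-j}k)=1$; therefore on the spectrum of $T_j$, which is contained in $D(M^j)$, the sequence in~\eqref{fr1b} coincides with $\{|M^{-j}k|^s/(1-\h{\vp_j}(k)\h{\w\vp_j}(k))\}_k$. (In particular~\eqref{fr1b} forces $1-\h{\vp_j}(k)\h{\w\vp_j}(k)\ne 0$ for $0\ne k\in D(M^j)$, so this ratio is well defined, with value $0$ at $k=0$.) Let $\Lambda_j$ be the Fourier multiplier with symbol $\{|M^{-j}k|^s(1-\h{\vp_j}(k)\h{\w\vp_j}(k))^{-1}v_{1/\d}(M^{-j}k)\}_k$; comparing Fourier coefficients termwise yields the identity $(-\Delta_{M^{-j}})^{s/2}T_j=\Lambda_j(\psi_j*T_j)$, whence, by~\eqref{fr1b},
\begin{equation*}
  \Vert(-\Delta_{M^{-j}})^{s/2}T_j\Vert_p\le \Big(\sup_j\Vert\Lambda_j\Vert_{\mathcal{M}_p}\Big)\Vert\psi_j*T_j\Vert_p\le C\Vert\psi_j*T_j\Vert_p .
\end{equation*}

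It remains to control $\Vert\psi_j*T_j\Vert_p$. Rearranging the splitting from the proof of Lemma~\ref{thKS} (see~\eqref{I123} and~\eqref{zvezda1ra}, which apply because $T_j\in\mathcal{T}_{M^j}$) gives the identity
\begin{equation*}
  \psi_j*T_j=\big(f-Q_j(f,\vp_j,\w\vp_j)\big)-(f-T_j)+\frac1{m^j}\sum_{k\in D(M^j)}\w\vp_j*(f-T_j)(M^{-j}k)\,\vp_j(\cdot-M^{-j}k) ,
\end{equation*}
so that $\Vert\psi_j*T_j\Vert_p\le\Vert f-Q_j\Vert_p+\Vert f-T_j\Vert_p+I_3$, where $I_3$ is exactly the quantity estimated in~\eqref{I3} (general $\w\vp_j$) and in~\eqref{zvezzvez2} ($\w\vp_j\in L_q$). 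For~\eqref{fr2b}, with $T_j=V_{M^j}f$ one has $\w\vp_j*(f-V_{M^j}f)=\w\vp_j*f-V_{M^j}(\w\vp_j*f)$, so by~\eqref{ve} its norm is at most $CE_{\frac12 M^j}(\w\vp_j*f)_p\le C\w E_{\frac12 M^j}(\w\vp_j*f)_p$; combining this with~\eqref{I3}, with $\w E_{M^j}(\w\vp_j*f)_p\le\w E_{\frac12 M^j}(\w\vp_j*f)_p$, and with $\Vert f-V_{M^j}f\Vert_p\le CE_{\frac12 M^j}(f)_p$, the two displays above give~\eqref{fr2b}. For~\eqref{fr3b}, with $T_j$ a best approximant from $\mathcal{T}_{M^j}$, \eqref{zvezzvez2} gives $I_3\le CK_{\vp_j,q}\Vert\w\vp_j\Vert_{\mathcal{L}_{q,j}}E_{M^j}(f)_p$, hence $\Vert\psi_j*T_j\Vert_p\le\Vert f-Q_j\Vert_p+(1+CK_{\vp_j,q}\Vert\w\vp_j\Vert_{\mathcal{L}_{q,j}})E_{M^j}(f)_p$; since $\vp_j\in\mathcal{T}_{M^j}$ implies $Q_j(f,\vp_j,\w\vp_j)\in\mathcal{T}_{M^j}$, we have $E_{M^j}(f)_p\le\Vert f-Q_j\Vert_p$, and substituting back into the first two displays yields~\eqref{fr3b}.

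The main obstacle is the bookkeeping of the remainder terms rather than any single hard estimate. One must verify that~\eqref{fr1b} legitimizes the division by $1-\h{\vp_j}(k)\h{\w\vp_j}(k)$ precisely on the spectrum of $T_j$ (the $v_{1/\d}$-cutoff being harmless there because $\d<1/2$), and one must choose $T_j$ differently in the two cases so that $\Vert f-T_j\Vert_p$, $\Vert\w\vp_j*(f-T_j)\Vert_p$, and $I_3$ collapse to exactly the right-hand sides of~\eqref{fr2b} and~\eqref{fr3b}; the sharp form~\eqref{fr3b} in particular hinges on absorbing $E_{M^j}(f)_p$ into $\Vert f-Q_j\Vert_p$, which is possible only because $Q_j(f,\vp_j,\w\vp_j)$ is itself a trigonometric polynomial in $\mathcal{T}_{M^j}$.
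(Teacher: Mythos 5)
Your proof is correct, and its core mechanism is the same as the paper's: condition \eqref{fr1b} is used as a Fourier multiplier turning $\psi_j*T_j$ into $(-\Delta_{M^{-j}})^{s/2}T_j$ (legitimately, since $v_{1/\d}(M^{-j}k)=1$ on $D(M^j)$ for $\d<1/2$), the identity $\psi_j*T_j=T_j-Q_j(T_j,\vp_j,\w\vp_j)$ from \eqref{zvezda1ra} is rearranged into the three-term splitting involving $f-Q_j(f,\vp_j,\w\vp_j)$, $f-T_j$ and $Q_j(f-T_j,\vp_j,\w\vp_j)$, and the last term is handled by \eqref{I3} resp. \eqref{zvezzvez2}. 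The two deviations are harmless and arguably tidier: you insert $T_j$ itself as the competitor in the $K$-functional, whereas the paper inserts $Q_j(f,\vp_j,\w\vp_j)$ and then needs the additional Bernstein-type multiplier bound \eqref{mmm} to control $(-\Delta_{M^{-j}})^{s/2}(Q_j(f,\vp_j,\w\vp_j)-T_j)$, which your route avoids; and for \eqref{fr3b} you take a best approximant and absorb $E_{M^j}(f)_p$ via the observation $Q_j(f,\vp_j,\w\vp_j)\in\mathcal{T}_{M^j}$, whereas the paper simply chooses $T_j=Q_j(f,\vp_j,\w\vp_j)$, which kills the term $\Vert f-T_j\Vert_p$ outright. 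Both choices yield exactly \eqref{fr2b} and \eqref{fr3b}, so there is no gap.
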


\begin{remark}\label{rem1-}
If in Theorem~\ref{thfr1b} instead of~\eqref{fr1b}, we suppose that
\begin{equation*}
   \sup_{j} \bigg\Vert \left\{   \frac{|M^{-j}k|^s}{1-\h{\vp_j}(k)\h{\w \vp_j}(k)}\chi_{D(M^j)}(k)\right\}_{k}\bigg\Vert_{\mathcal{M}_p}<\infty,
\end{equation*}
then, for any $f\in  B_{\w\vp_j, p}$, $1<p<\infty$, we have
\begin{equation*}
\begin{split}
\mathcal{K}_s^\Delta(f,M^{-j})_p\le C\(\Vert f - Q_j(f,\phi_j,\w\phi_j) \Vert_p+K_{\phi_j,q} \w E_{M^j}({{\w\phi_j}}*f)_p\).
\end{split}
\end{equation*}
This follows from the proof of Theorem~\ref{thfr1b} presented below and Corollary~\ref{cor1thKS} a).
\end{remark}

\begin{remark}\label{rem2}
If  $d=1$ and in conditions~\eqref{fr1} or~\eqref{fr1b} we replace $|M^{-j}k|^s$ with $({\rm i}M^{-j}k)^s$, $M>1$, then for any $f\in L_p$, $1\le p\le \infty$, and $s>0$,
the $K$-functional  $\mathcal{K}_s^\Delta(f,M^{-j})_p$ can be replaced with the fractional modulus of smoothness ${\omega}_s(f,M^{-j})_p$. This easily follows from the proofs of Theorems~\ref{thfr1} and~\ref{thfr1b} and the fact that for any $f\in L_p(\T)$ and $s>0$ (see, e.g.,~\cite{BDGS77})
$$
{\omega}_s(f,t)_p\asymp \inf_g \(\Vert f-g\Vert_p+t^s \Vert g^{(s)}\Vert_p\),
$$
where $\asymp$ is a two-sided inequality with positive constants that do not depend on $f$ and $t$.
\end{remark}

\begin{proof}[Proof of Theorem~\ref{thfr1b}]
By the definition of the $K$-functional, we derive
\begin{equation}\label{fr4b}
  \begin{split}
     \mathcal{K}_s^\Delta(f,M^{-j})_p\le \Vert f-Q_j(f,\vp_j,\w\vp_j)\Vert_p+\Vert (-\Delta_{M^{-j}})^{s/2} Q_j(f,\vp_j,\w\vp_j)\Vert_p.
  \end{split}
\end{equation}
Let $T_j\in \mathcal{T}_{M^j}$ be some trigonometric polynomial that will be chosen later. Taking into account condition~\eqref{fr1b} and using~\eqref{mmm} and equality~\eqref{zvezda1ra}, we obtain
\begin{equation}\label{fr5b}
  \begin{split}
    \Vert (-&\Delta_{M^{-j}})^{s/2} Q_j(f,\vp_j,\w\vp_j)\Vert_p\\
&\le \Vert (-\Delta_{M^{-j}})^{s/2} \(Q_j(f,\vp_j,\w\vp_j)-T_j\)\Vert_p+\Vert (-\Delta_{M^{-j}})^{s/2} T_j\Vert_p\\
&\le C \(\Vert Q_j(f,\vp_j,\w\vp_j)-T_j\Vert_p+\Vert {\psi_j}*T_j\Vert_p\)\\
&= C \(\Vert Q_j(f,\vp_j,\w\vp_j)-T_j\Vert_p+\Vert Q_j(T_j,\vp_j,\w\vp_j)-T_j\Vert_p\)\\
&\le C \(\Vert f-Q_j(f,\vp_j,\w\vp_j)\Vert_p+\Vert f-T_j\Vert_p+\Vert Q_j(f-T_j,\vp_j,\w\vp_j)\Vert_p\).\\
  \end{split}
\end{equation}

Now, to prove inequality~\eqref{fr2b}, we choose $T_j=V_{M^j} f$. Then, applying estimates~\eqref{I3} and~\eqref{zvezzvez}, we derive
\begin{equation}\label{fr6b}
  \begin{split}
    \Vert Q_j(f-T_j,\vp_j,\w\vp_j)\Vert_p&\le  C K_{\phi_j,q} \(\w E_{M^j}({{\w\phi_j}}*f)_p+\Vert {{\w\phi_j}}*(f -T_j)\Vert_p\)\\
&\le  C K_{\phi_j,q} \(\w E_{M^j}({{\w\phi_j}}*f)_p+\w E_{\frac12M^j}({{\w\phi_j}}*f)_p\)\\
&\le C K_{\phi_j,q} \w E_{\frac12M^j}({{\w\phi_j}}*f)_p.
  \end{split}
\end{equation}
Using also estimate~\eqref{ve}, we see that inequalities~\eqref{fr6b} and~\eqref{fr5b} imply that
\begin{equation}\label{fr7b}
\begin{split}
     \Vert (-&\Delta_{M^{-j}})^{s/2} Q_j(f,\vp_j,\w\vp_j)\Vert_p\\
&\le C\(\Vert f-Q_j(f,\vp_j,\w\vp_j)\Vert_p+E_{\frac12 M^j}(f)_p+K_{\phi_j,q} \w E_{\frac12M^j}({{\w\phi_j}}*f)_p\).
\end{split}
\end{equation}
Combining~\eqref{fr4b} and~\eqref{fr7b}, we get~\eqref{fr2b}.

To prove inequality~\eqref{fr3b}, it is enough to set $T_j=Q_j(f,\vp_j,\w\vp_j)$ and take into account that by~\eqref{fr5b} and~\eqref{zvezzvez2}, we have
\begin{equation*}
  \Vert (-\Delta_{M^{-j}})^{s/2} Q_j(f,\vp_j,\w\vp_j)\Vert_p\le C(1+K_{\phi_j,q}\Vert \w\phi_j \Vert_{\mathcal{L}_{q,j}})\Vert f - Q_j(f,\phi_j,\w\phi_j) \Vert_p,
\end{equation*}
which together with~\eqref{fr4b}  implies~\eqref{fr3b}.
\end{proof}


In the next results, we deal with functions/distributions $\phi_j$ and $\w\phi_j$ having the following special form:
\begin{equation}\label{pipi}
  \vp_j(x)\sim\sum_{k\in \Z^d} \Phi(M^{-j}k){\rm e}^{2\pi {\rm i}(k,x)}, \quad
\w\vp_j(x)\sim\sum_{k\in \Z^d} \w\Phi(M^{-j}k){\rm e}^{2\pi {\rm i}(k,x)},
\end{equation}
where $\Phi, \,\w\Phi\,:\, \R^d\to \mathbb{C}$ are appropriate functions, which will be specified below.
Actually, most of the quasi-interpolation operators~\eqref{oper} are defined by means of functions/distributions $\vp_j$ and  $\w\vp_j$ given by~\eqref{pipi}. Below, we would like to give a version of Theorem~\ref{corMOD1DD--}, in which the conditions on $\vp_j$ and  $\w\vp_j$ are given only in terms of some simple smoothness properties of the functions $\Phi$ and $\w\Phi$.

For our purposes, we need to recall some facts about Fourier multipliers on $L_p(\R^d)$. First, we recall that a bounded function $\mu \,:\,\R^d\to \C$ is called a Fourier multiplier on $L_p(\R^d)$, $1\le p\le \infty$ (we will write $\mu \in \mathcal{M}_p(\R^d)$), if the operator $T_\mu$ defined by 
$$
\mathcal{F}(T_\mu f)=\mu \mathcal{F}(f),\quad f\in L_p(\R^d)\cap L_2(\R^d),
$$
is bounded on $L_p(\R^d)$, i.e., there exists a constant $C$ such that $\|T_\mu f\|_{L_p(\R^d)}\le C\|f\|_{L_p(\R^d)}$. The norm of the Fourier multiplier $\mu$ is given by
$$
\Vert \mu\Vert_{\mathcal{M}_p(\R^d)}=\sup_{\Vert f\Vert_{L_p(\R^d)}\le 1} \Vert T_\mu f\Vert_{L_p(\R^d)}.
$$

We will use the following basic properties of Fourier multipliers on $L_p(\R^d)$:

\begin{lemma}\label{mult}
a) If $\mu \in \mathcal{M}_p(\R^d)$, $1\le p\le \infty$, and $\mu(t)$ is continuous at the points $t\in \Z^d$, then, for any dilation matrix $M$ and $j\in \N$, the sequence $\{\mu(M^{-j}k)\}_{k\in \Z^d}$ is a bounded Fourier multiplier in the space $L_p(\T^d)$ and
$$
\sup_j \Vert \{\mu(M^{-j}k)\}_{k}\Vert_{\mathcal{M}_p}\le c(p,d) \Vert \mu\Vert_{\mathcal{M}_p(\R^d)}.
$$

b) Suppose that the function $\mu$ belongs to $C(\R^d)$ and has a compact support. If $\mu \in W_s^d(\R^d)$ for some $s>1$, or more generally $\mathcal{F}(\mu) \in L_1(\R^d)$, then $\mu \in \mathcal{M}_p(\R^d)$ for all $1\le p\le \infty$.
\end{lemma}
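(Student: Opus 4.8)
The plan is to handle the two parts separately, reducing each to a classical fact. \textbf{Part a).} First I would reduce to the case $j=0$. The $L_p(\rd)$-multiplier norm is invariant under the substitution $\xi\mapsto M^{-j}\xi$: conjugating $T_\mu$ by the dilation $f\mapsto f(M^{j}\cdot)$, which is an $L_p(\rd)$-isometry up to the Jacobian factor $m^{-j/p}$ that cancels, shows $\mu(M^{-j}\cdot)\in\mathcal{M}_p(\rd)$ with $\|\mu(M^{-j}\cdot)\|_{\mathcal{M}_p(\rd)}=\|\mu\|_{\mathcal{M}_p(\rd)}$. Hence it suffices to prove that the restriction $\{\nu(k)\}_{k\in\zd}$ to the integer lattice of a $\nu\in\mathcal{M}_p(\rd)$ that is continuous at the points of $\zd$ is a Fourier multiplier on $L_p(\td)$ with $\|\{\nu(k)\}_k\|_{\mathcal{M}_p}\le c(p,d)\|\nu\|_{\mathcal{M}_p(\rd)}$, and then to apply this with $\nu=\mu(M^{-j}\cdot)$; note that continuity of $\nu$ at $\zd$ means continuity of $\mu$ at the points of $M^{-j}\zd$, which holds automatically in all applications in this paper, where $\mu$ is continuous on all of $\rd$.

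The displayed lattice-restriction statement is the classical de Leeuw-type transference theorem, which I would reprove by regularization. Fix $g\in\mathcal{S}(\rd)$ with $g\ge 0$ and $\int_{\rd}g^p=1$, let $P(x)=\sum_{k\in F}c_k\,{\rm e}^{2\pi{\rm i}(k,x)}$ be an arbitrary trigonometric polynomial ($F\subset\zd$ finite), and set $f_\e(x)=P(x)g(\e x)$, so that $\mathcal{F}(f_\e)=\sum_{k\in F}c_k\e^{-d}\mathcal{F}(g)((\cdot-k)/\e)$ concentrates at the lattice points of $F$ as $\e\to 0$. Using the continuity of $\nu$ at these finitely many points one obtains $\|T_\nu f_\e-(\Lambda_\nu P)\,g(\e\cdot)\|_{L_p(\rd)}=o(\e^{-d/p})$, where $\Lambda_\nu P:=\sum_{k\in F}\nu(k)c_k\,{\rm e}^{2\pi{\rm i}(k,x)}$. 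Combining this with the equidistribution identity $\e^{d/p}\|h\,g(\e\cdot)\|_{L_p(\rd)}\to\|h\|_{L_p(\td)}$, valid for every continuous $1$-periodic $h$ (a Riemann-sum computation, using $\int_{\rd}g^p=1$), and with the bound $\|T_\nu f_\e\|_{L_p(\rd)}\le\|\nu\|_{\mathcal{M}_p(\rd)}\|f_\e\|_{L_p(\rd)}$, letting $\e\to 0$ yields $\|\Lambda_\nu P\|_{L_p(\td)}\le\|\nu\|_{\mathcal{M}_p(\rd)}\|P\|_{L_p(\td)}$. Density of trigonometric polynomials settles $1\le p<\infty$, and $p=\infty$ follows by duality from $p=1$.

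\textbf{Part b).} I would reduce everything to the assertion $\mathcal{F}(\mu)\in L_1(\rd)$: granted this, $T_\mu f=\mathcal{F}^{-1}(\mu)*f$ for $f\in L_p(\rd)\cap L_2(\rd)$, so Young's convolution inequality gives $\|T_\mu f\|_{L_p(\rd)}\le\|\mathcal{F}^{-1}(\mu)\|_{L_1(\rd)}\|f\|_{L_p(\rd)}$ simultaneously for all $1\le p\le\infty$, i.e.\ $\mu\in\mathcal{M}_p(\rd)$ with norm at most $\|\mathcal{F}^{-1}(\mu)\|_{L_1(\rd)}$. To obtain $\mathcal{F}(\mu)\in L_1(\rd)$ I would split $\rd=\{|\xi|\le 1\}\cup\{|\xi|>1\}$: since $\mu\in C(\rd)$ has compact support, $\mu\in L_1(\rd)$, hence $\mathcal{F}(\mu)\in L_\infty(\rd)$ and the integral over $\{|\xi|\le 1\}$ is finite; on $\{|\xi|>1\}$ the smoothness hypothesis $\mu\in W_s^d(\rd)$ endows $\mathcal{F}(\mu)$ with polynomial decay — the relevant derivatives (respectively frequency-weighted transform) of $\mu$ are compactly supported and integrable, so Hausdorff--Young yields $|\cdot|^{d}\mathcal{F}(\mu)\in L_r(\rd)$ for a suitable finite $r$ — and Hölder's inequality against the weight $|\xi|^{-d}$, whose relevant power is integrable over $\{|\xi|>1\}$ precisely because $s>1$, closes up to give $\mathcal{F}(\mu)\in L_1(\rd)$. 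This is the standard sufficient condition for membership in $\bigcap_{1\le p\le\infty}\mathcal{M}_p(\rd)$.

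\textbf{Main obstacle.} The one genuinely delicate point is the regularization limit in part a): estimating $T_\nu f_\e-(\Lambda_\nu P)\,g(\e\cdot)$ in $L_p(\rd)$ using only pointwise continuity of $\nu$ at finitely many lattice points — which requires controlling the mass of $\mathcal{F}(f_\e)$ away from those points — together with the equidistribution identity; and for $p=\infty$ the extra wrinkle that boundedness of $T_\nu f_\e$ on $L_\infty$ is not directly available, so one passes through $p=1$ by duality. Part b) is routine once the reduction to $\mathcal{F}(\mu)\in L_1(\rd)$ is made.
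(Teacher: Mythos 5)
Your proof is correct and follows essentially the same route as the paper, which disposes of part a) by citing de Leeuw's transference theorem together with the affine invariance of the $\mathcal{M}_p(\R^d)$-norm, and of part b) by citing the standard sufficient condition $\mathcal{F}(\mu)\in L_1(\R^d)$; you simply supply proofs of the cited facts (the regularization argument for de Leeuw, and the Hausdorff--Young/H\"older argument for the Wiener-algebra criterion), and your observation that the hypothesis really requires continuity of $\mu$ at the points of $M^{-j}\Z^d$ is a fair reading of a slight imprecision in the statement. No gaps.
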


\begin{proof}
a) This assertion follows from the well-known de Leeuw theorem (see~\cite{deL}) and the fact that for every affine transformation $l\,:\, \R^d\to \R^d$, we have $\Vert \mu\circ l\Vert_{\mathcal{M}_p(\R^d)}=\Vert \mu\Vert_{\mathcal{M}_p(\R^d)}$ (see, e.g.,~\cite[p.~147]{Gr}).

b)  The assertion can be found, e.g., in~\cite{LST}.
\end{proof}

\begin{remark}\label{rem1}
The sufficient condition for Fourier multipliers given in assertion b)  is one of the simplest and is rather rough.
For more advanced sufficient conditions for Fourier multipliers see, e.g.,~\cite[Ch. 5]{Gr}, \cite{LST}, \cite{K14}.
\end{remark}

Now, we are ready to present an analogue of Theorem~\ref{corMOD1DD--}.

\begin{theorem}\label{corMOD1DD++}
Let $1\le p\le\infty$, $1/p+1/q=1$, $s\in \N$, $\d\in (0,1/2)$, and $j\in \N$. Suppose that $\w\phi_j\in \mathcal{D}'$ and
$\phi_j \in \mathcal{T}_{M^j}$, $\phi_j$ and $\w\phi_j$ are given by~\eqref{pipi},
$\Phi, \w\Phi\in C^{s+d}(2\d \T^d)$ and $D^\alpha (1-{\w\Phi}\Phi)(0)=0$ for all $|\alpha|<s$.
Then, for any $f\in  B_{\w\vp_j, p}$, we have
\begin{equation*}
\begin{split}
\Vert f - Q_j(f,\phi_j,\w\phi_j) \Vert_p\le C\bigg(\Omega_s(f,M^{-j})_p+K_{\phi_j,q} \w E_{\frac\delta2 M^j}({{\w\phi_j}}*f)_p\bigg),
\end{split}
\end{equation*}
if, additionally, $\w\phi_j\in L_q$, then for any $f\in L_p$, we have
\begin{equation*}
  \Vert f - Q_j(f,\phi_j,\w\phi_j) \Vert_p\le C(1+K_{\phi_j,q}\Vert \w\phi_j \Vert_{\mathcal{L}_{q,j}})\Omega_s(f,M^{-j})_p,
\end{equation*}
where the constant $C$ does not depend on $f$ and $j$.
\end{theorem}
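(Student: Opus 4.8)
The plan is to deduce Theorem~\ref{corMOD1DD++} from Theorem~\ref{corMOD1DD--} by verifying that the hypotheses \eqref{zvezda1} and \eqref{zvezda1m} follow from the smoothness assumptions on $\Phi$ and $\w\Phi$. First I would write, for $k\in D(\d M^j)$, the Taylor expansion of the function $G:=1-\w\Phi\Phi$ on the cube $2\d\T^d$ around the origin. Since $G\in C^{s+d}(2\d\T^d)$ and $D^\alpha G(0)=0$ for all $|\alpha|<s$, Taylor's formula with integral remainder gives
\begin{equation*}
  G(\xi)=1-\w\Phi(\xi)\Phi(\xi)=\sum_{[\beta]=s}\xi^\beta\, g_\beta(\xi),\qquad \xi\in 2\d\T^d,
\end{equation*}
where each $g_\beta\in C^{d}(2\d\T^d)$ (the $s$-th order remainder of a $C^{s+d}$ function retains $d$ derivatives). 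Evaluating at $\xi=M^{-j}k$ (which lies in $\d\T^d\subset 2\d\T^d$ when $k\in D(\d M^j)$) and recalling $\h{\vp_j}(k)\h{\w\vp_j}(k)=\Phi(M^{-j}k)\w\Phi(M^{-j}k)$ by \eqref{pipi}, we obtain exactly \eqref{zvezda1} with
\begin{equation*}
  \Gamma_{j,s}(k)=g_\beta(M^{-j}k)\quad\text{(for the $\beta$-th summand)}.
\end{equation*}

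Next I would verify the multiplier condition \eqref{zvezda1m}. By Lemma~\ref{mult}~a), it suffices to produce, for each $\beta$ with $[\beta]=s$, a function $\mu_\beta\in \mathcal{M}_p(\R^d)$ that is continuous at the integer points and satisfies $\mu_\beta(M^{-j}k)=g_\beta(M^{-j}k)v_\d(M^{-j}k)$ for all $k$; then $\sup_j\Vert\{\Gamma_{j,s}(k)v_\d(M^{-j}k)\}_k\Vert_{\mathcal{M}_p}\le c(p,d)\sum_\beta\Vert\mu_\beta\Vert_{\mathcal{M}_p(\R^d)}<\infty$. The natural choice is $\mu_\beta=\w g_\beta\cdot v_\d$, where $\w g_\beta\in C_0^{d}(\R^d)$ is a compactly supported extension of $g_\beta$ from $\operatorname{supp}v_\d\subset[-3\d/8,3\d/8]^d\subset 2\d\T^d$ to all of $\R^d$ (possible since $g_\beta\in C^d$ on a neighborhood of $\operatorname{supp}v_\d$). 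The product $\mu_\beta$ is then in $C^{d}(\R^d)$ with compact support, hence in $W_s^d(\R^d)$ for the appropriate $s>1$ — or one checks directly $\mathcal{F}(\mu_\beta)\in L_1(\R^d)$ — so Lemma~\ref{mult}~b) yields $\mu_\beta\in\mathcal{M}_p(\R^d)$ for every $1\le p\le\infty$, with norm independent of $j$ since $\mu_\beta$ itself does not depend on $j$. Note also $v_\d(M^{-j}k)$ genuinely appears in \eqref{zvezda1m}, and we have reproduced it since $v_\d(M^{-j}k)=1$ on a neighborhood of the points where the Taylor identity is used and $v_\d$ is supported in $2\d\T^d$.

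With \eqref{zvezda1} and \eqref{zvezda1m} established, Theorem~\ref{corMOD1DD--} applies verbatim and gives both asserted inequalities, the second one under the extra hypothesis $\w\phi_j\in L_q$ exactly as in the cited theorem. I expect the main obstacle to be the bookkeeping in the Taylor argument: one must be careful that the remainder functions $g_\beta$ are defined and $C^d$ on an honest open neighborhood of $\operatorname{supp}v_\d$ (not merely at a point), that the periodicity identification $2\d\T^d$ versus the cube in $\R^d$ causes no conflict on that neighborhood, and that the extension $\w g_\beta$ can be taken compactly supported without destroying the $C^d$ regularity — all of which is routine but needs the condition $\d\in(0,1/2)$ to guarantee $\operatorname{supp}v_\d$ stays inside the fundamental domain. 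A secondary technical point is confirming that $C^d$ smoothness with compact support is enough for Lemma~\ref{mult}~b); if one prefers the cleaner Sobolev hypothesis $W_s^d$ with $s>1$, it is enough to assume a touch more smoothness, but $\mathcal{F}(\mu_\beta)\in L_1$ already follows from $\mu_\beta\in C^{d}_0$ by the standard estimate $|\mathcal{F}(\mu_\beta)(\xi)|\le C(1+|\xi|)^{-d}\cdot$(something integrable after one extra derivative), which is why the hypothesis is stated as $C^{s+d}$.
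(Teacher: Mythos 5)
Your proposal is correct and follows essentially the same route as the paper: Taylor's formula with integral remainder for $1-\w\Phi\Phi$ around the origin, a smooth cutoff to obtain compactly supported $C^d$ remainder functions, Lemma~\ref{mult} to verify~\eqref{zvezda1} and~\eqref{zvezda1m}, and then Theorem~\ref{corMOD1DD--}. The only slight wobble is your closing claim that $\mathcal{F}(\mu_\beta)\in L_1(\R^d)$ follows from the decay $(1+|\xi|)^{-d}$ alone (which is not integrable on $\R^d$); but this is harmless, since a compactly supported $C^d$ function lies in $W_s^d(\R^d)$ for $s>1$, so Lemma~\ref{mult}~b) applies exactly as you also note.
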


\begin{proof}
The proof easily follows from Theorem~\ref{corMOD1DD--} and Lemma~\ref{mult}. One only needs  to take into account that
using Taylor's formula near zero,  we have
$$
\Phi(\xi)\w\Phi(\xi)=1+\sum_{[\beta]=s}\frac{s}{\beta!} r^\beta \int_0^1 (1-t)^{s-1}D^\beta {\Phi}  {\w \Phi} (t\xi) {\rm d}t,\quad \beta \in \Z_+^d,\ [\beta]=s.
$$
Then,  denoting
$$
G_\beta(\xi)=\rho(\xi)\int_0^1 (1-t)^{s-1}D^\beta {\Phi}  {\w \Phi} (t\xi) {\rm d}t,
$$
where $\rho(\xi)\in C^\infty (\R^d)$, $\rho(\xi)=1$ for $\xi \in \d \T^d$ and $\rho(\xi)=0$ for $\xi\not\in 2\d \T^d$, and taking into account that $G_\beta \in C^d(\R^d)$, we have that by Lemma~\ref{mult}, conditions~\eqref{zvezda1} and~\eqref{zvezda1m} hold with
$\Gamma_{j,\beta}(k)=G_\beta(M^{-j}k)$.
\end{proof}

\begin{example}\label{e3}
Taking $\w\vp_j=m^j \chi_{M^{-j} [-\frac12,\frac12)^d}$ and $\vp_j=\mathscr{D}_{M^j}$, it is not difficult to see that   Theorem~\ref{corMOD1DD++} provides the following error estimate for the corresponding Kantorovich-type operator (cf.~\cite[Proposition~19]{KS3}):
\begin{equation}\label{e31}
 \bigg\Vert f- \frac1{m^j}\sum_{k\in D(M^j)}{\rm Avg}_{\s M^{-j}} f(M^{-j}k) \mathscr{D}_{M^j}(\cdot-M^{-j}k)\bigg\Vert_p\le C \kappa_{j,p}\Omega_2(f,M^{-j})_p,
\end{equation}
where $f\in L_p$, $1\le p\le \infty$, $\s\in (0,1]$, the constant $\kappa_{j,p}$ is given in~\eqref{Kp}, and  $C$ does not depend on $f$ and $j$.
\end{example}

We omit the formulations of the corresponding analogues of Theorems~\ref{thfr1} and~\ref{thfr1b} in terms of the smoothness properties of $\Phi$ and $\w\Phi$. Using Lemma~\ref{mult} and Remark~\ref{rem1}, one can directly and easily obtain appropriate statements.
Instead of this, we give several examples of applications of Theorems~\ref{thfr1} and~\ref{thfr1b} for some special quasi-interpolation operators.

First, we consider an estimate from below for the $L_p$-error of approximation by the quasi-interpolation operator from Example~\ref{e3}.

\begin{example}\label{e4}
Using Remark~\ref{rem1-} and Lemma~\ref{mult}, we obtain that for any
$f\in L_p$, $1<p<\infty$, $\s\in (0,1]$, and $j\in \N$
\begin{equation*}
  C\mathcal{K}_2^\Delta(f,M^{-j})_p\le \bigg\Vert f- \frac1{m^j}\sum_{k\in D(M^j)} {\rm Avg}_{\s M^{-j}}f(M^{-j}k) \mathscr{D}_{M^j}(\cdot-M^{-j}k)\bigg\Vert_p,
\end{equation*}
where $C$ does not depend on $f$ and $j$. Combining this estimate and inequality~\eqref{e31}, we derive that
\begin{equation*}
  \bigg\Vert f- \frac1{m^j}\sum_{k\in D(M^j)} {\rm Avg}_{\s M^{-j}} f(M^{-j}k) \mathscr{D}_{M^j}(\cdot-M^{-j}k)\bigg\Vert_p\asymp \Omega_2(f,M^{-j})_p.
\end{equation*}
In the last estimate, we took into account the fact that $\Omega_2(f,M^{-j})_p\le C\mathcal{K}_2^\Delta(f,M^{-j})_p$, which easily follows from
relation~\eqref{eqMOD1} and inequality $\Vert \Delta_h^2 g\Vert_{L_p(\T)}\le \Vert g'' \Vert_{L_p(\T)}$.
\end{example}

Our next example concerns quasi-projection operators that are generated by an average sampling instead of the exact samples of~$f$. Note that in the non-periodic case such operators are useful to reduce noise (see, e.g.,~\cite{ZWS}).
However, we will show that some of these operators cannot provide as "good" an approximation order as in the case of the classical interpolation operator, cf. Example~\ref{e1}.

\begin{example}\label{e5}
Let $d=1$ and $M\in \N$, $M\ge 2$. For $f\in B$,  we denote
$$
\l_j f(x)=\frac14 f(x-M^{-j-1})+\frac12 f(x)+\frac14 f(x+M^{-j-1})\sim \sum_{\ell\in \Z} \h{\w\vp_j}(\ell)\h f(\ell) {\rm e}^{2\pi {\rm i}\ell x},
$$
where $\h{\w\vp_j}(\ell)=\cos^2 (2\pi M^{-j-1}\ell)$. Using Theorems~\ref{thfr1} and~\ref{thfr1b} and Lemma~\ref{mult} for $\w\vp_j$ and $\vp_j=\mathcal{D}_{M^j}$, taking also into account Remark~\ref{rem2}, we derive
\begin{equation*}
\begin{split}
     C_1\omega_2(f,M^{-j})_p\le \bigg\Vert f- \frac1{M^j}\sum_{k\in D(M^j)} \l_j  &f(M^{-j}k) \mathscr{D}_{M^j}(\cdot-M^{-j}k)\bigg\Vert_p\\
&\le C_2\(\omega_2(f,M^{-j})_p+\w E_{M^j}(\l_j f)_p\),
\end{split}
\end{equation*}
where $1<p<\infty$ and $C_1$, $C_2$ are some positive constants that do not depend on $f$ and $j$.
\end{example}

Finally, we present two examples of the error estimates, in which we essentially use  the fractional smoothness of a function $f$.
For our purposes, we consider the following Riesz kernel
$$
\mathscr{R}_{s,M^j}^\gamma(x)=\sum_k (1-|c_d M^{-j}k|^s)_+^\g {\rm e}^{2\pi {\rm i}(k,x)},\quad s,\g>0\quad\text{and}\quad c_d=4 d^{1/2}.
$$

\begin{example}\label{e6}
Let $1\le p\le \infty$, $s>0$, $\g>\frac{d-1}{2}$, and $j\in \N$.
  \begin{itemize}
    \item[1)] For any $f\in  B$ ($f\in C(\T^d)$ in the case $p=\infty$), we have
\begin{equation}\label{ee61}
\begin{split}
C_1\mathcal{K}_s^\Delta(f,M^{-j})_p\le \bigg\Vert f- \frac1{m^j}\sum_{k\in D(M^j)} &f(M^{-j}k) \mathscr{R}_{s,M^j}^\gamma (\cdot-M^{-j}k)\bigg\Vert_p\\
&\le C_2\bigg(\mathcal{K}_s^\Delta(f,M^{-j})_p+\w E_{c M^j}(f)_p\bigg),
\end{split}
\end{equation}
where $c$, $C_1$ and $C_2$ are some positive constants that do not depend on $f$ and $j$
    \item[2)] For any $f\in  L_p$, $s\in (0,2]$, and $\s\in (0,1]$, we have
\begin{equation}\label{ee62}
\begin{split}
\bigg\Vert f- \frac1{m^j}\sum_{k\in D(M^j)} {\rm Avg}_{\s M^{-j}} f(M^{-j}k) &\mathscr{R}_{s,M^j}^\gamma (\cdot-M^{-j}k)\bigg\Vert_p\asymp\mathcal{K}_s^\Delta(f,M^{-j})_p,
\end{split}
\end{equation}
where $\asymp$ is a two-sided inequality with positive constants that do not depend on $f$ and $j$.
  \end{itemize}
\end{example}

The proof of inequalities in~\eqref{ee61} follows from Theorems~\ref{thfr1} and~\ref{thfr1b}, Lemma~\ref{mult}, and the fact that with an appropriate parameter $\d \in (0,1/2)$, the Fourier transforms of the functions
$$
g_1(\xi)=\frac{|\xi|^s v_{1/\d}(\xi)}{1-(1-|c_d \xi|^s)_+^\g}\quad\text{and}\quad g_2(\xi)=\frac{1-(1-|c_d \xi|^s)_+^\g v_{\d}(\xi)}{|\xi|^s}
$$
belong to $L_1(\R^d)$ (see, e.g.,~\cite{R10}, see also the proof of Theorem~2 in~\cite{K12}).

The proof of~\eqref{ee62} is similar. In this case, one only needs to investigate, by analogy with the previous case, the following two functions
$$
g_2(\xi)=\frac{|\xi|^s v_{1/\d}(\xi)}{1-\w\Phi(\xi)(1-|c_d \xi|^s)_+^\g}\quad\text{and}\quad g_3(\xi)=\frac{1-\w\Phi(\xi)(1-|c_d \xi|^s)_+^\g v_{\d}(\xi)}{|\xi|^s},
$$
where $\w\Phi(\xi)= \prod_{\ell=1}^d \frac{\sin \pi \s \xi_\ell}{\pi\s \xi_\ell}$.

\subsection{Error estimates for functions from Besov-type spaces}

In the previous sections, we obtained error estimates for the quasi-interpolation operators $Q_j(f,\vp_j,\w\vp_j)$ under very general conditions on the distribution $\w\vp_j$. These estimates were given in terms of the best one-sided approximation $\w E_{\d M^j}({{\w\phi_j}}*f)_p$ and appropriate moduli of smoothness and $K$-functionals.
At the same time, we proved that in the case $\w\vp_j\in L_q$, the best one-sided approximation can be replaced by the classical best approximation $E_{\d M^j}(f)_p$. In this section, we will present other possibilities (not so restrictive as the assumption $\w\vp_j\in L_q$) to avoid exploitation of a quite specific quantity $\w E_{\d M^j}({{\w\phi_j}}*f)_p$.

First of all, we note that the best one-sided approximation can be estimated from above by means of the so-called $\tau$-modulus of smoothness, which is defined by
$$
\tau_s(g,u)_p:=\Vert \omega (g,\cdot,u) \Vert_p,\quad s\in \N,\quad u>0,
$$
where
$$
\omega (g,x,u)=\sup\{|\Delta_h^s g(t)|\,:\, t,t+sh\in D(su,x)\}, \quad x\in \R^d,
$$
$$
D(u,x)=\{y\in \R^d\,:\, |x-y|\le u/2\}.
$$
Recall (see~\cite{AP}) that for any $g\in B$, $s\in \N$, and the isotropic matrix $M=\l I_d$, $\l>1$ we have
\begin{equation}\label{tau1}
  \w E_{M^{j}}(g)_p\le C_{s,d} \tau_s (g,\l^{-j})_p,
\end{equation}
where the constant $C$ does not depend on $g$ and $j$.

For smooth functions, one can estimate one-sided best approximation as follows (see~\cite{Popov}):
if $f\in W_p^d \cap B$, then
\begin{equation}\label{tau2}
\w E_{M^{j}}(g)_p \le C_d\sum_{\a_j\in \{0,1\},\, [\a]>0}\l^{-j[\a]} E_{M^j}(D^\a g)_p.
\end{equation}

Thus, using~\eqref{tau1} or~\eqref{tau2} with $g={{\w\phi_j}}*f$, we can replace  $\w E_{\d M^j}({{\w\phi_j}}*f)_p$ in Theorems~\ref{cor1}--\ref{corMOD1DD++} by the corresponding approximation quantity from the right-hand sides of~\eqref{tau1} or~\eqref{tau2}.

Below, using a special Besov space, we present another approach to replace  $\w E_{\d M^j}({{\w\phi_j}}*f)_p$ in the corresponding results. Note that this approach is based on some ideas from~\cite{H} and~\cite{KKS2}. In contrast to formulas~\eqref{tau1} and~\eqref{tau2}, we avoid calculations of special $\tau$-moduli of smoothness and the consideration of functions from the Sobolev spaces.

We use the following anisotropic Besov spaces with respect to the matrix~$M$. We say that
$f\in \mathbb{B}_{p,q}^s (M)$, $1\le p\le\infty$, $0<q\le \infty$, and $s>0$, if $f\in L_p$ and
$$
\Vert f\Vert_{\mathbb{B}_{p,q}^s (M)}:=\Vert f\Vert_p+\(\sum_{\nu=1}^\infty m^{\frac sd q\nu} E_{M^\nu} (f)_p^q\)^{\frac 1q}<\infty.
$$

For our purposes, we need to specify the class of tempered distributions $\w\phi_j$. We say that a sequence of tempered distribution $\w\phi_j$ belongs to the class $\mathcal{D}_{N,j,p}'$ for some $N\ge 0$ and $1\le p\le \infty$ if there exists a positive constant $C$, which does not depend on $j$,  such that for any trigonometric polynomial $T_\nu \in \mathcal{T}_{M^\nu}$, one has
\begin{equation}\label{DefS}
  \Vert {\w\phi_j}*T_\nu \Vert_p \le C m^{\frac Nd (\nu-j)} \Vert T_\nu \Vert_p\quad\text{for all}\quad \nu\ge j,\quad j,\nu\in \N.
\end{equation}

As a simple example of $\w\phi_j \in \mathcal{D}_{N,j,p}'$, we can take the distribution corresponding to some differential operator. Namely, if we set
$$
\h{\w\phi_j} (\ell) = \sum_{[\beta]\le N} c_\beta (2\pi {\rm i} M^{-j} \ell)^{\beta}, \quad N\in \Z_+,
$$
where the numbers $c_\beta$ do not depend on $j$,  then by the well-known Bernstein inequality for trigonometric polynomials (see, e.g.,~\cite[p.~215]{Timan})
$$
\bigg\Vert \sum_{k=-n}^n ({\rm i}k)^r a_k {\rm e}^{2\pi {\rm i}kx} \bigg\Vert_{L_p(\T)} \le n^r \bigg\Vert \sum_{k=-n}^n a_k {\rm e}^{2\pi {\rm i}kx} \bigg\Vert_{L_p(\T)},
$$
we can easily derive that $\w\phi_j\in \mathcal{D}_{N,j,p}'$.

\begin{lemma}\label{bes1}
Let $1\le p\le\infty$, $M\ge 0$, $\d\in (0,1]$, $j\in \N$, and $\w\phi_j \in \mathcal{D}_{N,j,p}'$. Then, for any
$f\in \mathbb{B}_{p,1}^{N+d/p}(M)$,
\begin{equation}\label{bes2}
  \sum_{\ell\in\zd}
 {\h{\w\phi_j}(\ell)} \h f(\ell)    {\rm e}^{2 \pi {\rm i} (\ell,x)}
\end{equation}
is a Fourier series of a continuous function  ${{\w \vp_j}}*f$ on $\T^d$, i.e.,  $\mathbb{B}_{p,1}^{N+d/p}(M)\subset B_{\w\vp_j,p}$, and
\begin{equation}\label{bes3}
  \Vert \{{{\w \vp_j}}*f(M^{-j}k)-{{\w \vp_j}}*T_j(M^{-j}k)\}_{k}\Vert_{\ell_{p,M^j}}\le C m^{-(\frac1p+\frac Nd)j}\sum_{\nu=j}^\infty m^{(\frac 1p+\frac Nd)\nu} E_{\d M^\nu} (f)_p,
\end{equation}
where $T_j\in \mathcal{T}_{M^j}$ is such that $\Vert f-T_j\Vert\le c(d,p,\d) E_{\d M^j}(f)_p$ and the constant $C$ does not depend on $f$ and~$j$.
\end{lemma}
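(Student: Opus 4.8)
The plan is to decompose $f$ into a telescoping sum of polynomial blocks, apply the defining property \eqref{DefS} of $\w\phi_j\in\mathcal{D}'_{N,j,p}$ to each block, and then invoke the Marcinkiewicz--Zygmund inequality (Lemma~\ref{lemMZ}) to pass from the $L_p$-norm of $\w\phi_j*(\text{block})$ to the discrete $\ell_{p,M^j}$-norm of its samples. First I would fix, for each $\nu\ge j$, a near-best polynomial $T_\nu\in\mathcal{T}_{M^\nu}$ with $\|f-T_\nu\|_p\le c(d,p,\d)E_{\d M^\nu}(f)_p$, taking $T_j$ to be the one in the statement. Writing $f = T_j + \sum_{\nu=j}^\infty (T_{\nu+1}-T_\nu)$ (convergence in $L_p$ is guaranteed by $f\in\mathbb{B}_{p,1}^{N+d/p}(M)$, since the summability of $m^{(1/p+N/d)\nu}E_{\d M^\nu}(f)_p$ forces $E_{\d M^\nu}(f)_p\to 0$), one gets termwise
$$
\w\phi_j*f - \w\phi_j*T_j = \sum_{\nu=j}^\infty \w\phi_j*(T_{\nu+1}-T_\nu),
$$
and I must first check that this series converges to a continuous function, which will simultaneously establish $\mathbb{B}_{p,1}^{N+d/p}(M)\subset B_{\w\vp_j,p}$ as claimed.

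\textbf{The key steps.}
Step one: the block $g_\nu:=T_{\nu+1}-T_\nu$ lies in $\mathcal{T}_{M^{\nu+1}}$, so by \eqref{DefS} (with $\nu+1$ in place of $\nu$) we have $\|\w\phi_j*g_\nu\|_p\le C m^{\frac Nd(\nu+1-j)}\|g_\nu\|_p$, and $\|g_\nu\|_p\le \|f-T_{\nu+1}\|_p+\|f-T_\nu\|_p\le c\,E_{\d M^\nu}(f)_p$ (using monotonicity of best approximation together with the near-best choices). Step two: a Nikol'skii-type embedding, $\|h\|_\infty\le c\,m^{\frac{\mu}{p}}\|h\|_p$ for $h\in\mathcal{T}_{M^\mu}$, applied to $\w\phi_j*g_\nu\in\mathcal{T}_{M^{\nu+1}}$, converts the $L_p$-bound into a sup-norm bound $\|\w\phi_j*g_\nu\|_\infty\le C m^{(\frac1p+\frac Nd)(\nu+1)}m^{-\frac Nd j}E_{\d M^\nu}(f)_p$; summing in $\nu$ and using $f\in\mathbb{B}_{p,1}^{N+d/p}(M)$ shows the series converges uniformly, hence \eqref{bes2} is the Fourier series of a continuous function. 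Step three: for \eqref{bes3}, apply Lemma~\ref{lemMZ} to each $\w\phi_j*g_\nu\in\mathcal{T}_{M^{\nu}}\subset$ — here one must be slightly careful: $\w\phi_j*g_\nu$ has spectrum in $D(M^{\nu+1})$, not $D(M^j)$, so I would use the natural generalization of Lemma~\ref{lemMZ} that samples a polynomial of degree $M^{\nu+1}$ on the finer-than-necessary lattice $M^{-j}\Z^d$, which costs a factor $m^{(\nu+1-j)/p}$ (the standard oversampling Marcinkiewicz--Zygmund estimate). This yields
$$
\big\|\{\w\phi_j*g_\nu(M^{-j}k)\}_k\big\|_{\ell_{p,M^j}}\le c\, m^{(\nu+1-j)/p}\|\w\phi_j*g_\nu\|_p\le C m^{(\frac1p+\frac Nd)(\nu+1-j)}E_{\d M^\nu}(f)_p.
$$
Summing over $\nu\ge j$ via the triangle inequality in $\ell_{p,M^j}$ and factoring out $m^{-(\frac1p+\frac Nd)j}$ gives exactly \eqref{bes3}, absorbing the $m^{(\frac1p+\frac Nd)}$ from the shift $\nu\mapsto\nu+1$ into the constant.

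\textbf{Main obstacle.}
The delicate point is the sampling step: $\w\phi_j*f$ is sampled on the lattice $M^{-j}\Z^d$, but the polynomial block $\w\phi_j*g_\nu$ has degree governed by $M^{\nu+1}$, which is much larger than $M^j$ for $\nu>j$. So the sampling lattice is too coarse for a direct Marcinkiewicz--Zygmund bound on $g_\nu$, and I need the \emph{oversampling} form of the inequality, where a polynomial of high degree is sampled on a (comparatively) fine lattice — there the estimate picks up precisely the factor $m^{(\nu+1-j)/p}$ that is then killed by the geometric decay coming from the Besov assumption and the factor $m^{\frac Nd(\nu-j)}$ in \eqref{DefS}. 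Balancing these three competing powers of $m$ — the Nikol'skii/oversampling gain $m^{(\nu-j)/p}$, the $\mathcal{D}'_{N,j,p}$ loss $m^{\frac Nd(\nu-j)}$, and the Besov decay $m^{-(\frac1p+\frac Nd)\nu}E_{\d M^\nu}(f)_p$ — so that the exponent comes out exactly $m^{-(\frac1p+\frac Nd)j}\sum_\nu m^{(\frac1p+\frac Nd)\nu}E_{\d M^\nu}(f)_p$ is the one routine-but-careful computation I would want to get right; everything else is standard.
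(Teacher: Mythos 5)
Your proposal is correct and follows essentially the same route as the paper: the telescoping decomposition $f=T_j+\sum_{\nu\ge j}(T_{\nu+1}-T_\nu)$, the bound $\Vert\w\phi_j*(T_{\nu+1}-T_\nu)\Vert_p\le Cm^{\frac Nd(\nu+1-j)}E_{\d M^\nu}(f)_p$ from~\eqref{DefS}, Nikol'skii's inequality for the uniform convergence, and the Marcinkiewicz--Zygmund step on the coarse lattice $M^{-j}\Z^d$ costing exactly the factor $m^{(\nu+1-j)/p}$ (which the paper realizes by rescaling to the fine lattice $M^{-\nu}\Z^d$ and applying Lemma~\ref{lemMZ} there). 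The only quibbles are cosmetic: the lattice $M^{-j}\Z^d$ is \emph{coarser} than the degree of $\w\phi_j*g_\nu$ warrants (so this is undersampling, not ``oversampling,'' as you correctly say later in your ``main obstacle'' paragraph), and one should, as the paper does, also verify that the uniform limit actually has Fourier coefficients $\h{\w\phi_j}(\ell)\h f(\ell)$ --- a routine passage to the limit.
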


\begin{proof}
First, we show that the series in~\eqref{bes2} is a Fourier series of a certain continuous function, which we will denote by ${{\w \vp_j}}*f$.

Using Nikolskii's inequality of different metrics (see, e.g.,~\cite[p.~133]{Nik})
$$
\Vert T_\nu\Vert_\infty\le C_p m^\frac\nu p \Vert T_\nu\Vert_p
$$
and inequality~\eqref{DefS}, we derive
\begin{equation}\label{bes4}
  \begin{split}
     \sum_{\nu=1}^\infty \Vert {{\w \vp_j}}*T_{\nu+1}-{{\w \vp_j}}*T_{\nu} \Vert_\infty&\le C\sum_{\nu=1}^\infty m^\frac\nu p \Vert {{\w \vp_j}}*(T_{\nu+1}-T_{\nu}) \Vert_p\\
&\le C m^{-\frac Nd j} \sum_{\nu=1}^\infty m^{(\frac1p+\frac Nd)\nu} \Vert T_{\nu+1}-T_{\nu} \Vert_p\\
&\le C m^{-\frac Nd j} \sum_{\nu=1}^\infty m^{(\frac1p+\frac Nd)\nu} E_{\d M^\nu}(f)_p.
  \end{split}
\end{equation}
The estimates~\eqref{bes4} imply that the sequence $\{{\w \vp_j}*T_{\nu}\}_{\nu \in \N}$ is fundamental in $C(\T^d)$. We denote its limit  by ${\w \vp_j}*f$. It is clear that this limit does not depend on the choice of polynomials $T_\nu$. Thus, if $T_\nu$ is defined using the de la Vall\'ee Poussin means $V_\nu f$, we derive that $\{ \h{\w\vp_j}(\ell) \h f(\ell)  \}_{\ell}$ are the Fourier coefficients of the function  ${\w \vp_j}*f$ since for a fixed $\ell$ and a sufficiently large $\nu$
\begin{equation*}
\begin{split}
    |\h{{\w \vp_j}*f}(\ell)-\h{\w\vp_j}(\ell) \h f(\ell)|&=\bigg|\int_{\T^d} ({\w \vp_j}*f(x)-{\w \vp_j}*V_\nu f(x)) {\rm e}^{2\pi {\rm i}(\ell,x)}{\rm d}x\bigg|\\
&\le \Vert {\w \vp_j}*f-{\w \vp_j}*V_\nu f\Vert_\infty\to 0\quad\text{as}\quad \nu\to \infty.
\end{split}
\end{equation*}

Now, we prove inequality~\eqref{bes3}. Using the representation
$$
{\w \vp_j}*f -{\w \vp_j}*T_j=\sum_{\nu=j}^\infty {\w \vp_j}*(T_{\nu+1}-T_\nu)\quad\text{in}\quad C(\T^d),
$$
Lemma~\ref{lemMZ}, and~\eqref{bes4}, we obtain
\begin{equation*}
  \begin{split}
     \Vert \{{{\w \vp_j}}*f(M^{-j}k)-&{{\w \vp_j}}*T_j(M^{-j}k)\}_{k}\Vert_{\ell_{p,M^j}}\le \sum_{\nu=j}^\infty \Vert \{{{\w \vp_j}}*(T_{\nu+1}-T_\nu)(M^{-j}k)\}_{k}\Vert_{\ell_{p,M^j}}\\
&\le m^{-\frac jp}\sum_{\nu=j}^\infty m^{\frac \nu p}\Vert \{{{\w \vp_j}}*(T_{\nu+1}-T_\nu)(M^{-\nu}k)\}_{k}\Vert_{\ell_{p,M^\nu}}\\
&\le Cm^{-\frac jp}\sum_{\nu=j}^\infty m^{\frac \nu p}\Vert {{\w \vp_j}}*(T_{\nu+1}-T_\nu)\Vert_{p}\\
&\le Cm^{-\frac jp}\sum_{\nu=j}^\infty m^{\frac Nd(\nu+1-j)\frac \nu p}\Vert T_{\nu+1}-T_\nu\Vert_{p}\\
&\le Cm^{-(\frac1p+\frac Nd)j}\sum_{\nu=j}^\infty m^{(\frac1p+\frac Nd)}E_{\d M^\nu}(f)_{p},
  \end{split}
\end{equation*}
which proves the lemma.
\end{proof}

We have the following counterpart of Lemma~\ref{thKS}:

\begin{lemma}\label{thKS+B}
Let $1\le p\le\infty$, $1/p+1/q=1$, $\d\in (0,1]$,  and $j\in \N$. Suppose that $\w\phi_j\in \mathcal{D}_{N,j,p}'$ and
$\phi_j \in \mathcal{T}_{M^j}$.
Then, for any $f\in \mathbb{B}_{p,1}^{d/p+N}(M)$, we have
\begin{equation}\label{KS000B}
\begin{split}
     \Vert f - Q_j(f,\phi,\w\phi) \Vert_p\le C\(\Vert {\psi_j}*T_j\Vert_{p}+m^{-j(\frac1p+\frac Nd)}\sum_{\nu=j}^\infty m^{(\frac1p+\frac Nd)\nu} E_{\delta M^\nu}(f)_p\),
\end{split}
\end{equation}
where $\psi_j$ is given in \eqref{psi1}, $T_j\in \mathcal{T}_{M^j}$ is such that $\Vert f-T_j\Vert_p\le c(d,p,\d)E_{\d M^j}(f)_p$,
and the constant $C$ does not depend on $f$ and $j$.
\end{lemma}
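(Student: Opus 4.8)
The plan is to mimic the proof of Lemma~\ref{thKS} verbatim up to the decomposition \eqref{I123}, keeping the same $I_1,I_2,I_3$, and then replace the treatment of $I_3$ using the Besov-space machinery of Lemma~\ref{bes1} in place of the one-sided approximation argument. First I would fix, for $f\in\mathbb{B}_{p,1}^{d/p+N}(M)$ and the given $\d\in(0,1]$, a polynomial $T_j\in\mathcal{T}_{M^j}$ with $\Vert f-T_j\Vert_p\le c(d,p,\d)E_{\d M^j}(f)_p$; since $\w\phi_j\in\mathcal{D}_{N,j,p}'$, Lemma~\ref{bes1} applies and guarantees that ${{\w\phi_j}}*f$ is a well-defined continuous function, so $f\in B_{\w\vp_j,p}$ and $Q_j(f,\phi_j,\w\phi_j)$ makes sense. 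Then $I_1=\Vert f-T_j\Vert_p\le c\,E_{\d M^j}(f)_p$, which is absorbed into the sum on the right-hand side of \eqref{KS000B} (the $\nu=j$ term dominates $E_{\d M^j}(f)_p$ up to a constant), and $I_2=\Vert{\psi_j}*T_j\Vert_p$ exactly as in \eqref{I2}, via the identity \eqref{zvezda1ra}, which uses only $\phi_j\in\mathcal{T}_{M^j}$ and not the nature of $\w\phi_j$.

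For $I_3$, I would apply Lemma~\ref{lemKK1} to the coefficient sequence $a_k={{\w\phi_j}}*f(M^{-j}k)-{{\w\phi_j}}*T_j(M^{-j}k)$, giving
\begin{equation*}
  I_3\le C K_{\phi_j,q}\,\big\Vert\{{{\w\phi_j}}*f(M^{-j}k)-{{\w\phi_j}}*T_j(M^{-j}k)\}_k\big\Vert_{\ell_{p,M^j}},
\end{equation*}
and then invoke the key estimate \eqref{bes3} of Lemma~\ref{bes1} to bound this $\ell_{p,M^j}$-norm by
\begin{equation*}
  C m^{-(\frac1p+\frac Nd)j}\sum_{\nu=j}^\infty m^{(\frac1p+\frac Nd)\nu}E_{\d M^\nu}(f)_p.
\end{equation*}
Combining the three bounds for $I_1$, $I_2$, $I_3$ via \eqref{I123} yields \eqref{KS000B}. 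The only subtlety is that \eqref{KS000B} has no $K_{\phi_j,q}$ factor in front of the Besov sum, whereas the $I_3$-bound does carry it; I would remark that $K_{\phi_j,q}\ge c>0$ (indeed $K_{\phi_j,q}\ge|\h{\phi_j}(\nul)|$ or one simply notes $\Vert\phi_j*1\Vert_q=|\h{\phi_j}(\nul)|\le K_{\phi_j,q}$; in any case for the relevant examples $K_{\phi_j,q}\gtrsim1$), or — cleaner — that the statement as written should be read with $K_{\phi_j,q}$ multiplying that term, matching exactly the structure of Lemma~\ref{thKS}. Since this is the last displayed statement of the excerpt and no numerical sharpness is claimed, the constant bookkeeping is routine.

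**Main obstacle.** There is essentially no deep obstacle: the entire content has been pre-packaged into Lemma~\ref{bes1}, whose proof (already given) does the real work of showing that the telescoping series $\sum_{\nu\ge j}{\w\phi_j}*(T_{\nu+1}-T_\nu)$ converges in $C(\T^d)$ and estimating the resulting sampled $\ell_{p,M^j}$-norm via the Marcinkiewicz--Zygmund inequality (Lemma~\ref{lemMZ}), Nikolskii's inequality of different metrics, and the defining property \eqref{DefS} of $\mathcal{D}_{N,j,p}'$. The only point requiring a moment's care is verifying that $I_1$ is genuinely dominated by the Besov sum — this holds because the $\nu=j$ summand is $m^{(\frac1p+\frac Nd)j}E_{\d M^j}(f)_p$, so after multiplication by the prefactor $m^{-(\frac1p+\frac Nd)j}$ it reproduces $E_{\d M^j}(f)_p\gtrsim\Vert f-T_j\Vert_p/c$. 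Everything else is a transcription of the proof of Lemma~\ref{thKS} with \eqref{I3} swapped for the Lemma~\ref{bes1} bound.
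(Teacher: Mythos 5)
Your proof is correct and follows essentially the same route as the paper: the paper's proof likewise reuses the decomposition~\eqref{I123} and the identity~\eqref{I2} verbatim and only replaces the estimate of $I_3$ by the bound from Lemma~\ref{bes1}, arriving at exactly your inequality $I_3\le C K_{\phi_j,q}\, m^{-(\frac1p+\frac Nd)j}\sum_{\nu=j}^\infty m^{(\frac1p+\frac Nd)\nu}E_{\d M^\nu}(f)_p$. Your observation about the $K_{\phi_j,q}$ factor is also on point --- the paper's own intermediate estimate~\eqref{I3n} retains this factor, so \eqref{KS000B} should indeed be read with $K_{\phi_j,q}$ multiplying the Besov sum (consistent with inequality~\eqref{bbeell} in the subsequent Proposition); note only that your fallback remark $K_{\phi_j,q}\ge c>0$ would not by itself let one drop the factor, since that would require an upper, not a lower, bound.
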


\begin{proof}
The proof is similar to the one of Lemma~\ref{thKS}. The only difference consists in the estimate of the norm $I_3$ in inequality~\eqref{I3}.
In particular, using Lemma~\ref{bes1} and the first inequality in~\eqref{I3}, we derive that
\begin{equation}\label{I3n}
  \begin{split}
           I_3&\le C K_{\phi_j,q} \(\frac1{m^j} \sum_{k\in D(M^j)}|{{\w\phi_j}}*f(M^{-j} k)-{{\w\phi_j}}*T_j(M^{-j} k)|^p\)^\frac1p\\
&\le C K_{\phi_j,q} m^{-(\frac1p+\frac Nd)j}\sum_{\nu=1}^\infty m^{(\frac 1p+\frac Nd)\nu} E_{\d M^\nu} (f)_p.
   \end{split}
\end{equation}
Thus, combining~\eqref{I123}, \eqref{I2}, and~\eqref{I3n}, we prove the lemma.
\end{proof}

\begin{remark}
If in Lemma~\ref{thKS+B} we replace the condition $\w\phi_j \in \mathcal{D}_{0,j,\infty}'$ by
\begin{equation}\label{condinf}
  \Vert{\w\vp_j}*f\Vert_\infty \le C\Vert f\Vert_\infty,\quad\text{for all}\quad f\in B,\quad  j\in\N,
\end{equation}
then, for any $f\in C(\T^d)$, the error estimate~\eqref{KS000B} can be improved in the following way
$$
\Vert f - Q_j(f,\phi_j,\w\phi_j) \Vert_\infty\le C \(\Vert {\psi_j}*T_j\Vert_{\infty}+K_{\phi_j,1} E_{\d M^j}(f)_\infty\).
$$
This estimate can be proved using the same argument as in the proof of Lemma~\ref{thKK}.

Note also that condition~\eqref{condinf} holds if, for example, $\w\phi_j$ is the periodic Dirac-delta function for all $j\in \N$.
\end{remark}

Finally, we note that combining Lemma~\ref{thKS+B} with Theorems~\ref{cor1}--\ref{thfr1b}, we easily obtain the following
error estimates given in terms of the unrestricted best approximation. Note also that inequality~\eqref{bbeell} below was earlier obtained in~\cite{KKS2}.

\begin{proposition}

Let $1\le p\le\infty$, $1/p+1/q=1$,  and $j\in \N$. Suppose that $\w\phi_j\in \mathcal{D}_{N,j,p}'$,
$\phi_j \in \mathcal{T}_{M^j}$, and $f\in \mathbb{B}_{p,1}^{d/p+N}(M)$.

\begin{itemize}
  \item[1)] If condition~\eqref{sc} holds for some $\d\in (0,1]$, then
\begin{equation}\label{bbeell}
  \Vert f - Q_j(f,\phi_j,\w\phi_j) \Vert_p\le CK_{\phi_j,q}m^{-j(\frac1p+\frac Nd)}\sum_{\nu=j}^\infty m^{(\frac1p+\frac Nd)\nu} E_{\delta M^\nu}(f)_p.
\end{equation}

\item[2)] If conditions~\eqref{zvezda1} and~\eqref{zvezda1m} hold for some $\d\in (0,1/2)$ and $s\in \N$, then
\begin{equation*}\label{KS000+NNNN++new}
\begin{split}
\Vert f - Q_j(f,\phi_j,\w\phi_j) \Vert_p\le C\bigg(\Omega_s(f,M^{-j})_p+K_{\phi_j,q} m^{-j(\frac1p+\frac Nd)}\sum_{\nu=j}^\infty m^{(\frac1p+\frac Nd)\nu} E_{\delta M^\nu}(f)_p\bigg).
\end{split}
\end{equation*}

\item[3)] If condition~\eqref{fr1} holds for some $\d\in (0,1/2)$ and $s>0$, then

\begin{equation*}\label{fr2new}
\begin{split}
\Vert f - Q_j(f,\phi_j,\w\phi_j) \Vert_p\le C\bigg(\mathcal{K}_s^\Delta(f,M^{-j})_p+K_{\phi_j,q} m^{-j(\frac1p+\frac Nd)}\sum_{\nu=j}^\infty m^{(\frac1p+\frac Nd)\nu} E_{\delta M^\nu}(f)_p\bigg).
\end{split}
\end{equation*}

\item[4)] If condition~\eqref{fr1b} holds for some $\d\in (0,1/2)$ and $s>0$, then

\begin{equation*}\label{fr2bsnew}
\begin{split}
\mathcal{K}_s^\Delta(f,M^{-j})_p\le C\bigg(\Vert f - Q_j(f,&\phi_j,\w\phi_j) \Vert_p\\
&+K_{\phi_j,q} m^{-j(\frac1p+\frac Nd)}\sum_{\nu=j}^\infty m^{(\frac1p+\frac Nd)\nu} E_{\frac12 M^\nu}(f)_p\bigg).
\end{split}
\end{equation*}
\end{itemize}

In the above four inequalities, the constant $C$ does not depend on $f$ and $j$.
\end{proposition}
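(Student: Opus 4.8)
The plan is to establish each of the four items by rerunning the proof of the corresponding result of Subsections~4.1--4.3 --- Theorem~\ref{cor1} for item~1), Theorem~\ref{corMOD1DD--} for item~2), Theorem~\ref{thfr1} for item~3), and Theorem~\ref{thfr1b} for item~4) --- while feeding in Lemma~\ref{thKS+B} (equivalently, its proof) at the single place where the best one-sided approximation $\w E_{\d M^j}({{\w\phi_j}}*f)_p$ enters, namely the estimate of the term $I_3$ in the decomposition~\eqref{I123}. First I would record that the hypotheses $\w\phi_j\in\mathcal{D}_{N,j,p}'$ and $f\in\mathbb{B}_{p,1}^{d/p+N}(M)$ guarantee by Lemma~\ref{bes1} that $f\in B_{\w\vp_j,p}$, so that all the quantities appearing in Theorems~\ref{cor1}--\ref{thfr1b} are well defined; moreover, the same lemma yields, in place of the bound $I_3\le CK_{\phi_j,q}\big(\w E_{M^j}({{\w\phi_j}}*f)_p+\Vert{{\w\phi_j}}*(f-T_j)\Vert_p\big)$ from~\eqref{I3}, the estimate~\eqref{I3n}, i.e.
\begin{equation*}
  I_3\le CK_{\phi_j,q}\,m^{-j(\frac1p+\frac Nd)}\sum_{\nu=j}^\infty m^{(\frac1p+\frac Nd)\nu}E_{\d M^\nu}(f)_p ,
\end{equation*}
valid for any $T_j\in\mathcal{T}_{M^j}$ with $\Vert f-T_j\Vert_p\le c(d,p,\d)E_{\d M^j}(f)_p$. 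Every other ingredient of those proofs --- the identity~\eqref{zvezda1ra} giving $I_2=\Vert\psi_j*T_j\Vert_p$, the Jackson bound for $I_1=\Vert f-T_j\Vert_p$, and, crucially, the Fourier-multiplier estimates~\eqref{KKper0} and~\eqref{fr4} bounding $\Vert\psi_j*V_{\d M^j}f\Vert_p$ by $\Omega_s(f,M^{-j})_p$ and by $\mathcal{K}_s^\Delta(f,M^{-j})_p$ respectively --- is left untouched.

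Carrying this out item by item: for item~1) I would take $T_j$ to be a near-best approximant to $f$ from $\mathcal{T}_{\d M^j}\subseteq\mathcal{T}_{\rho M^j}$; then~\eqref{sc} together with~\eqref{zvezda1ra} gives $\psi_j*T_j=0$, so $I_2=0$, while $I_1\le cE_{\d M^j}(f)_p$ is already the $\nu=j$ summand of the sum above, and~\eqref{I3n} closes the argument to produce~\eqref{bbeell}. For items~2) and~3) I would take $T_j=V_{\d M^j}f$ (with the $\d$ from conditions~\eqref{zvezda1}--\eqref{zvezda1m}, resp.~\eqref{fr1}), reuse verbatim~\eqref{KKper0}, resp.~\eqref{fr4}, to bound $I_2=\Vert\psi_j*V_{\d M^j}f\Vert_p$ by $C\Omega_s(f,M^{-j})_p$, resp.\ $C\mathcal{K}_s^\Delta(f,M^{-j})_p$; bound $I_1\le CE_{\frac\d2 M^j}(f)_p$ via~\eqref{ve} and absorb it into the same smoothness term using Lemma~\ref{lemJ} with~\eqref{eqMOD2} (resp.\ the inequality $\Omega_s\le C\mathcal{K}_s^\Delta$ for $s\in\N$); and bound $I_3$ by~\eqref{I3n}. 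For item~4) I would repeat the proof of Theorem~\ref{thfr1b} with $T_j=V_{M^j}f$, the only change being that in~\eqref{fr5b} the term $\Vert Q_j(f-T_j,\vp_j,\w\vp_j)\Vert_p$ is now estimated by means of Lemmas~\ref{lemKK1}, \ref{lemMZ}, and~\ref{bes1}, giving $CK_{\phi_j,q}m^{-j(\frac1p+\frac Nd)}\sum_{\nu\ge j}m^{(\frac1p+\frac Nd)\nu}E_{\frac12 M^\nu}(f)_p$; combining this with~\eqref{fr4b}, \eqref{fr5b}, \eqref{ve} and noting that $E_{\frac12 M^j}(f)_p$ is itself the $\nu=j$ term of that sum yields the asserted lower bound for $\mathcal{K}_s^\Delta(f,M^{-j})_p$.

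The step requiring genuine care --- and the one I would flag as the main obstacle --- is the matching of scales. Lemma~\ref{thKS+B} and Lemma~\ref{bes1} are phrased for a $T_j\in\mathcal{T}_{M^j}$ that is near-best at the level $\d M^j$, whereas items~2)--4) need $T_j=V_{\d M^j}f$ or $T_j=V_{M^j}f$, which are near-best only at the coarser levels $\tfrac\d2 M^j$, resp.\ $\tfrac12 M^j$. One therefore has to check that the telescoping estimate behind~\eqref{bes4}--\eqref{bes3}--\eqref{I3n} still goes through for these choices of $T_j$; it does, because replacing the near-best starting polynomial by $V_{\d M^j}f$ (whose spectrum already sits in $D(\tfrac{3\d}{4}M^j)$) merely shifts the dyadic index of the telescope by a bounded amount, so the resulting sum retains the form $m^{-j(\frac1p+\frac Nd)}\sum_{\nu\ge j}m^{(\frac1p+\frac Nd)\nu}E_{\d M^\nu}(f)_p$ (with $\d$ replaced by $\tfrac12$ in item~4)) up to absolute constants. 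What remains is the routine bookkeeping of tracking the exponent $\tfrac1p+\tfrac Nd$ and the powers of $m$ consistently through each concatenation; once these scale identifications are made explicit, each of the four inequalities follows directly from the estimates cited above.
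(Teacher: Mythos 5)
Your proposal is correct and follows exactly the route the paper itself indicates (the paper gives no detailed proof, stating only that the Proposition follows by combining Lemma~\ref{thKS+B} with Theorems~\ref{cor1}--\ref{thfr1b}): you substitute the Besov-type bound~\eqref{I3n} for the one-sided-approximation bound of $I_3$ in each of those proofs and keep everything else intact. Your explicit treatment of the scale mismatch between the near-best $T_j$ of Lemma~\ref{bes1} and the de la Vall\'ee Poussin polynomials $V_{\d M^j}f$, $V_{M^j}f$ used in items~2)--4) is the right point to check, and your resolution of it is sound.
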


\end{document}